\title{On some identities in law involving exponential functionals of Brownian motion and Cauchy variable}
\author{Yuu Hariya\thanks{Mathematical Institute, 
Tohoku University, Aoba-ku, Sendai 980-8578, Japan. }}
\date{\empty}
\numberwithin{equation}{section}
\theoremstyle{plain}
\newtheorem{thm}{Theorem}[section]
\newtheorem{prop}{Proposition}[section]
\newtheorem{lem}{Lemma}[section]
\theoremstyle{definition}
\theoremstyle{remark}
\newtheorem{rem}{Remark}[section]
\newtheorem{exm}{Example}[section]
\begin{document}

\newcommand\ND{\newcommand}
\newcommand\RD{\renewcommand}

\ND\N{\mathbb{N}}
\ND\R{\mathbb{R}}
\ND\Q{\mathbb{Q}}
\ND\C{\mathbb{C}}

\ND\F{\mathcal{F}}

\ND\kp{\kappa}

\ND\ind{\boldsymbol{1}}

\ND\al{\alpha }
\ND\la{\lambda }
\ND\ve{\varepsilon}
\ND\Om{\Omega}

\ND\ga{\gamma }

\ND\lref[1]{Lemma~\ref{#1}}
\ND\tref[1]{Theorem~\ref{#1}}
\ND\pref[1]{Proposition~\ref{#1}}
\ND\sref[1]{Section~\ref{#1}}
\ND\ssref[1]{Subsection~\ref{#1}}
\ND\aref[1]{Appendix~\ref{#1}}
\ND\rref[1]{Remark~\ref{#1}} 
\ND\cref[1]{Corollary~\ref{#1}}
\ND\eref[1]{Example~\ref{#1}}
\ND\fref[1]{Fig.\ {#1} }
\ND\lsref[1]{Lemmas~\ref{#1}}
\ND\tsref[1]{Theorems~\ref{#1}}
\ND\dref[1]{Definition~\ref{#1}}
\ND\psref[1]{Propositions~\ref{#1}}
\ND\rsref[1]{Remarks~\ref{#1}}
\ND\sssref[1]{Subsections~\ref{#1}}

\ND\pr{\mathbb{P}}
\ND\ex{\mathbb{E}}
\ND\br{W}

\ND\eb[1]{e^{B_{#1}}}
\ND\ebm[1]{e^{-B_{#1}}}
\ND\hbe{\Hat{\beta}}
\ND\hB{\Hat{B}}
\ND\argsh{\mathrm{Argsh}\,}
\ND\zmu{z_{\mu}}
\ND\GIG[3]{\mathrm{GIG}(#1;#2,#3)}
\ND\gig[3]{I^{(#1)}_{#2,#3}}
\ND\argch{\mathrm{Argch}\,}

\ND\vp{\varphi}
\ND\eqd{\stackrel{(d)}{=}}
\ND\db[1]{B^{(#1)}}
\ND\da[1]{A^{(#1)}}

\ND\Ga{\Gamma}
\ND\calE{\mathcal{E}}
\ND\calD{\mathcal{D}}

\def\thefootnote{{}}

\maketitle 
\begin{abstract}
Let $B=\{ B_{t}\} _{t\ge 0}$ be a one-dimensional standard Brownian motion,  
to which we associate the exponential additive functional 
$A_{t}=\int _{0}^{t}e^{2B_{s}}ds,\,t\ge 0$. 
Starting from a simple observation of generalized inverse Gaussian 
distributions with particular sets of parameters, we show, with the help of 
a result by Matsumoto--Yor (2000), that for every $x\in \R $ and for 
every finite stopping time $\tau $ of the process 
$\{ \ebm{t}A_{t}\} _{t\ge 0}$, 
there holds the identity in law 
\begin{align*}
 &\left( 
 \eb{\tau}\!\sinh x+\beta (A_{\tau }), \, 
 C\eb{\tau}\!\cosh x+\hbe (A_{\tau }), \, 
 \ebm{\tau }\!A_{\tau }
 \right) \\
 &\stackrel{(d)}{=}
 \left( 
 \sinh (x+B_{\tau }), \, C\cosh (x+B_{\tau }), \, 
 \ebm{\tau }\!A_{\tau }
 \right) , 
\end{align*}
which extends an identity due to Bougerol (1983) in several aspects. 
Here $\beta =\{ \beta (t)\} _{t\ge 0}$ and $\hbe =\{ \hbe (t)\} _{t\ge 0}$ 
are one-dimensional standard Brownian motions, $C$ is a standard Cauchy 
variable, and $B$, $\beta $, $\hbe $ and $C$ are independent. 
Using an argument relevant to derivation of 
the above identity, we also present some invariance formulae 
for Cauchy variable involving an independent Rademacher 
variable. 
\footnote{E-mail: hariya@m.tohoku.ac.jp}
\footnote{{\itshape Key Words and Phrases}:~{Brownian motion}; {exponential functional}; {Bougerol's identity}; {Cauchy variable}; {generalized inverse Gaussian distribution}.}
\footnote{2010 {\itshape Mathematical Subject Classification}:~Primary~{60J65}; Secondary~{60J55},~{60E07}.}
\end{abstract}

\section{Introduction}\label{;intro}

Let $B=\{ B_{t}\} _{t\ge 0}$ be a one-dimensional Brownian motion 
starting from $0$ and set 
\begin{align*}
 A_{t}:=\int _{0}^{t}e^{2B_{s}}ds, \quad t\ge 0. 
\end{align*}
This additive functional appears as the quadratic variation process 
of a geometric Brownian motion $\eb{t},\,t\ge 0$, and these exponential 
functionals of Brownian motion have importance in 
a number of fields such as option pricing in mathematical finance 
(see, e.g., \cite{gem}), diffusion processes in random environments 
(\cite{kt} and \cite{cmy} among others), stochastic analysis of 
Laplacians on hyperbolic spaces (see \cite[Sections~7.4 and 7.5]{mt} 
and references therein) and so on. There have been extensive studies 
made on these functionals (see the monograph \cite{yorm} by Yor 
and detailed surveys \cite{mySI, mySII} by Matsumoto and Yor) and 
various kinds of equalities and identities that give us deep 
understanding of their laws have been found, among which Bougerol's 
celebrated identity (\cite{bou}) states that for every fixed $t>0$, 
\begin{align}
 \beta (A_{t})&\eqd \sinh B_{t}, \label{;boug1}
\intertext{or more generally, for every fixed $t>0$ and $x\in \R $,}
 \eb{t}\sinh x+\beta (A_{t})&\eqd \sinh (x+B_{t}), \label{;boug2}
\end{align}
where $\beta =\{ \beta (t)\} _{t\ge 0}$ is a Brownian motion independent 
of $B$; unless otherwise stated, 
any Brownian motion that appears in this paper is one-dimensional and 
standard, namely starting from $0$. The former identity \eqref{;boug1} is 
particularly useful in deriving an explicit expression for the Mellin 
transform of the law of $A_{t}$. 

To see that the latter holds, one may follow the inventive reasoning 
due to Alili and Dufresne for the case $x=0$ provided in 
\cite[Appendix]{cmy}. For a fixed $x\in \R $, 
we consider the process $Y^{x}=\{ Y^{x}_{t}\} _{t\ge 0}$ given by 
\begin{align}\label{;defY}
 Y^{x}_{t}=\ebm{t}\sinh x +\ebm{t}\!\int _{0}^{t}e^{B_{s}}\,d\br _{s},\quad 
 t\ge 0, 
\end{align}
where $\br =\{ \br _{t}\} _{t\ge 0}$ is a Brownian motion independent of 
$B$. By defining a Brownian motion $\beta ^{x}=\{ \beta ^{x}_{t}\} _{t\ge 0}$ 
in such a way that 
\begin{align}\label{;defbex}
 \beta ^{x}_{t}=\int _{0}^{t}\frac{-Y^{x}_{s}\,dB_{s}+d\br _{s}}
 {\sqrt{1+(Y^{x}_{s})^{2}}}, 
\end{align}
It\^o's formula entails that $Y^{x}$ satisfies the following 
stochastic differential equation (SDE): 
\begin{align*}
 dY^{x}_{t}=\sqrt{1+(Y^{x}_{t})^{2}}\,d\beta ^{x}_{t}+\frac{1}{2}Y^{x}_{t}\,dt,
 \quad Y^{x}_{0}=\sinh x, 
\end{align*}
which is uniquely solved as 
\begin{align}\label{;expliy}
 Y^{x}_{t}=\sinh \left( x+\beta ^{x}_{t}\right) ,\quad t\ge 0. 
\end{align}
On the other hand, due to independence of $B$ and $\br $, we may 
express $Y^{x}$ as 
\begin{align}\label{;anothery}
 Y^{x}_{t}=\ebm{t}\sinh x+\ebm{t}\beta (A_{t}),\quad t\ge 0, 
\end{align}
with $\beta =\{ \beta (t)\} _{t\ge 0}$ another Brownian motion independent of 
$B$. Therefore for every fixed $t>0$, 
\begin{align}
 Y^{x}_{t}&\eqd \ebm{t}\sinh x+\beta (e^{-2B_{t}}\!A_{t}) \notag \\
 &\eqd \eb{t}\sinh x+\beta (A_{t}), \label{;ideny}
\end{align}
where the first line is due to the scaling property of Brownian motion 
and the second follows from the identity in law 
\begin{align}\label{;2-dim}
 \left( \ebm{t},\,e^{-2B_{t}}\!A_{t}\right) 
 \eqd \left( \eb{t},\,A_{t}\right) 
\end{align}
thanks to the time reversal: 
$\{ B_{t}-B_{t-s}\} _{0\le s\le t}\eqd 
\{ B_{s}\} _{0\le s\le t}$. Comparing \eqref{;ideny} and \eqref{;expliy}  
leads to \eqref{;boug2}. For further details as well as recent progress in 
the research on Bougerol's identity such as extensions to other 
processes, we refer the reader to the survey \cite{vak} by Vakeroudis; 
for a matrix-valued extension of the identity, see \cite{tas}. 

Following the notation in a series of papers 
\cite{myPI,myPII,my2003,mySII} by Matsumoto--Yor, 
we denote by $Z=\{ Z_{t}\} _{t\ge 0}$ the process defined by 
\begin{align*}
 Z_{t}:=\ebm{t}A_{t}. 
\end{align*}
As studied in detail in \cite{myPI,myPII}, this process is 
a diffusion process in its own natural filtration, 
which will be recalled in \rref{;rpparti} below. 
In the sequel we also denote by $C$ a standard Cauchy 
variable whose probability density is 
$\{ \pi (1+x^{2})\} ^{-1},\,x\in \R $. 
Given a real-valued process $X=\{ X_{t}\} _{t\ge 0}$ 
and a point $a\in \R $, we denote by $\tau _{a}(X)$ the first 
hitting time of $X$ to $a$: 
\begin{align*}
 \tau _{a}(X):=\inf \{ t\ge 0;\,X_{t}=a\}  
\end{align*}
with convention that  $\tau _{a}(X)=\infty $ when 
$\{ \} =\emptyset $. One of the objectives of this paper is 
to show that Bougerol's identity 
may be extended in the following manner: in the statement below, 
three processes 
$\beta =\{ \beta (t)\} _{t\ge 0}$, $\hbe =\{ \hbe (t)\} _{t\ge 0}$ and 
$\hB =\{ \hB _{t}\} _{t\ge 0}$ denote Brownian motions. 

\begin{thm}\label{;tm1}
 Fix $x\in \R $. For any stopping time $\tau $ of the process $Z$ 
 such that $0<\tau <\infty $ a.s., we have 
 \begin{equation}\label{;eqtm1}
  \begin{split}
  &\left( 
  \eb{\tau}\!\sinh x+\beta (A_{\tau }), \, 
  C\eb{\tau}\!\cosh x+\hbe (A_{\tau }), \, 
  Z_{\tau }
  \right) \\
  &\eqd 
  \left( 
  \sinh (x+B_{\tau }), \, C\cosh (x+B_{\tau }), \, 
  Z_{\tau }
  \right) , 
  \end{split}
 \end{equation}
 or equivalently, 
 \begin{equation}\label{;eqtm1d}
 \begin{split}
  &\left( 
  \eb{\tau}\!\sinh x+\beta (A_{\tau }), \, 
  \tau _{\eb{\tau }\!\cosh x}(\hB )+A_{\tau }, \, 
  Z_{\tau }
  \right) \\
  &\eqd 
  \left( 
  \sinh (x+B_{\tau }), \, \tau _{\cosh (x+B_{\tau })}(\hB ), \, 
  Z_{\tau }
  \right) , 
  \end{split}
 \end{equation}
 where in \eqref{;eqtm1}, $B$, $\beta $, $\hbe $ and $C$ (resp.~$B$ 
 and $C$) are independent on the left-(resp. right-)hand side while 
 in \eqref{;eqtm1d}, $B$, $\beta $ and $\hB $ (resp.~$B$ and $\hB $) 
 are independent on the left-(resp.~right-)hand side. 
\end{thm}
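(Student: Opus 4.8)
\medskip
\noindent\emph{Proof plan.}

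The plan is to prove \eqref{;eqtm1} in two steps — first as an equality of conditional laws given $\F^{Z}_{t}$ at a deterministic time $t$, then for an arbitrary stopping time of $Z$ by a dyadic approximation — and to obtain the equivalence with \eqref{;eqtm1d} separately. Fixing $t>0$ and writing $L_{t},R_{t}$ for the left- and right-hand sides of \eqref{;eqtm1} with $\tau $ replaced by $t$, I would rely on three facts: $A_{t}=\eb{t}Z_{t}$; $\beta $, $\hbe $ and $C$ are independent of $\F^{B}_{t}\supseteq \F^{Z}_{t}\vee \sigma (\eb{t})$; and, by the Matsumoto--Yor result recalled in \rref{;rpparti}, the conditional law of $\eb{t}$ given $\F^{Z}_{t}$ is a generalized inverse Gaussian law $\ga_{Z_{t}}$ depending only on $Z_{t}$. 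Combined with Brownian scaling (used to replace $\beta (vz),\hbe (vz)$ by $\sqrt{vz}\,N_{1},\sqrt{vz}\,N_{2}$ with $N_{1},N_{2}$ independent standard Gaussians), these show that the conditional laws of $L_{t}$ and of $R_{t}$ given $\F^{Z}_{t}$ are both functions of $Z_{t}$ alone, and reduce their equality to the purely distributional identity
\begin{equation}\label{;plan1}
 \left( V\sinh x+\sqrt{Vz}\,N_{1},\ CV\cosh x+\sqrt{Vz}\,N_{2}\right) \eqd \left( \sinh (x+\log V'),\ C'\cosh (x+\log V')\right) ,
\end{equation}
to be verified for every $z>0$; here $V,V'$ have the generalized inverse Gaussian law $\ga_{z}$, $C,C'$ are standard Cauchy, and on each side all displayed variables are independent. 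This is where the observation on generalized inverse Gaussian distributions enters: \eqref{;plan1} is an identity among generalized inverse Gaussian, Gaussian and Cauchy variables, which I would establish by a direct computation of the laws involved once the precise form of $\ga_{z}$ has been read off from \rref{;rpparti}. I expect this verification to be the main obstacle; the remaining steps are routine.

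Granting \eqref{;plan1}, one obtains, for every bounded continuous $g\colon \R ^{3}\to \R $,
\begin{equation}\label{;plan2}
 \ex [g(L_{t})\mid \F^{Z}_{t}]=\Psi _{g}(Z_{t})=\ex [g(R_{t})\mid \F^{Z}_{t}]\qquad (t>0),
\end{equation}
with one and the same bounded continuous function $\Psi _{g}$, whose continuity follows from the weak continuity of $z\mapsto \ga_{z}$. To handle a general stopping time $\tau $ of $Z$ with $0<\tau <\infty $ a.s., I would first take $\tau $ with values in $\{ k2^{-n};\,k\ge 1\} $: decomposing over the events $\{ \tau =k2^{-n}\} \in \F^{Z}_{k2^{-n}}$ and applying \eqref{;plan2} at each $t=k2^{-n}$ gives $\ex [g(L_{\tau })]=\ex [\Psi _{g}(Z_{\tau })]=\ex [g(R_{\tau })]$, where $L_{\tau },R_{\tau }$ now denote the two sides of \eqref{;eqtm1}. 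For general $\tau $ I would approximate by $\tau _{n}:=2^{-n}\lceil 2^{n}\tau \rceil $; these are stopping times of $Z$ with $0<\tau _{n}<\infty $ a.s.\ and $\tau _{n}\to \tau $ a.s., so the previous case applies to each $\tau _{n}$, and letting $n\to \infty $ — using the a.s.\ convergence of $L_{\tau _{n}},R_{\tau _{n}},Z_{\tau _{n}}$ (from the continuity of $B,A,Z,\beta ,\hbe $) and the boundedness of $g$ and $\Psi _{g}$ — yields $\ex [g(L_{\tau })]=\ex [\Psi _{g}(Z_{\tau })]=\ex [g(R_{\tau })]$. As $g$ is arbitrary, this is \eqref{;eqtm1}.

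For the equivalence of \eqref{;eqtm1} and \eqref{;eqtm1d}: a direct computation with Gaussian and Cauchy variables gives, for fixed $a>0$, $s\ge 0$ and a Brownian motion $\tilde\beta $ independent of $\hB $,
\begin{equation}\label{;plan3}
 \tilde\beta (\tau _{a}(\hB )+s)\eqd aC+\hbe (s),
\end{equation}
both sides being distributed as $\sqrt{a^{2}+sN^{2}}\,N'/N$ with $N,N'$ independent standard Gaussians; the case $s=0$ reads $\tilde\beta (\tau _{a}(\hB ))\eqd aC$. To pass from \eqref{;eqtm1d} to \eqref{;eqtm1} I would apply a fresh Brownian motion $\tilde\beta $ to the middle coordinate of both sides of \eqref{;eqtm1d}; using \eqref{;plan3} conditionally on $\sigma (B,\beta )$ — with $(a,s)=(\eb{\tau }\cosh x,A_{\tau })$ on the left-hand side, $(a,s)=(\cosh (x+B_{\tau }),0)$ on the right-hand side, and the $\sigma (B,\beta )$-measurable first and third coordinates kept fixed — this turns \eqref{;eqtm1d} into \eqref{;eqtm1}. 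Conversely, the conditional law, given the first and third coordinates, of the middle coordinate in \eqref{;eqtm1} is the image under the linear injective map $\la \mapsto \int _{0}^{\infty }\mathcal N(0,u)\,\la (du)$ of the corresponding conditional law in \eqref{;eqtm1d}, so the joint law in \eqref{;eqtm1} determines that in \eqref{;eqtm1d}. Hence \eqref{;eqtm1} and \eqref{;eqtm1d} are equivalent, which completes the proof of \tref{;tm1}.
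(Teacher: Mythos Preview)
Your reduction is the same as the paper's: your identity \eqref{;plan1} is exactly \pref{;pkey}\,(i) (set $V=e^{z_{\mu }}$, $z=1/\mu $), and once it is in hand, integrating against the law of $Z_{\tau }$ gives \eqref{;eqtm1}. You differ from the paper in two places. First, the paper invokes \pref{;pparti} directly at the stopping time $\tau $: the Matsumoto--Yor statement already covers stopping times of $Z$ (via the strong Markov property of the diffusion $Z$, cf.\ \rref{;rpparti}\,(1)), so the conditional law of $\eb{\tau }$ given $\F^{Z}_{\tau }$ is $\ga _{Z_{\tau }}$ without any approximation; your detour through deterministic $t$ and $\tau _{n}\downarrow \tau $ is correct but unnecessary. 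Second---and this is the substantive point---you defer the proof of \eqref{;plan1} to ``a direct computation of the laws involved'', whereas this computation is precisely the content of the paper: \pref{;phinge} rewrites $\int _{\R }e^{-\mu \cosh y}f(\sinh y)\,dy$ as an integral of $\ex [f(\beta (v))]$ against the $\GIG{0}{1}{\mu }$ density, and \lref{;lkey} iterates this after a translation $y\mapsto y-x$ to produce the two-parameter characteristic-function identity that is \eqref{;plan1}. Without that observation the ``direct computation'' is not at all routine, so your plan is structurally sound but the real work---the step you yourself flag as the main obstacle---is still ahead of you. Your handling of the equivalence \eqref{;eqtm1}$\Leftrightarrow $\eqref{;eqtm1d} is essentially the paper's: your \eqref{;plan3} is the relation $\hbe (\tau _{a}(\hB ))\eqd aC$ used in the proof of \pref{;pkey}, and the injectivity of $\la \mapsto \int \mathcal N(0,u)\,\la (du)$ is what the paper exploits when it passes from joint Fourier transforms to joint Fourier--Laplace transforms.
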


\begin{rem}\label{;rstop}
\thetag{1} Once \tref{;tm1} is established, then its extension to 
the case where $\tau $ is only assumed to be finite a.s.\ is 
straightforward thanks to sample path continuity of 
$B$, $\beta $ and $\hbe $; indeed, if $\tau $ is such that 
$\tau <\infty $ a.s., then by \tref{;tm1}, the identity \eqref{;eqtm1} 
holds with $\tau $ replaced by $\max \{ \tau, \delta \} $ 
for any $\delta >0$ and the above-mentioned extension follows
by letting $\delta \to 0$. The same remark 
stands as to the identity \eqref{;eqtm1d} by noting the fact that 
$\tau _{a}(B)\eqd a^{2}\tau _{1}(B)$ for any $a\in \R $ because of the 
scaling property of Brownian motion (see also \eqref{;idenst} below). 

\noindent 
\thetag{2} It should be noted that the natural filtration of 
the process $Z$ is strictly contained in that of the original 
Brownian motion $B$ (see \cite[Theorem~1.6]{myPI}); therefore it is 
true that $\tau $ in the statement of the theorem is a stopping time 
of $B$ but the converse is not true. 

\noindent 
\thetag{3} It would be interesting to note that taking $x=0$ in 
\eqref{;eqtm1}, we have in particular 
\begin{align}\label{;eqrstop1}
 \frac{\beta (A_{t})}{Z_{t}}\eqd \frac{\sinh B_{t}}{Z_{t}}
\end{align}
for every fixed $t>0$, from which it follows that 
\begin{align}\label{;eqrstop2}
 \beta \left( \frac{1}{A_{t}}\right) 
 \eqd \frac{1}{2}\left( 
 \frac{e^{2B_{t}}}{A_{t}}-\frac{1}{A_{t}}
 \right) . 
\end{align}
Indeed, thanks to the scaling property of Brownian motion and 
\eqref{;2-dim}, the left-hand side of \eqref{;eqrstop1} 
is identical in law with 
\begin{align*}
 \beta \left( \frac{e^{2B_{t}}}{A_{t}^{2}}\cdot A_{t}\right) 
 \eqd \beta \left( \frac{1}{A_{t}}\right) . 
\end{align*}
It is informative to mention that in view of \eqref{;2-dim}, the right-hand 
side of \eqref{;eqrstop2} does give a symmetric random variable: 
\begin{align*}
 -\left( 
 \frac{e^{2B_{t}}}{A_{t}}-\frac{1}{A_{t}}
 \right) \eqd 
 \frac{e^{2B_{t}}}{A_{t}}-\frac{1}{A_{t}}. 
\end{align*}
Similarly to \eqref{;eqrstop2}, it also holds that for every fixed $t>0$, 
\begin{align}\label{;eqrstop3}
 \beta (\eb{t})\eqd 
 \frac{1}{2}\left( 
 \sqrt{\frac{e^{3B_{t}}}{A_{t}}}
 -\sqrt{\frac{\ebm{t}}{A_{t}}}
 \right) 
\end{align}
if we consider $\bigl( 1/\sqrt{Z_{t}}\bigr) \sinh B_{t}$. 
\end{rem}

We also prove another extension of Bougerol's identity: given a 
real-valued process $X=\{ X_{t}\} _{t\ge 0}$ and $\mu \in \R $, 
we denote $X^{(\mu )}=\{ X^{(\mu )}_{t}:=X_{t}+\mu t\} _{t\ge 0}$. 

\begin{thm}\label{;tm2}
 Fix $x\in \R $. For any stopping time $\tau $ of the process $Z$ such 
 that $0<\tau <\infty $ a.s., it holds that 
 \begin{equation}\label{;eqtm2}
  \begin{split}
  &\left( \eb{\tau}\!\sinh x+\beta (A_{\tau }),\,\eb{\tau },\,Z_{\tau }\right) \\
  &\eqd 
  \left( 
  \sinh (x+B_{\tau }),\,
  \tau _{\cosh (x+B_{\tau })}(\hB ^{(\cosh x/Z_{\tau })})/Z_{\tau },
  \,Z_{\tau }
  \right) , 
  \end{split}
 \end{equation}
 where both $\beta =\{ \beta (t)\} _{t\ge 0}$ and 
 $\hB =\{ \hB _{t}\} _{t\ge 0}$ are Brownian motions independent of 
 $B$. 
\end{thm}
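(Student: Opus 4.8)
The plan is to argue conditionally on the natural filtration of $Z$ up to $\tau $, in the spirit of the proof of \tref{;tm1}. Since $Z_{\tau }$ appears as the third coordinate on both sides of \eqref{;eqtm2} and is a functional of $B$ alone, while $\beta $ and $\hB $ are independent of $B$, it suffices to show that, for almost every value $\mathsf{z}$ of $Z_{\tau }$, the first two coordinates on the two sides of \eqref{;eqtm2}, conditioned on $Z_{\tau }=\mathsf{z}$, have the same joint law. The ingredient that makes this possible---the same one behind \tref{;tm1}---is the Matsumoto--Yor description of the conditional law of $A_{\tau }$: since $\tau $ is a stopping time of the diffusion $Z$, the conditional law of $A_{\tau }$ given the natural filtration of $Z$ at time $\tau $ depends only on $Z_{\tau }$, and, conditionally on $Z_{\tau }=\mathsf{z}$, the variable $\eb{\tau }=A_{\tau }/Z_{\tau }$ has the generalized inverse Gaussian law of index $0$ with density proportional to $w^{-1}\exp \{ -(w+1/w)/(2\mathsf{z})\} $ on $(0,\infty )$.

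Granting this, the verification reduces to a short density computation. On the left-hand side, conditionally on $Z_{\tau }=\mathsf{z}$, put $W=\eb{\tau }$, so that $A_{\tau }=\mathsf{z}W$ and, given $W$, the variable $\beta (A_{\tau })$ is centred Gaussian with variance $\mathsf{z}W$; writing $\beta (A_{\tau })=\sqrt{\mathsf{z}W}\,N$ with $N$ standard normal independent of $W$, the pair $\left( \eb{\tau }\sinh x+\beta (A_{\tau }),\,\eb{\tau }\right) =\left( W\sinh x+\sqrt{\mathsf{z}W}\,N,\,W\right) $ has, at $(u,w)\in \R \times (0,\infty )$, joint density equal---after expanding $(u-w\sinh x)^{2}$ and using $1+\sinh ^{2}x=\cosh ^{2}x$---to a constant depending only on $\mathsf{z}$ times
\begin{equation*}
 w^{-3/2}\exp \left\{ \frac{u\sinh x}{\mathsf{z}}-\frac{1+u^{2}}{2\mathsf{z}w}-\frac{\cosh ^{2}x}{2\mathsf{z}}\,w\right\} .
\end{equation*}
On the right-hand side, conditionally on $Z_{\tau }=\mathsf{z}$, the first coordinate $u=\sinh (x+B_{\tau })=\sinh (x+\log \eb{\tau })$ is a strictly increasing function of $\eb{\tau }$, hence determines $B_{\tau }$ and so $\cosh (x+B_{\tau })=\sqrt{1+u^{2}}$; since $\hB $ is independent, the second coordinate $\tau _{\sqrt{1+u^{2}}}\bigl( \hB ^{(\cosh x/\mathsf{z})}\bigr) /\mathsf{z}$ then has the (inverse Gaussian) first-passage density of $\hB ^{(\cosh x/\mathsf{z})}$ at level $\sqrt{1+u^{2}}$, rescaled by $1/\mathsf{z}$. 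Multiplying the density of $u$---the image of the generalized inverse Gaussian density above under $w\mapsto \sinh (x+\log w)$---by this conditional density, the factors $\sqrt{1+u^{2}}$ and $\exp \{ \pm \sqrt{1+u^{2}}\cosh x/\mathsf{z}\} $ cancel and one recovers exactly the displayed expression. Integrating this common conditional law against the law of $Z_{\tau }$ gives \eqref{;eqtm2}; as a consistency check, integrating out $u$ returns the generalized inverse Gaussian law for the second coordinate, in accordance with the fact that on the left-hand side that coordinate is $\eb{\tau }$, and this also accounts for the drift $\cosh x/Z_{\tau }$ and the normalisation by $Z_{\tau }$ appearing exactly as they do.

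The step demanding the most care is the identification on the right-hand side: recognising the left-hand joint density as matching a rescaled drifted-Brownian first-passage law forces the use of the generalized inverse Gaussian law of \emph{index $0$}, i.e.\ it uses that $B$ is driftless, together with the identity $\cosh ^{2}x=1+\sinh ^{2}x$. Beyond this, only routine manipulations of Gaussian and (generalized) inverse Gaussian densities are involved, and no probabilistic input is needed apart from the Matsumoto--Yor conditional law already used for \tref{;tm1} and the first-passage density of Brownian motion with positive drift. As in \tref{;tm1}, one must also justify carrying the conditional-law statement over from deterministic times to the stopping time $\tau $ of $Z$, and replacing conditioning on the whole natural filtration of $Z$ by conditioning on $\sigma (Z_{\tau })$, which is legitimate because the conditional law of $A_{\tau }$ depends on $Z_{\tau }$ only. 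Finally, the hitting-time form \eqref{;eqtm1d} of \tref{;tm1} could equally serve as the starting point, the passage to \eqref{;eqtm2} then being the Cameron--Martin tilting that turns the standard first-passage time in \eqref{;eqtm1d} into the inverse Gaussian law above.
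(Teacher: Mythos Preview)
Your argument is correct. It differs from the paper's route in the way the conditional identity (for fixed $Z_{\tau }=\mathsf{z}$, equivalently for fixed $\mu =1/\mathsf{z}$) is established. The paper packages that step as \pref{;pkey}\,(ii) and proves it by passing through the integral identity of \pref{;phinge}: one computes the mixed Fourier--Laplace functional $\ex \bigl[ f\bigl( e^{z_{\mu }}\sinh x+\beta (e^{z_{\mu }}/\mu )\bigr) e^{-\la ^{2}e^{z_{\mu }}/(2\mu )}\bigr] $, applies the Cameron--Martin formula and \pref{;phinge}, and then recognises the Laplace transform of $\tau _{\cosh (x+z_{\mu })}(\hB ^{(\mu \cosh x)})$ via \eqref{;lt}. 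You instead match the two joint densities directly, using only the explicit GIG(0) form of the Matsumoto--Yor conditional law and the inverse Gaussian first-passage density; the identities $1+\sinh ^{2}x=\cosh ^{2}x$ and the cancellation of the $\sqrt{1+u^{2}}$ and $\exp \{ \pm \sqrt{1+u^{2}}\cosh x/\mathsf{z}\} $ factors do all the work. Your approach is shorter and entirely elementary, requiring neither \pref{;phinge} nor any transform argument; the paper's approach, on the other hand, unifies the proofs of \tsref{;tm1} and \ref{;tm2} under a single lemma (\lref{;lkey}/\pref{;phinge}) and makes the structural link to \eqref{;eqtm1d} more transparent. Your closing remark about reaching \eqref{;eqtm2} from \eqref{;eqtm1d} by a Cameron--Martin tilt is correct in spirit but would still require the density computation you have already done (or an equivalent Girsanov argument conditional on $Z_{\tau }$), so it is not a genuine shortcut.
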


In Alili--Gruet \cite{ag}, it is shown that for every fixed $t>0$, 
\begin{align}\label{;agi}
 \left( \beta (A_{t}),\,B_{t}\right) 
 \eqd 
 \left( 
 (2Y-1)\phi \bigl( 
 B_{t},\sqrt{R_{t}^{2}+B_{t}^{2}}\,
 \bigr) ,\,B_{t}
 \right) , 
\end{align}
where on the right-hand side, $Y$ is an arcsine 
variable whose probability density function is 
$\bigl\{ \pi \sqrt{x(1-x)}\bigr\} ^{-1},\,0<x<1$, the function 
$\phi $ is defined by 
\begin{align*}
 \phi (x,z)=\sqrt{2e^{x}(\cosh z-\cosh x)}
\end{align*}
for two reals $x$ and $z$ fulfilling $z\ge |x|$, 
the process 
$R=\{ R_{t}\} _{t\ge 0}$ is a two-dimensional Bessel process 
starting from $0$, and three elements $B$, $Y$ and $R$ are independent; 
for the identity \eqref{;agi} and its proof, we also 
refer to \cite[Lemma~1]{myPJ}. 
In the case $x=0$, the identity \eqref{;eqtm2} in 
the above theorem complements \eqref{;agi} in the sense that 
it keeps the expressions of first coordinates the same as in 
Bougerol's  original identity \eqref{;boug1}. 

Apart from the proof of \tref{;tm2} given in the next section, 
in order to convince the reader of the validity of \eqref{;eqtm2}, 
we recall the well-known fact that for every 
$a,\mu \in \R $ with $a\mu \ge 0$ and for every $\la \in \R $, 
\begin{align}\label{;lt}
 \ex \!\left[ 
 \exp \left\{ 
 -\frac{\la ^{2}}{2}\tau _{a}(\db{\mu })
 \right\} 
 \right] 
 =\exp \left\{ 
 -\bigl( 
 \sqrt{\mu ^{2}+\la ^{2}}-\mu 
 \bigr) |a|
 \right\} 
\end{align}
(see, e.g., \cite[Exercise~3.5.10]{ks}, \cite[p.~301, Formula~2.0.1]{bs}), 
from which we easily deduce, by differentiating both sides with respect to 
$\la $, that when $\mu \neq 0$, 
\begin{align*}
 \ex [\tau _{a}(\db{\mu })]=\frac{a}{\mu }. 
\end{align*}
Thanks to this formula, we see that for every $x\in \R $ and $t>0$, 
\begin{align*}
 \ex \!\left[ 
 \tau _{\cosh (x+B_{t})}(\hB ^{(\cosh x/Z_{t})})/Z_{t}
 \right] &=\frac{\ex [\cosh (x+B_{t})]}{\cosh x}\\
 &=\frac{1}{2\cosh x}\left\{ 
 e^{x}\ex [\eb{t}]+e^{-x}\ex [\ebm{t}]
 \right\} , 
\end{align*}
which, regardless of $x$, agrees with $\ex [\eb{t}]$ by symmetry of 
Brownian motion. 

Proof of our \tsref{;tm1} and \ref{;tm2} hinges upon the following 
simple observation: 

\begin{prop}\label{;phinge}
 Given $\mu >0$, let $f:\R \to \R $ be a measurable function 
 such that 
 \begin{align*}
  \int _{\R }dy\,e^{-\mu \cosh y}\left| f(\sinh y)\right| <\infty . 
 \end{align*}
 Then it holds that 
 \begin{align}\label{;eqphinge}
  \int _{\R }dy\,e^{-\mu \cosh y}f(\sinh y)
  =\int _{0}^{\infty }\frac{dv}{v}\,\exp \left( 
  -\frac{1}{2v}\right) 
  \exp \left( -\frac{\mu ^{2}}{2}v\right) 
  \ex \!\left[ 
  f(\beta (v))
  \right] . 
 \end{align}
 Here $\beta =\{ \beta (t)\} _{t\ge 0}$ is a Brownian motion. 
\end{prop}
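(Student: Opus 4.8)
The plan is to evaluate the right-hand side of \eqref{;eqphinge} head-on by writing the Brownian expectation as a Gaussian integral and then interchanging the order of integration. First I would use $\ex[f(\beta(v))]=\int_\R f(z)(2\pi v)^{-1/2}e^{-z^2/(2v)}\,dz$, so that, after combining $\exp(-1/(2v))$ with the Gaussian factor, the right-hand side of \eqref{;eqphinge} becomes
\[
 \frac{1}{\sqrt{2\pi}}\int_0^\infty dv\int_\R dz\,f(z)\,v^{-3/2}
 \exp\!\left( -\frac{1+z^2}{2v}-\frac{\mu^2}{2}v\right).
\]
The idea is then to interchange the two integrations and carry out the $v$-integral first.

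The $v$-integral that appears is the classical identity
\[
 \int_0^\infty v^{-3/2}\exp\!\left( -\frac{a^2}{2v}-\frac{b^2}{2}v\right)dv
 =\frac{\sqrt{2\pi}}{a}\,e^{-ab},\qquad a,b>0,
\]
which one recognizes by noting that $a(2\pi)^{-1/2}v^{-3/2}e^{-a^2/(2v)}$ is the density of the first hitting time $\tau_a(B)$ of level $a$ by the Brownian motion $B$ (equivalently, it is an inverse Gaussian density), together with the Laplace transform $\ex[\exp\{-b^2\tau_a(B)/2\}]=e^{-ab}$, which is the case $\mu=0$ of \eqref{;lt}. Applying this with $a=\sqrt{1+z^2}$ and $b=\mu$ reduces the double integral to $\int_\R (1+z^2)^{-1/2}e^{-\mu\sqrt{1+z^2}}f(z)\,dz$. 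A final change of variables $z=\sinh y$, for which $dz=\cosh y\,dy$, $\sqrt{1+z^2}=\cosh y$, hence $(1+z^2)^{-1/2}dz=dy$ and $e^{-\mu\sqrt{1+z^2}}=e^{-\mu\cosh y}$, turns this into $\int_\R e^{-\mu\cosh y}f(\sinh y)\,dy$, which is the left-hand side of \eqref{;eqphinge}.

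The only point that requires care — and the main (mild) obstacle — is justifying the interchange of integrals. I would run the same computation with $|f|$ in place of $f$: by Tonelli's theorem the resulting nonnegative double integral equals $\int_\R e^{-\mu\cosh y}|f(\sinh y)|\,dy$, which is finite by hypothesis. This legitimizes Fubini's theorem in the computation above and, as a byproduct, shows that $v\mapsto\ex[f(\beta(v))]$ is finite for a.e.\ $v$ and integrable against the measure $v^{-1}\exp(-1/(2v)-\mu^2 v/2)\,dv$, so that the right-hand side of \eqref{;eqphinge} is well defined in the first place. Everything else is elementary.
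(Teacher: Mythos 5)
Your proof is correct and is essentially the paper's own argument run in reverse: the paper starts from the left-hand side, expands $e^{-\mu\cosh y}=\ex\bigl[e^{-\mu^{2}\tau_{\cosh y}(B)/2}\bigr]$ via the inverse Gaussian density \eqref{;taulaw} and the Laplace transform \eqref{;lt}, applies Fubini, and substitutes $y=\argsh x$, whereas you start from the right-hand side, perform the $v$-integral with the very same hitting-time identity, and substitute $z=\sinh y$. Your explicit Tonelli check with $|f|$ is a welcome (and slightly more careful) justification of the interchange, but the underlying decomposition and key lemma are identical.
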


\begin{rem}\label{;rrel}
 Although not stated in an explicit manner in the existing 
 literature, this is not the first time for the relation 
 \eqref{;eqphinge} to be noticed; see \rref{;rlaws} for more 
 information. 
\end{rem}

We give a proof of the above proposition in the next section. 
The proposition also enables us to obtain 
some invariance formulae for Cauchy variable, which are of interest 
in their own right: 

\begin{thm}\label{;tm3}
 Let $\ve $ be a Rademacher (or 
 symmetric Bernoulli) variable taking values $\pm 1$ with 
 probability $1/2$ independently of the standard Cauchy variable $C$. 
 It then holds that: 
 \begin{enumerate}[(i)]{}
  \item for every $a\in \R $ with $|a|\ge 1$ and for every 
  $\theta \in [-1,1]$, 
  \begin{align*}
   aC+\theta \sqrt{1+a^{2}C^{2}}\,\ve 
   \eqd 
   \bigl( 
   a+\theta \sqrt{a^{2}-1}\,\ve 
   \bigr) C;
  \end{align*}
 
  \item for every $a,\theta \in [-1,1]$, 
  \begin{align*}
   aC+\theta \sqrt{1+a^{2}C^{2}}\,\ve 
   \eqd 
   aC+\theta \sqrt{1-a^{2}}\,\ve . 
  \end{align*}
 \end{enumerate}
\end{thm}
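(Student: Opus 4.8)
The plan is to verify both identities by matching characteristic functions. Using the independence of $C$ and $\ve$ together with $C\eqd-C$ and $\ve\eqd-\ve$, and the fact that $\sqrt{1+a^{2}C^{2}}$ is even in $C$, one checks that each of the two asserted identities is invariant under $a\mapsto-a$ and under $\theta\mapsto-\theta$ and that every random variable occurring is symmetric; hence it suffices to treat $a,\theta\ge0$ (with $a\ge1$ in (i) and $a\le1$ in (ii)) and to prove equality of characteristic functions at an arbitrary $t>0$. Averaging over $\ve$,
\[
 \ex\!\left[e^{it(aC+\theta\sqrt{1+a^{2}C^{2}}\,\ve)}\right]
 =\ex\!\left[e^{itaC}\cos\!\bigl(t\theta\sqrt{1+a^{2}C^{2}}\bigr)\right]
 =\ex\bigl[g_{t}(C)\bigr],\qquad g_{t}(z):=e^{itaz}\cos\!\bigl(t\theta\sqrt{1+a^{2}z^{2}}\bigr),
\]
so everything reduces to the single identity $\ex[g_{t}(C)]=g_{t}(i)=e^{-ta}\cos\!\bigl(t\theta\sqrt{1-a^{2}}\bigr)$.

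Granting this, the theorem is immediate. When $0\le a\le1$ the right-hand side is $e^{-ta}\cos(t\theta\sqrt{1-a^{2}})$, which for $t>0$ is exactly $\ex[e^{it(aC+\theta\sqrt{1-a^{2}}\,\ve)}]$, and as both variables are symmetric their laws agree, giving (ii). When $a\ge1$, writing $\sqrt{1-a^{2}}=i\sqrt{a^{2}-1}$ turns the right-hand side into $e^{-ta}\cosh(t\theta\sqrt{a^{2}-1})=\tfrac12 e^{-t(a+\theta\sqrt{a^{2}-1})}+\tfrac12 e^{-t(a-\theta\sqrt{a^{2}-1})}$; since $a-\theta\sqrt{a^{2}-1}\ge a-\sqrt{a^{2}-1}>0$, this equals $\ex[e^{it(a+\theta\sqrt{a^{2}-1}\,\ve)C}]$, and symmetry again identifies the laws, giving (i).

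To prove $\ex[g_{t}(C)]=g_{t}(i)$ I would argue as follows. First, $\zeta\mapsto\cos\sqrt{\zeta}$ is an entire function (its Taylor series contains only integral powers of $\zeta$), so $g_{t}$ is an \emph{entire} function of $z$; no branch of the square root need be fixed. Secondly, $g_{t}$ is bounded on the closed upper half-plane: for $w$ either square root of $1+a^{2}z^{2}$ one has $(\mathrm{Im}\,w)^{2}=\tfrac12\bigl(|1+a^{2}z^{2}|-\mathrm{Re}(1+a^{2}z^{2})\bigr)$, and with $z=x+iy$, $y\ge0$, the algebraic identity $|1+a^{2}z^{2}|^{2}=(1+a^{2}x^{2}+a^{2}y^{2})^{2}-4a^{2}y^{2}$ yields $(\mathrm{Im}\,w)^{2}\le a^{2}y^{2}$; hence, for $t>0$, $a\ge0$, $\theta\in[0,1]$,
\[
 |g_{t}(z)|=e^{-tay}\,\bigl|\cos(t\theta w)\bigr|\le e^{-tay}\cosh\!\bigl(t\theta|\mathrm{Im}\,w|\bigr)\le e^{-tay}\cosh(ta\theta y)\le e^{-ta(1-\theta)y}\le1 .
\]
A holomorphic function that is bounded on the upper half-plane and continuous up to the real line is recovered from its boundary values by the Poisson integral, and the Poisson kernel of the upper half-plane at the point $i$ is precisely the standard Cauchy density $\{\pi(1+x^{2})\}^{-1}$; therefore $\ex[g_{t}(C)]=\int_{\R}g_{t}(x)\{\pi(1+x^{2})\}^{-1}\,dx=g_{t}(i)$, and $g_{t}(i)=e^{-ta}\cos(t\theta\sqrt{1-a^{2}})$ by substituting $z=i$.

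The only point that requires a moment's thought is the boundedness estimate — specifically the inequality $(\mathrm{Im}\sqrt{1+a^{2}z^{2}})^{2}\le a^{2}(\mathrm{Im}\,z)^{2}$ on the upper half-plane — everything else being routine. For the skeptical reader there is also a more computational route to the case $0<a<1$: putting $aC=\sinh\phi$ recasts $\ex[g_{t}(C)]$ as $\tfrac1{2\pi}\int_{\R}\bigl(\{\cosh(\phi-i\nu)\}^{-1}+\{\cosh(\phi+i\nu)\}^{-1}\bigr)\,\mathrm{Re}\,e^{it(\sinh\phi+\theta\cosh\phi)}\,d\phi$ with $\cos\nu=a$; shifting each line integral up to $\mathrm{Im}\,\phi=\pi/2$, where $\sinh\phi+\theta\cosh\phi$ becomes purely imaginary and the exponential decays, leaves a single residue $e^{-t\cos\nu}e^{it\theta\sin\nu}$ plus a remainder integral which, being pure imaginary while $\ex[g_{t}(C)]$ is real, is forced to cancel the spurious imaginary part — leaving $e^{-t\cos\nu}\cos(t\theta\sin\nu)=e^{-ta}\cos(t\theta\sqrt{1-a^{2}})$, in the spirit of the hyperbolic changes of variable underlying \pref{;phinge}.
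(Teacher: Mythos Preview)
Your argument is correct and takes a genuinely different route from the paper. The paper deduces \tref{;tm3} from \pref{;pcauchy}, which in turn is proved via the integral identity of \lref{;lkey} (case $\la =0$) together with \pref{;phinge}: one substitutes $x=\argsh(C\cosh x')$ or $x=\argsh(aC)$ into \eqref{;eqlkeyd}, takes the expectation in $C$, applies \pref{;phinge} to recast the resulting $v$-integral, and then uses the injectivity of the Laplace transform in $\mu$ (after the change of variables $y=\argch u$) to identify characteristic functions pointwise in $y$. Only afterwards does one set $a=\cosh x$, $\theta=\tanh y$ and tidy up the endpoints $\theta=\pm 1$ and the sign of $a$.

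You bypass all of this by observing that the left-hand characteristic function is $\ex[g_{t}(C)]$ for an \emph{entire} $g_{t}$ that is bounded on the closed upper half-plane, and then invoking the Poisson representation for bounded harmonic functions to conclude $\ex[g_{t}(C)]=g_{t}(i)$, the Cauchy density being exactly the Poisson kernel at $i$. The crucial estimate $(\mathrm{Im}\sqrt{1+a^{2}z^{2}})^{2}\le a^{2}(\mathrm{Im}\,z)^{2}$ is checked cleanly, and your handling of the sign reductions and the endpoint $\theta=1$ is fine. This is shorter, self-contained, and makes transparent why the Cauchy variable is the ``right'' object here. What the paper's route buys, on the other hand, is that its intermediate \pref{;pcauchy} (stated in the $\sinh/\argsh$ parametrisation) plugs directly into the partial rederivations of \tref{;tm1} in \ssref{;partial}, and that the same machinery (\pref{;phinge}, \lref{;lkey}) drives the main \tsref{;tm1}--\ref{;tm2}; so within the paper the invariance formulae appear as one more manifestation of a single integral identity rather than as a separate complex-analytic fact. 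It is worth noting that the explicit characteristic functions you obtain, $e^{-|a\xi|}\cos(\theta\xi\sqrt{1-a^{2}})$ and $e^{-|a\xi|}\cosh(\theta\xi\sqrt{a^{2}-1})$, are recorded in the paper only \emph{a posteriori} in \rref{;rtm3}\,\thetag{1}.
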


As both sides of  the claimed identities contain 
$aC$ in common, we refer to them as invariance 
formulae. In \ssref{;partial}, we show that by applying 
these invariance formulae, identities in \tref{;tm1} are 
recovered partly. As for invariance of Cauchy variable, the 
following one would also be of interest: 
\begin{align}\label{;invcauchy}
 \frac{
 C\eb{\tau}\!\cosh x+\hbe (A_{\tau })
 }
 {
 \sqrt{
 1+\left( 
 \eb{\tau}\!\sinh x+\beta (A_{\tau })
 \right) ^{2}
 }
 }
 \eqd C, 
\end{align}
which is an immediate consequence of the identity 
\eqref{;eqtm1} in \tref{;tm1}. Here on the left-hand side, four 
elements $B$, $C$, $\beta $ and $\hbe $ are independent, and $x$ 
is any real as well as $\tau $ is any finite 
stopping time of $Z$ (recall \rref{;rstop}\,\thetag{1}); 
notice that the random variable on the left-hand side 
of \eqref{;invcauchy} is independent of $Z_{\tau }$ as 
\eqref{;eqtm1} indicates. See also \rref{;rpkey}\,\thetag{3} below. 

We give an outline of the paper. 
In \sref{;prfs}, 
we give proofs of \pref{;phinge} and \tsref{;tm1}--\ref{;tm3}: 
we prove \pref{;phinge} and recall the definition of 
generalized inverse Gaussian (GIG for short) distributions 
in \ssref{;pphinge}; in \ssref{;ptms}, we prove \tsref{;tm1} 
and \ref{;tm2} preparing \psref{;pparti} and \ref{;pkey}, 
the former of which is due to Matsumoto--Yor \cite{myPI} and 
the latter of which concerns some properties of 
GIG laws relevant to the two theorems; 
in \ssref{;prftm3}, we give a proof of \tref{;tm3} by utilizing 
a particular case of \lref{;lkey}, whose assertion follows from 
\pref{;phinge} and is also used in the proof of \pref{;pkey}. 
In \sref{;related}, we provide some 
results related to, as well as deduced from, our discussions 
developed in \sref{;prfs}. One of them is the derivation, 
in a self-contained way, of an identity in law due to Dufresne 
\cite{duf} that is another profound identity in the study of 
exponential functionals of Brownian motion. In \sref{;cr}, 
we give concluding remarks in relation to the SDE approach 
to the identity \eqref{;boug2} introduced at the beginning of 
the present section. Finally in the 
appendix, we explore several facts relevant to the (unnormalized) 
density function of the so-called Hartman--Watson law, which 
appears in an explicit representation for the joint 
law of $\eb{t}$ and $A_{t}$ given $t>0$, due to Yor \cite{yor92}. 

Throughout the paper, all random variables as well as all 
stochastic processes are assumed to be defined on a common 
probability space whose probability measure is denoted by $\pr $. 
Expectation relative to $\pr $ is denoted by $\ex $. We also 
suppose that the probability space we work in is nice 
enough to support regular conditional probabilities 
which we denote by the symbol $\pr (\ |\ )$. We say that a random 
variable $X$ is \emph{symmetric} if $-X$ has the same law as $X$: 
$-X\eqd X$. Other notation and terminology will be introduced 
as needed. 

\section{Proofs}\label{;prfs}

We devote this section to proofs of the results introduced in the 
previous section. 

\subsection{Proof of \pref{;phinge}}\label{;pphinge}
In this subsection, we prove \pref{;phinge} and restate it 
in terms of random variables, to which we relate the notion of 
generalized inverse Gaussian distributions. 
We recall that for every 
real $a\neq 0$, the law of $\tau _{a}(B)$ is given by 
\begin{align}\label{;taulaw}
 \pr \left( \tau _{a}(B)\in dv\right) 
 =\frac{|a|}{\sqrt{2\pi v^{3}}}\exp \left( 
 -\frac{a^{2}}{2v}
 \right) dv,\quad v>0, 
\end{align}
which may be seen from \eqref{;lt} as well as from the 
fact that by reflection principle, 
\begin{align}\label{;idenst}
 \tau _{a}(B)\eqd \frac{a^{2}}{B_{1}^{2}}. 
\end{align} 
In the sequel we denote by 
\begin{align*}
 \argsh x\equiv \log \bigl( x+\sqrt{1+x^{2}}\bigr) ,\quad x\in \R , 
\end{align*}
the inverse function of the hyperbolic sine function. 

\begin{proof}[Proof of \pref{;phinge}]
 By \eqref{;lt} and \eqref{;taulaw}, we may write 
 \begin{align*}
  e^{-\mu \cosh y}
  &=\ex \Bigl[ e^{-\frac{\mu ^{2}}{2}\tau _{\cosh y}(B)}\Bigr] \\
  &=\int _{0}^{\infty }dv\,\frac{\cosh y}{\sqrt{2\pi v^{3}}}\exp \left( 
  -\frac{\cosh ^{2}y}{2v}
  \right) \exp \left( -\frac{\mu ^{2}}{2}v\right) 
 \end{align*}
 for every $\mu >0$ and $y\in \R $. Plugging the last expression 
 into the left-hand side of \eqref{;eqphinge} and using Fubini's 
 theorem, we have the equality 
 \begin{align*}
  &\int _{\R }dy\,e^{-\mu \cosh y}f(\sinh y)\\
  &=\int _{0}^{\infty }\frac{dv}{v}\,\exp \left( -\frac{1}{2v}\right) 
  \exp \left( -\frac{\mu ^{2}}{2}v\right) 
  \int _{\R }dy\,\frac{\cosh y}{\sqrt{2\pi v}}
  \exp \left( -\frac{\sinh ^{2}y}{2v}\right) f(\sinh y). 
 \end{align*}
 Changing the variables with $y=\argsh x,\,x\in \R $, in the 
 integral with respect to $y$, leads to the desired expression. 
\end{proof}

It is possible to rephrase \eqref{;eqphinge} as an identity 
in law. For every $\mu >0$, we consider 
a random variable $\zmu $ whose law is given by 
\begin{align}\label{;zlaw}
 \pr \left( \zmu \in dx\right) 
 =\frac{1}{2K_{0}(\mu )}e^{-\mu \cosh x}\,dx,\quad x\in \R . 
\end{align}
Here and in what follows, for every $\nu \in \R $, we denote by 
$K_{\nu }$ the modified Bessel function of the third kind 
(Macdonald function) 
of order $\nu $, one of whose integral representations is given by 
\begin{align}\label{;intk1}
 K_{\nu }(z)
 =\frac{1}{2}\int _{\R }dx\,e^{-z\cosh x-\nu x}
 =\frac{1}{2}\int _{\R }dx\,e^{-z\cosh x}\cosh (\nu x),\quad 
 z>0 
\end{align}
(cf. \cite[Equation~(5.10.23)]{leb}). The statement of \pref{;phinge} 
is then rephrased as 
\begin{align}\label{;proto1}
 \sinh \zmu \eqd \beta \left( \frac{e^{\zmu }}{\mu }\right) 
\end{align}
for every $\mu >0$, where on the right-hand side, 
$\beta =\{ \beta (t)\} _{t\ge 0}$ is a 
Brownian motion independent of $\zmu $; indeed, we divide by 
$2K_{0}(\mu )$ both sides of \eqref{;eqphinge} and change the 
variables with $v=e^{y}/\mu ,\,y\in \R $, on the right-hand side 
to see that 
\begin{equation}\label{;proto1d}
 \begin{split}
 \ex \!\left[ 
 f(\sinh \zmu )
 \right] 
 &=\frac{1}{2K_{0}(\mu )}\int _{\R }dy\,e^{-\mu \cosh y}
 \ex \!\left[ 
 f(\beta (e^{y}/\mu ))
 \right] \\
 &=\ex \!\left[ 
 f(\beta (e^{\zmu }/\mu ))
 \right] 
 \end{split}
\end{equation}
for any bounded measurable function $f$ on $\R $. As will be seen in the 
next subsection, if we let $\mu $ vary according to the law of 
$1/Z_{t}\equiv \eb{t}/A_{t}$ for a fixed $t>0$, or in other words, 
if we integrate both sides of \eqref{;proto1d} over $\mu >0$ 
with respect to $\pr (\eb{t}/A_{t}\in d\mu )$, then 
Bougerol's identity \eqref{;boug1} is recovered. 

We may relate the random variable $\zmu $ to a generalized 
inverse Gaussian (GIG) law: recall from 
\cite[Section~9]{myPII} that given three parameters 
$\nu \in \R $ and $a,b>0$, a random variable 
$I\equiv \gig{\nu }{a}{b}$ is said to follow the 
$\GIG{\nu }{a}{b}$ distribution if 
\begin{align}\label{;gig}
 P(I\in dv)
 =\left( \frac{b}{a}\right) ^{\nu }
 \frac{v^{\nu -1}}{2K_{\nu }(ab)}
 \exp \left\{ 
 -\frac{1}{2}\left( 
 \frac{a^{2}}{v}+b^{2}v
 \right) 
 \right\} dv,\quad v>0. 
\end{align}
To verify that the right-hand side does give a probability 
distribution, it is meaningful to recall another integral 
representation of $K_{\nu }$, which one obtains 
from \eqref{;intk1} by changing the variables 
with $e^{-x}=2v/z,\,v>0$: 
\begin{align}\label{;intk2}
 K_{\nu }(z)=\frac{1}{2}\left( 
 \frac{z}{2}
 \right) ^{-\nu }\!\int _{0}^{\infty }dv\,
 v^{\nu -1}\exp \left( 
 -v-\frac{z^{2}}{4v}
 \right) ,\quad z>0
\end{align}
(see e.g., \cite[Equation~(5.10.25)]{leb}); 
a probabilistic expression for \eqref{;intk2} in terms of 
gamma variable, will be given in \eqref{;intk2d}.  
In a similar way to \eqref{;proto1d}, we 
see that $e^{\zmu }/\mu $ is $\GIG{0}{1}{\mu }$-distributed: 
\begin{align}\label{;idengig}
 \frac{e^{\zmu }}{\mu }\eqd \gig{0}{1}{\mu }, \quad 
 \text{or equivalently,}\ 
 e^{\zmu }\eqd \gig{0}{\sqrt{\mu }}{\sqrt{\mu }}. 
\end{align}
Indeed, for any bounded measurable function $f$ on 
$(0,\infty )$, we have 
\begin{align*}
 \ex \!\left[ 
 f(e^{z_{\mu }}/\mu )
 \right] 
 &=\frac{1}{2K_{0}(\mu )}\int _{\R }dx\,e^{-\mu \cosh x}f(e^{x}/\mu )\\
 &=\frac{1}{2K_{0}(\mu )}\int _{0}^{\infty }
 \frac{dv}{v}\,\exp \left( -\frac{1}{2v}\right) 
 \exp \left( -\frac{\mu ^{2}}{2}v\right) f(v)\\
 &=\ex \bigl[ 
 f(\gig{0}{1}{\mu })
 \bigr] 
\end{align*}
by \eqref{;gig}, which shows the former identity in \eqref{;idengig}. 
Here for the second line, we changed the variables 
with $e^{x}/\mu =v$. The latter follows from the elementary property that 
$c^{2}\gig{\nu }{a}{b}\eqd \gig{\nu }{ac}{a/c}$ for every 
$\nu \in \R $ and $a,b,c>0$. The above computation also reveals the 
following representation of $K_{0}$, which is immediate from 
\eqref{;intk2} as well, and will be recalled repeatedly: 
\begin{align}\label{;k0}
 K_{0}(z)=\frac{1}{2}\int _{0}^{\infty }\frac{dv}{v}\,
 \exp \left( -\frac{1}{2v}\right) 
 \exp \left( -\frac{z^{2}}{2}v\right) ,\quad z>0. 
\end{align}
In the case $\nu =0$, the GIG distribution is also referred to 
as Halphen's harmonic law or the hyperbola distribution; 
see \cite{jor} and \cite{ses} (be aware that the above 
parametrization for GIG laws is slightly different from the one 
used in these references). 
We also remark that when $a,\mu >0$, the stopping time 
$\tau _{a}(\db{\mu })$ as appeared in \eqref{;lt} follows the 
inverse Gaussian distribution, namely it is identical 
in law with $\gig{-1/2}{a}{\mu }$. 

For later use in \rref{;rpkey}, it is convenient to observe here that 
the following convergence in law takes place: 

\begin{prop}\label{;pconvlaw}
 $\gig{0}{\sqrt{\mu }}{\sqrt{\mu }}$ converges in law 
 to $1$ as $\mu \to \infty $. The same convergence holds true for 
 $e^{\zmu }$ by virtue of \eqref{;idengig}. 
\end{prop}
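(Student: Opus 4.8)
The plan is to show that $\gig{0}{\sqrt{\mu }}{\sqrt{\mu }}$ converges in law to the constant $1$ as $\mu \to \infty $ by a direct analysis of its density, using the asymptotics of the Macdonald function $K_{0}$. From \eqref{;gig} with $\nu =0$ and $a=b=\sqrt{\mu }$, the density of $I_{\mu }:=\gig{0}{\sqrt{\mu }}{\sqrt{\mu }}$ at $v>0$ is
\begin{align*}
 p_{\mu }(v)=\frac{1}{2vK_{0}(\mu )}\exp \left\{ -\frac{\mu }{2}\left( \frac{1}{v}+v\right) \right\} .
\end{align*}
Since $\tfrac{1}{v}+v\ge 2$ with equality only at $v=1$, the exponential factor concentrates mass near $v=1$ as $\mu \to \infty $. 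The first step is to recall (or quote, e.g.\ from \cite{leb}) the standard large-argument asymptotics $K_{0}(\mu )\sim \sqrt{\pi /(2\mu )}\,e^{-\mu }$ as $\mu \to \infty $. Then $2K_{0}(\mu )e^{\mu }\sim \sqrt{2\pi /\mu }$, so that
\begin{align*}
 p_{\mu }(v)\sim \sqrt{\frac{\mu }{2\pi }}\,\frac{1}{v}\exp \left\{ -\frac{\mu }{2}\left( \frac{1}{v}+v-2\right) \right\} .
\end{align*}

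The second step is a Laplace-type estimate: writing $g(v)=\tfrac{1}{v}+v-2$, one has $g(1)=0$, $g'(1)=0$, $g''(1)=2$, so near $v=1$ the exponent behaves like $-\tfrac{\mu }{2}(v-1)^{2}$, which after the substitution $v=1+s/\sqrt{\mu }$ becomes the standard Gaussian kernel. I would carry this out by computing the characteristic function or, more simply, by showing that for any $\delta \in (0,1)$,
\begin{align*}
 \pr \left( |I_{\mu }-1|\ge \delta \right) =\int _{|v-1|\ge \delta }p_{\mu }(v)\,dv\longrightarrow 0
\end{align*}
as $\mu \to \infty $. On the region $\{ |v-1|\ge \delta \}$ one has $g(v)\ge c_{\delta }>0$ for $v$ in any bounded piece, while for large $v$ the term $v-2$ alone forces exponential decay and for $v$ near $0$ the term $1/v$ does; splitting the integral into $(0,1-\delta ]$, $[1+\delta ,M]$ and $[M,\infty )$ and bounding each piece by comparison with an integrable majorant times $e^{-c_{\delta }\mu /2}$ (using $\sqrt{\mu }\,e^{-c_{\delta }\mu /2}\to 0$) gives the claim. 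An alternative, perhaps cleaner, route is to use the representation $e^{\zmu }\eqd \gig{0}{\sqrt{\mu }}{\sqrt{\mu }}$ from \eqref{;idengig} together with the explicit density \eqref{;zlaw} of $\zmu $: there $\pr (\zmu \in dx)=e^{-\mu \cosh x}\,dx/(2K_{0}(\mu ))$, and since $\cosh x\ge 1+x^{2}/2$ one sees directly that $\sqrt{\mu }\,\zmu $ is asymptotically standard normal, hence $\zmu \to 0$ in probability and $e^{\zmu }\to 1$ in law by the continuous mapping theorem.

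The argument is essentially routine; the only point requiring a little care is justifying the uniform control of the density's normalization, i.e.\ the precise asymptotics of $K_{0}(\mu )$ as $\mu \to \infty $, which I expect to be the main (minor) obstacle. This can be avoided altogether if one does not need a local limit statement: to get convergence in law it suffices to note that $g(v)=\tfrac{1}{v}+v-2\ge 0$ vanishes only at $v=1$, so for any $\delta>0$ there is $c_\delta>0$ with $\inf_{|v-1|\ge\delta}g(v)=c_\delta$ (the infimum being attained since $g\to\infty$ at both ends of $(0,\infty)$), whence
\begin{align*}
 \pr \left( |I_{\mu }-1|\ge \delta \right) \le \frac{e^{-\mu c_{\delta }/2}}{2K_{0}(\mu )}\int _{0}^{\infty }\frac{dv}{v}\exp \left\{ -\frac{1}{2}\left( \frac{1}{v}+v\right) \right\} =\frac{K_{0}(1)}{K_{0}(\mu )}e^{-\mu c_{\delta }/2},
\end{align*}
using \eqref{;k0} (with an extra factor from $\mu \ge 1$, say, to bound $\exp\{-\tfrac{\mu-1}{2}g(v)\}\le 1$) for the remaining integral; since $K_{0}(\mu )\to 0$ only polynomially fast while $e^{-\mu c_{\delta}/2}\to 0$ exponentially, the right-hand side tends to $0$. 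The final sentence of the statement then follows immediately from \eqref{;idengig}.
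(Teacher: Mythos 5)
Your overall strategy --- a Laplace-type concentration estimate on the density of $I_{\mu }:=\gig{0}{\sqrt{\mu }}{\sqrt{\mu }}$ showing $\pr (|I_{\mu }-1|\ge \delta )\to 0$, which is equivalent to convergence in law to the constant $1$ --- is legitimate and genuinely different from the paper's. The paper instead tests against a bounded continuous $f$, performs the exact substitution $\sqrt{v}-1/\sqrt{v}=x/\sqrt{\mu }$ to turn the integral into a Gaussian one, and concludes from \eqref{;klim} and the bounded convergence theorem; that exact change of variables is also what yields the Gaussian fluctuation statement exploited later in \rref{;rpkey}\,\thetag{2}. Your first sketch, carried through with the asymptotics $K_{0}(\mu )\sim \sqrt{\pi /(2\mu )}\,e^{-\mu }$ that you quote, would work.

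The ``cleaner'' shortcut at the end, however, contains a genuine error. The premise that ``$K_{0}(\mu )\to 0$ only polynomially fast'' is false: $K_{0}(\mu )\asymp \mu ^{-1/2}e^{-\mu }$, so your displayed bound satisfies $K_{0}(1)K_{0}(\mu )^{-1}e^{-\mu c_{\delta }/2}\asymp \sqrt{\mu }\,e^{\mu (1-c_{\delta }/2)}\to \infty $ whenever $c_{\delta }<2$, which is the case for small $\delta $ since $c_{\delta }=\delta ^{2}/(1+\delta )$. The bound as displayed therefore proves nothing. The repair is to keep the full exponent: on $\{ |v-1|\ge \delta \} $, with $g(v)=v+1/v-2\ge c_{\delta }$,
\begin{align*}
 \exp \left\{ -\frac{\mu }{2}\left( \frac{1}{v}+v\right) \right\}
 =e^{-\mu }e^{-\frac{\mu -1}{2}g(v)}e^{-\frac{1}{2}g(v)}
 \le e^{1-\mu }e^{-(\mu -1)c_{\delta }/2}
 \exp \left\{ -\frac{1}{2}\left( \frac{1}{v}+v\right) \right\} ,
\end{align*}
so that by \eqref{;k0},
\begin{align*}
 \pr \left( |I_{\mu }-1|\ge \delta \right)
 \le \frac{K_{0}(1)}{K_{0}(\mu )}\,e^{1-\mu }e^{-(\mu -1)c_{\delta }/2}.
\end{align*}
Now the conclusion requires $e^{-\mu }/K_{0}(\mu )=O(\sqrt{\mu })$, i.e.\ a lower bound $K_{0}(\mu )\ge c\,\mu ^{-1/2}e^{-\mu }$ --- precisely the content of \eqref{;klim} that the shortcut was meant to avoid (such a bound is elementary, e.g.\ by restricting the integral in \eqref{;intk1} to $|x|\le 1/\sqrt{\mu }$, but it must be stated). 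Alternatively, one can dispense with $K_{0}$ entirely by writing the probability as a ratio of two integrals and lower-bounding the denominator over a small neighbourhood of $v=1$; but as written, the final step of your shortcut fails.
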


\begin{proof}
 Let $f:(0,\infty )\to \R $ be bounded and continuous and note the 
 equalities 
 \begin{align*}
  &\ex \bigl[ 
  f(\gig{0}{\sqrt{\mu }}{\sqrt{\mu }})
  \bigr] \times \sqrt{\frac{2\mu }{\pi }}
  e^{\mu }K_{0}(\mu )
  \\
  &=\sqrt{\frac{\mu }{2\pi }}\int _{0}^{\infty }\frac{dv}{v}\,
  \exp \left\{ 
  -\frac{\mu }{2}\left( \sqrt{v}-\frac{1}{\sqrt{v}}\right) ^{2}
  \right\} f(v)\\
  &=\int _{\R }\frac{dx}{\sqrt{2\pi }}\,
  \exp \left( -\frac{x^{2}}{2}\right) 
  \frac{1}{\sqrt{1+x^{2}/(4\mu )}}
  f\left( \left( 
  \frac{x}{2\sqrt{\mu }}+\sqrt{1+\frac{x^{2}}{4\mu }}\,
  \right) ^{2}\right) , 
 \end{align*}
 where we changed the variables with 
 $\sqrt{v}-1/\sqrt{v}=x/\sqrt{\mu },\,x\in \R $, for the 
 last line. We let $\mu \to \infty $: it is known that for any $\nu \in \R $, 
 \begin{align}\label{;klim}
  \sqrt{\frac{2\mu }{\pi }}
  e^{\mu }K_{\nu }(\mu )\to 1 
 \end{align}
 (see \cite[Equation~(5.16.5)]{leb}); on the other hand, thanks to 
 boundedness and continuity of $f$, the bounded convergence theorem 
 entails that the last expression of the above equalities converges to 
 \begin{align*}
  \int _{\R }\frac{dx}{\sqrt{2\pi }}\exp \left( -\frac{x^{2}}{2}\right) 
  \times f(1)=f(1). 
 \end{align*}
 Therefore we have 
 \begin{align*}
  \lim _{\mu \to \infty }\ex \bigl[ 
  f(\gig{0}{\sqrt{\mu }}{\sqrt{\mu }})
  \bigr] =f(1)
 \end{align*}
 for any bounded and continuous $f$ on $(0,\infty )$, which 
 shows the proposition. 
\end{proof}

The above proposition may also be deduced from 
Equation~\thetag{3.10} or \thetag{3.11} in \cite{jor}; 
see \rref{;rpkey}\,\thetag{2} below as to those two 
equations. 

\subsection{Proof of \tsref{;tm1} and \ref{;tm2}}\label{;ptms}

In this subsection we prove \tsref{;tm1} and \ref{;tm2}. We begin with 
a proposition, the assertion of which is a particular case of that of 
\cite[Proposition~1.7]{myPI} as we are dealing with the Brownian motion 
without drift. 

\begin{prop}\label{;pparti}
 Let $\tau $ be a stopping time of the process $Z$ such that 
 $0<\tau <\infty $ a.s.\ and $\mu $ a positive real. Then 
 the conditional distribution of $\eb{\tau }/\mu $ given 
 $1/Z_{\tau }=\mu $ coincides with the $\GIG{0}{1}{\mu }$ 
 distribution; in other words, conditionally on $1/Z_{\tau }=\mu $, 
 it holds that  
 \begin{align}\label{;idenbz}
  B_{\tau }\eqd \zmu 
 \end{align}
 in view of \eqref{;idengig}. 
\end{prop}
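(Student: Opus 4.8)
The statement is exactly the driftless case of \cite[Proposition~1.7]{myPI}, so one option is simply to quote that result. I would nonetheless like to sketch a more self-contained route, which splits into two steps: a direct computation at a fixed time, and then the passage to a stopping time of $Z$, which is where the real content lies.

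First I would settle the case $\tau\equiv t$ with $t>0$ fixed. Recall Yor's formula \cite{yor92} for the joint law of $\eb{t}$ and $A_{t}$ (see also the appendix):
\begin{align*}
 \pr(A_{t}\in du,\,B_{t}\in db)=\frac{1}{u}\exp\!\left(-\frac{1+e^{2b}}{2u}\right)\theta_{e^{b}/u}(t)\,du\,db,\qquad u>0,\ b\in\R,
\end{align*}
where $\theta_{r}(\cdot)$ denotes the (unnormalized) Hartman--Watson density. The crucial observation is that the index $e^{b}/u$ of $\theta$ is nothing but $\eb{t}/A_{t}=1/Z_{t}$. Hence, changing variables from $(b,u)$ to $(\mu,u)$ via $\mu=e^{b}/u$ (so that $e^{2b}=\mu^{2}u^{2}$ and $du\,db=\mu^{-1}\,du\,d\mu$), the display above becomes
\begin{align*}
 \pr(A_{t}\in du,\,1/Z_{t}\in d\mu)=\frac{\theta_{\mu}(t)}{\mu}\cdot\frac{1}{u}\exp\!\left(-\frac{1}{2u}-\frac{\mu^{2}u}{2}\right)du\,d\mu.
\end{align*}
Dividing by the $\mu$-marginal and using \eqref{;k0} in the form $\int_{0}^{\infty}u^{-1}e^{-1/(2u)-\mu^{2}u/2}\,du=2K_{0}(\mu)$, the factor $\theta_{\mu}(t)/\mu$ cancels, and the conditional law of $A_{t}$ given $1/Z_{t}=\mu$ has density $\{2K_{0}(\mu)\}^{-1}u^{-1}\exp(-1/(2u)-\mu^{2}u/2)$, $u>0$, which by \eqref{;gig} is the $\GIG{0}{1}{\mu}$ density. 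Since $\eb{t}=\mu A_{t}$ on the event $\{1/Z_{t}=\mu\}$, the conditional law of $\eb{t}/\mu$ given $1/Z_{t}=\mu$ is $\GIG{0}{1}{\mu}$ as well, that is---by \eqref{;idengig}---the law of $e^{\zmu}/\mu$; equivalently, $B_{t}\eqd\zmu$ conditionally on $1/Z_{t}=\mu$.

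The remaining step, which I expect to be the main obstacle, is to replace the fixed $t$ by an arbitrary finite stopping time $\tau$ of $Z$. The difficulty is that $B_{\tau}$ is \emph{not} measurable with respect to the natural filtration of $Z$---which is strictly contained in that of $B$, by \rref{;rstop}\,\thetag{2}---so the asserted conditional law genuinely involves the joint evolution of the pair $(B,Z)$ and cannot be read off from fixed-time marginal laws. For this I would invoke the results of Matsumoto--Yor: $Z$ is a strong Markov diffusion in its own filtration, and for every stopping time $\tau$ of $Z$ with $0<\tau<\infty$ a.s.\ the conditional law of $\eb{\tau}$ given $\sigma(Z_{s};\,s\le\tau)$ depends on this $\sigma$-field only through $Z_{\tau}$ and coincides with the conditional law already identified at fixed times---this is precisely the driftless case of \cite[Proposition~1.7]{myPI}. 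Together with the fixed-time computation, which serves only to make the GIG parameters explicit, this yields the proposition. Reproving this last step from scratch would require redoing the Matsumoto--Yor strong-Markov / $h$-transform analysis of $(B,Z)$, which is the genuinely substantial ingredient.
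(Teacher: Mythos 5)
Your proposal is correct and follows essentially the same route as the paper: the paper likewise proves the proposition by quoting the driftless case of Matsumoto--Yor's Proposition~1.7, and your fixed-time computation from Yor's joint-law formula is exactly the verification carried out in \rref{;rpparti}\,\thetag{3} (via \eqref{;jlaz} and \eqref{;izlaw}). The only cosmetic difference is that the paper's \rref{;rpparti}\,\thetag{1} makes the stopping-time step slightly more explicit by writing $\eb{\tau }=\bigl( Z_{\tau }\int _{\tau }^{\infty }Z_{s}^{-2}\,ds\bigr) ^{-1}$ and applying the strong Markov property of $Z$ together with the law of the perpetual functional $\int _{0}^{\infty }Z_{s}^{-2}\,ds$ under $P_{z}$, whereas you cite the conditional-law form of the same result directly.
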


\begin{rem}\label{;rsym}
 Since $z_{\mu }$ is symmetric for each $\mu >0$ 
 as seen from \eqref{;zlaw}, 
 the relation \eqref{;idenbz} indicates that $B_{\tau }$ is 
 symmetric, which fact may be 
 regarded as a reflection of strict containedness of 
 the natural filtration of $Z$ in that of $B$ referred to 
 in \rref{;rstop}\,\thetag{2}.  
\end{rem}

To keep the paper self-contained as much as possible, 
we provide in \rref{;rpparti} below reasoning to deduce 
\pref{;pparti} from the diffusion property of $Z$ 
discussed in detail in \cite{myPI,myPII}. 
Thanks to the proposition, \tsref{;tm1} and \ref{;tm2} 
are immediate consequences of the following 
extensions of the identity \eqref{;proto1}: as in the 
statements of those two theorems, we let 
$\beta =\{ \beta (t)\} _{t\ge 0}$, 
$\hbe =\{ \hbe (t)\} _{t\ge 0}$ and 
$\hB =\{ \hB _{t}\} _{t\ge 0}$ denote Brownian motions. 

\begin{prop}\label{;pkey}
 For every fixed $\mu >0$ and $x\in \R $, we have the following 
 identities in law: 
 \begin{enumerate}[(i)]{}
  \item it holds that 
  \begin{equation}\label{;eqpkey1}
   \begin{split}
    &\left( 
    e^{\zmu }\sinh x+\beta (e^{\zmu }/\mu ),\,
    Ce^{\zmu }\cosh x+\hbe (e^{\zmu }/\mu )
    \right) \\
    &\eqd 
    \left( 
    \sinh (x+\zmu ),\,C\cosh (x+\zmu )
    \right) , 
   \end{split}
  \end{equation}
  or equivalently, 
  \begin{equation}\label{;eqpkey2}
   \begin{split}
    &\left( 
    e^{\zmu }\sinh x+\beta (e^{\zmu }/\mu ),\,
    \tau _{e^{\zmu }\cosh x}(\hB )+e^{\zmu }/\mu 
    \right) \\
    &\eqd 
    \left( 
    \sinh (x+\zmu ),\,\tau _{\cosh (x+\zmu )}(\hB )
    \right) , 
   \end{split}
  \end{equation}
  where in \eqref{;eqpkey1}, $\zmu $, $\beta $, $\hbe $ and $C$ 
  (resp. $\zmu $ and $C$) are independent on the left-(resp. right-)hand 
  side while in \eqref{;eqpkey2}, $\zmu $, $\beta $ and $\hB $ 
  (resp. $\zmu $ and $\hB $) are independent on the 
  left-(resp. right-)hand side; 
  
  \item it holds that 
  \begin{equation}\label{;eqpkey3}
   \begin{split}
   &\left( 
   e^{\zmu }\sinh x+\beta (e^{\zmu }/\mu),\,e^{\zmu }
   \right) \\
   &\eqd 
   \left( 
   \sinh (x+\zmu ),\,
   \mu \tau _{\cosh (x+\zmu )}(\hB ^{(\mu \cosh x)})
   \right) , 
   \end{split}
  \end{equation}
  where $\beta $ and $\hB $ are independent of $\zmu $. 
 \end{enumerate}
\end{prop}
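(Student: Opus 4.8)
The plan is to derive Proposition~\ref{;pkey} from the basic identity \eqref{;proto1}, namely $\sinh \zmu \eqd \beta(e^{\zmu}/\mu)$, by augmenting it with a second, conditionally independent coordinate built on the same clock $e^{\zmu}/\mu$. First I would work conditionally on $e^{\zmu}=r$ for a fixed $r>0$: on this event the left-hand side of \eqref{;eqpkey1} becomes $\bigl(r\sinh x+\beta(r/\mu),\,Cr\cosh x+\hbe(r/\mu)\bigr)$ with $\beta(r/\mu)$ and $\hbe(r/\mu)$ independent centered Gaussians of variance $r/\mu$, and $C$ an independent Cauchy. The key algebraic fact I would isolate is a fixed-$r$, fixed-$\mu$ version of the statement: if $N_1,N_2$ are i.i.d.\ $\mathcal{N}(0,s)$ and $C$ is an independent standard Cauchy, then for any reals $p,q$,
\begin{align*}
 \bigl( p+N_1,\,Cq+N_2\bigr)\eqd \bigl( p+N_1,\, C'\sqrt{q^2+(p+N_1)^2}\,\bigr)\cdot(\text{something}),
\end{align*}
— more precisely I want to recognize that $Cq+N_2$, conditionally on the first coordinate taking value $w$, is $\mathcal{N}(0,s)$-convolved with $q C$, and to compare this with the conditional law of $C\cosh(x+\zmu)$ given $\sinh(x+\zmu)=w$ on the right-hand side. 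Using $\cosh^2=1+\sinh^2$, the right-hand side second coordinate is $C\sqrt{1+w^2}$ conditionally on the first being $w$; so the whole content of (i) reduces to the one-dimensional claim, integrated against the (common, by \eqref{;proto1}) law of the first coordinate, that $qC + N \eqd \sqrt{q^2+s}\cdot C$ in a suitable sense — but this is exactly the elementary fact that a Gaussian plus an independent scaled Cauchy is again a scaled Cauchy only in the degenerate case, so the correct route is instead to pass through \eqref{;proto1} applied with the second Brownian motion.

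Concretely, the cleaner approach I would actually carry out is: condition on $\zmu$ itself. Given $\zmu=y$, the left-hand side of \eqref{;eqpkey1} is $\bigl(e^{y}\sinh x+\beta(e^{y}/\mu),\,Ce^{y}\cosh x+\hbe(e^{y}/\mu)\bigr)$ with the two Gaussians i.i.d.\ of variance $e^{y}/\mu$. I claim this pair has the same law as $\bigl(\sinh(x+\zmu'),\,C\cosh(x+\zmu')\bigr)$ after one more conditioning — but the point is subtler because the right-hand side $\zmu$ is \emph{not} conditioned. So the real mechanism must be the following two-step one. Step 1: apply \eqref{;proto1d} with the test function $f$ replaced by a bounded measurable function of \emph{two} variables, where the second variable is filled in by running $\hbe$ along the same independent clock; this requires first establishing a bivariate strengthening of \eqref{;proto1}, namely that for a Brownian motion $\beta$ and an independent clock, conditioning produces jointly the right structure. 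Step 2: recognize the Cauchy variable as arising from a ratio or from the first-hitting representation. The device for introducing $C$ is \eqref{;lt}: $\tau_a(\hB)\eqd a^2\tau_1(\hB)$ and $\tau_1(\hB)$ has the stable-$(1/2)$ law, and crucially $\hB_{\tau_1(\hB^{(\cdot)})}$-type constructions produce Cauchy variables — more directly, if $\hbe$ is a Brownian motion independent of a positive variable $T$ then $\hbe(T)/\sqrt{T'}$ with $T'$ a suitable independent copy is Cauchy. Thus I would prove \eqref{;eqpkey2} and \eqref{;eqpkey3} \emph{first}, since those are the "hitting-time" forms not involving $C$, and then obtain \eqref{;eqpkey1} from \eqref{;eqpkey2} by the standard fact that $\hB_{\tau_a(\hB)}$ relates to Cauchy, or rather by the identity $a + N \eqd $ (scaled Cauchy evaluated appropriately); the equivalence "\eqref{;eqpkey1} $\Leftrightarrow$ \eqref{;eqpkey2}" I would justify by the elementary observation that for fixed $a>0$, a standard Cauchy $C$ times $a$ has the same law as $\hB_{\tau_{a^2}(\hB)}/\text{(indep.\ copy)}$ — more cleanly, $aC \eqd$ the value at an independent inverse-Gaussian-type time — so that $Ca\cosh x + \hbe(v)$ being Cauchy-type is equivalent to $\tau_{a\cosh x}(\hB) + v$ appearing as the clock on the right.

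For the heart of the matter: to prove \eqref{;eqpkey2}, I would use \lref{;lkey} (referenced in the excerpt as following from \pref{;phinge}) in the form that handles pairs. The expected shape of that lemma is a two-variable version of \eqref{;eqphinge}: for suitable $g$ on $\R\times(0,\infty)$,
\begin{align*}
 \int_\R dy\, e^{-\mu\cosh y} \ex\bigl[ g(\sinh y,\, \tau_{\cosh y}(\hB))\bigr]
 = 2K_0(\mu)\,\ex\bigl[ g(\beta(e^{\zmu}/\mu),\, \tau_{e^{\zmu}\cosh x}(\hB)\cdot\text{(shift)})\bigr]
\end{align*}
or something morally equivalent after the change of variables $v=e^y/\mu$. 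Then substituting the decomposition $\cosh^2 y = e^{2y} - e^y(e^y-\cosh y)\cdot 2 +\dots$, hmm — rather, I would write $\cosh y = e^{y} - (e^{y}-\cosh y)$ and note $e^{y}-\cosh y = \tfrac12 e^{-y}$, and also $\sinh y = e^{y}-\cosh y$... indeed $\sinh y = e^y - \cosh y$ exactly. So on the right-hand side $\sinh(x+\zmu) = e^{x+\zmu} - \cosh(x+\zmu)$, which couples the two coordinates; and the hitting time $\tau_{\cosh(x+\zmu)}(\hB)$ can be split via the strong Markov property of $\hB$ at the level $e^{\zmu}\cosh x$... no, the levels don't nest like that. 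The genuinely hard part, and the step I expect to be the main obstacle, is establishing the right bivariate enhancement of \eqref{;proto1} — i.e., showing that conditioning $(\sinh\zmu,\,\text{second coordinate})$ on $e^{\zmu}$ makes the second coordinate's conditional law match $C\cosh x \cdot e^{\zmu}$ plus independent Gaussian — because this is exactly where the additivity $\cosh(x+\zmu) = \cosh x\cosh\zmu + \sinh x \sinh\zmu$ must be matched against the convolution structure $Ce^{\zmu}\cosh x + \hbe(e^{\zmu}/\mu)$, and this matching is precisely a restatement of \lref{;lkey} with the Cauchy/hitting-time bookkeeping. Once that conditional identity is in hand, both \eqref{;eqpkey1}/\eqref{;eqpkey2} and \eqref{;eqpkey3} follow by integrating over the law of $e^{\zmu}$, with \eqref{;eqpkey3} differing only in that the second coordinate $e^{\zmu}$ is recovered on the right via the mean-$\mu\cosh x$ drifted Brownian motion hitting time (using $\ex[\tau_a(\hB^{(\nu)})]=a/\nu$), the scaling $\tau_{\cosh(x+\zmu)}(\hB^{(\mu\cosh x)})$, and the now-familiar change of variables.
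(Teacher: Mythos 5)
Your proposal correctly identifies the architecture that the paper in fact uses --- a bivariate extension of \pref{;phinge} in which the addition formula $\cosh (x+y)=\cosh x\cosh y+\sinh x\sinh y$ is matched against the convolution structure of the left-hand sides, followed by integration against the law of $\zmu $ and the bookkeeping identity $\beta (\tau _{a}(\hB ))\eqd aC$ to pass between the Cauchy form and the hitting-time form. But the proposal stops exactly at the step that constitutes the proof: you yourself write that ``establishing the right bivariate enhancement of \eqref{;proto1}'' is ``the main obstacle,'' and you never establish it. That enhancement is \lref{;lkey},
\begin{align*}
 \int _{\R }dy\,e^{-\mu \cosh y}e^{-\la \cosh (x+y)}e^{i\xi \sinh (x+y)}
 =\int _{0}^{\infty }\frac{dv}{v}\,e^{-\frac{1}{2v}}\,
 e^{-\frac{\mu ^{2}+\la ^{2}+\xi ^{2}}{2}v}\,
 e^{-\mu \la v\cosh x}e^{i\mu \xi v\sinh x},
\end{align*}
proved by translating $y$, expanding $\cosh (y-x)$, applying \pref{;phinge} with exponent $\mu \cosh x+\la $ to $f(w)=e^{(\mu \sinh x+i\xi )w}$, and computing the Gaussian exponential moment. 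Your guessed ``expected shape'' of this lemma is not correct as written (it contains an undefined ``shift'' and mismatched variables), and the surrounding manipulations (the claim $e^{y}-\cosh y=\tfrac12 e^{-y}$, the attempt to split $\tau _{\cosh (x+\zmu )}(\hB )$ by the strong Markov property at a non-nested level) are dead ends that you abandon. Once the lemma is in hand, \eqref{;eqpkey1} follows by reading its two sides as joint Fourier transforms, using $\ex [e^{i\al C}]=e^{-|\al |}$ on the left and the change of variables $v=e^{y}/\mu $ on the right; none of this is carried out in the proposal.

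A second, independent gap concerns part \thetag{ii}. The paper's derivation of \eqref{;eqpkey3} is not a corollary of the same conditional identity ``integrated over the law of $e^{\zmu }$'': it requires a Cameron--Martin change of measure to absorb the factor $e^{\mu \sinh x\cdot \beta (v)}$, a second application of \pref{;phinge}, and then the \emph{full} Laplace transform \eqref{;lt} of the drifted hitting time via the rearrangement $\sqrt{\mu ^{2}\cosh ^{2}x+\la ^{2}}\cosh y-\mu \sinh x\sinh y=\mu \cosh (y-x)+\bigl( \sqrt{\mu ^{2}\cosh ^{2}x+\la ^{2}}-\mu \cosh x\bigr) \cosh y$. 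The mean formula $\ex [\tau _{a}(\db{\nu })]=a/\nu $ that you invoke plays no role here (in the paper it only serves as a consistency check on \tref{;tm2}). On the positive side, you correctly diagnose that the naive conditional reduction ``Gaussian plus independent scaled Cauchy is again scaled Cauchy'' fails, and your plan to prove the hitting-time form first and deduce the Cauchy form afterwards is a legitimate reversal of the paper's equivalence argument; but as it stands neither form is proved.
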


\begin{proof}[Proof of \tsref{;tm1} and \ref{;tm2}]
 Assertions of \tsref{;tm1} and \ref{;tm2} follow immediately 
 from \thetag{i} and \thetag{ii} of \pref{;pkey}, 
 respectively, since it holds that by \pref{;pparti},  
 \begin{align*}
  \int _{0}^{\infty }
  \pr \left( 1/Z_{\tau }\in d\mu \right) 
  \ex \!\left[ 
  f(\zmu ,e^{\zmu }/\mu ,1/\mu )
  \right] 
  =\ex \!\left[ 
  f(B_{\tau },A_{\tau },Z_{\tau })
  \right] 
 \end{align*}
 for any bounded measurable function 
 $f:\R \times (0,\infty )^{2}\to \R $. 
\end{proof}

\begin{rem}\label{;rlaws}
 If we denote by $N$ a one-dimensional standard Gaussian random 
 variable independent of $\gig{0}{1}{\mu }$, then by \eqref{;idengig}, 
 the first coordinate in the left-hand side of \eqref{;eqpkey1} 
 is identical in law with 
 \begin{align}\label{;mixture}
  \mu \sinh x\cdot \gig{0}{1}{\mu }+\sqrt{\gig{0}{1}{\mu }}N. 
 \end{align}
 If we replace $\gig{0}{1}{\mu }$ by a generic nonnegative 
 random variable $X$ and $\mu \sinh x$ by a generic constant, 
 then by adding an additional constant, the operation as in 
 \eqref{;mixture} of producing a new probability distribution 
 is often called the normal mean-variance mixture 
 with mixing law $X$ in the literature. For instance, in the final section 
 of \cite{bn77}, Barndorff-Nielsen uses $X=\gig{\nu }{a}{b}$ as 
 the mixing law to introduce briefly what is referred to as the generalized 
 hyperbolic distribution; see also \cite{jor} and \cite{ses}. 
 Although it is not stated in an explicit form as the identity between 
 first coordinates in \eqref{;eqpkey1}, a connection between the generalized 
 hyperbolic and hyperbola distributions is discussed later by Barndorff-Nielsen 
 \cite[Section~5]{bn78}. On the other hand, the identity between 
 second coordinates in \eqref{;eqpkey1}, as well as in \eqref{;eqpkey2}, 
 and that in \eqref{;eqpkey3} seem to be new to our knowledge; 
 in particular, the latter identity 
 reveals a connection between the hyperbola and inverse 
 Gaussian distributions. Moreover, the identity \eqref{;eqpkey3} 
 enables us to obtain a certain relationship among the laws of 
 $\sinh (x+\zmu )$ for different values of $x$; see \rref{;rpkey}\,\thetag{4}. 
 As for the notions of the hyperbola and inverse Gaussian distributions, 
 recall their description given just before 
 \pref{;pconvlaw}. 
\end{rem}

Before providing a proof of \pref{;pkey}, we explain 
how to deduce \pref{;pparti}, and provide related 
facts. 

\begin{rem}\label{;rpparti}
\thetag{1} First observe that 
$\lim \limits_{t\to \infty }A_{t}=\infty $ a.s.; indeed, by 
Lamperti's well-known relation (see, e.g., 
\cite[Chapter~XI, Exercise~\thetag{1.28}]{ry}), there exists 
a two-dimensional Bessel process $R=\{ R(t)\} _{t\ge 0}$ 
starting from 1, such that 
\begin{align*}
 \eb{t}=R(A_{t}),\quad t\ge 0, 
\end{align*}
from which absurdity of  
$\pr \left( \lim \limits_{t\to \infty }A_{t}<\infty \right) >0$ 
follows since this positivity should imply existence of 
$\lim \limits_{t\to \infty }B_{t}$ 
with positive probability due to sample path continuity of $R$. 
Therefore we have the relation 
\begin{align}\label{;zarel}
 \int _{t}^{\infty }\frac{ds}{Z_{s}^{2}}
 =\left[ 
 -\frac{1}{A_{s}}
 \right] _{t}^{\infty }=\frac{1}{A_{t}}
\end{align}
for all $t>0$ a.s., and hence 
\begin{align}\label{;ezrel}
 \eb{t}=\left( Z_{t}\int _{t}^{\infty }\frac{ds}{Z_{s}^{2}}\right) ^{-1},
 \quad \text{$t>0$, a.s.}
\end{align}
The former relation \eqref{;zarel} reveals that the process $Z$ is transient: 
$\lim \limits_{t\to \infty }Z_{t}=\infty $ a.s. 
We also recall from \cite[Theorem~1.6]{myPI} that $Z$ 
is a diffusion process in its natural filtration with the infinitesimal 
generator 
\begin{align}\label{;genz}
 \frac{1}{2}z^{2}\frac{d^{2}}{dz^{2}}+
 \left\{ 
 \frac{1}{2}z+\frac{K_{1}}{K_{0}}\left( 
 \frac{1}{z}
 \right) 
 \right\} \frac{d}{dz}. 
\end{align}
If we denote by $(Z=\{ Z_{t}\} _{t\ge 0},\{ P_{z}\} _{z\ge 0})$ 
the strong Markov family associated with \eqref{;genz} 
so that $P_{z}(Z_{0}=z)=1$, then 
Proposition~1.7 of \cite{myPI} referred to at the beginning of this 
subsection tells us that 
for every $z>0$, the functional 
\begin{align}\label{;zftnal}
 \int _{0}^{\infty }\frac{ds}{Z_{s}^{2}}
\end{align}
is distributed, under the probability measure $P_{z}$, as 
$1/\gig{0}{1}{1/z}$. Combining this fact with the expression 
\eqref{;ezrel} of geometric Brownian motion in terms of $Z$, 
leads to the statement of \pref{;pparti} in such a way that 
for $\mu >0$ and $u>0$, 
\begin{align*}
 \pr \left( 
 \eb{\tau }\in du \big| Z_{\tau }=1/\mu 
 \right) 
 &=P_{1/\mu }\biggl( 
 \mu \left( \int _{0}^{\infty }\frac{ds}{Z_{s}^{2}}\right) ^{-1}\in du
 \biggr) \\
 &=\pr \left( \mu \gig{0}{1}{\mu }\in du\right) . 
\end{align*}

\noindent 
\thetag{2} 
It would also be worthwhile mentioning that given $\mu >0$, 
by considering the process $\{ 1/Z_{t}\} _{t\ge 0}$ under 
$P_{1/\mu }$, the functional \eqref{;zftnal} is seen to be identical in law 
with the first hitting time to $0$ by a diffusion process starting from $\mu $ 
whose infinitesimal generator is given by 
\begin{align*}
 \frac{1}{2}\frac{d^{2}}{dz^{2}}
 +\left\{ 
 \frac{1}{2z}-\frac{K_{1}}{K_{0}}(z)
 \right\} \frac{d}{dz}. 
\end{align*}
A direct computation shows that for each fixed 
 $\la \in \R $, the function 
 \begin{align}\label{;eigenf}
  E_{1/\mu }\left [ 
  \exp \left(
  -\frac{\la ^{2}}{2}\int _{0}^{\infty }\frac{ds}{Z_{s}^{2}}
  \right) 
  \right] 
  =\frac{K_{0}\bigl( \mu \sqrt{1+\la ^{2}}\,\bigr) }
  {K_{0}(\mu )},\quad 
  \mu >0, 
\end{align}
solves the eigenvalue problem 
\begin{align*}
 f''(\mu )+\left\{ 
 \frac{1}{\mu }-2\frac{K_{1}}{K_{0}}(\mu )
 \right\} f'(\mu )=\la ^{2}f(\mu )
\end{align*}
associated with the above second-order differential operator. 
We refer to a more general fact by Barndorff-Nielsen 
et al.\ \cite{bbh} that any $\GIG{\nu }{a}{b}$ distribution 
with nonpositive $\nu $ is realized as the law of 
a first hitting time of some diffusion process. 
The above representation \eqref{;eigenf} follows 
readily from \eqref{;k0}. 

\noindent 
\thetag{3} We recall from \cite[Proposition~2]{yor92} 
(see also \cite[p.~43]{yorm}) that 
whenever $t>0$, the joint law of $\eb{t}$ and $A_{t}$ is given by 
\begin{align}\label{;jl}
 \pr \left( \eb{t}\in du,\,A_{t}\in dv\right) 
 =\frac{1}{uv}\exp \left( -\frac{1+u^{2}}{2v}\right) 
 \Theta _{u/v}(t)\,dudv, \quad u,v>0, 
\end{align}
where for every $r>0$, the function $\Theta _{r}(t),\,t>0$, 
is an unnormalized density of the so-called 
Hartman--Watson law with parameter $r$, which is  
characterized by 
\begin{align}\label{;ltt}
 \int _{0}^{\infty }dt\,e^{-\la ^{2}t/2}\Theta _{r}(t)
 =I_{|\la |}(r),\quad \la \in \R . 
\end{align}
Here for every $\nu \in \R $, 
the function $I_{\nu }$ denotes the modified Bessel function 
of the first kind of order $\nu $; see \cite[Section~5.7]{leb} 
for definition. As for the formula \eqref{;jl}, we also 
refer to \cite[Theorem~4.1]{mySI} as well as 
\cite[Theorem~7.5.1]{mt} for its different proof than 
the original one, based on the Sturm--Liouville theory. 
The following explicit representation 
for $\Theta _{r}(t)$ is shown in \cite{yor80}: 
\begin{align}\label{;hw}
 &\Theta _{r}(t)=\frac{r}{\sqrt{2\pi ^3t}}
 \int _{0}^\infty dy\,\exp \left( 
 \frac{\pi ^2-y^2}{2t}
 \right) \exp \left( 
 -r\cosh y
 \right) \sinh y\sin \left( \frac{\pi y}{t}\right) . 
\end{align}
Owing to \eqref{;jl}, we find that 
\begin{align}
 \pr \left( A_{t}\in dv,\,1/Z_{t}\in d\mu \right) 
 =\frac{1}{\mu }\Theta _{\mu }(t)\frac{1}{v}\exp \left( 
 -\frac{1}{2v}
 \right) \exp \left( 
 -\frac{\mu ^{2}}{2}v
 \right) dvd\mu ,\quad v,\mu >0, \label{;jlaz}
\end{align}
and that in particular, 
\begin{align}\label{;izlaw}
 \pr \left( 1/Z_{t}\in d\mu \right) 
 =\frac{2}{\mu }K_{0}(\mu )\Theta _{\mu }(t)\,d\mu ,\quad \mu >0. 
\end{align}
Indeed, for every $s>0$ and $\mu >0$, 
\begin{equation}
 \begin{split}\label{;eqjl}
 \pr \left( A_{t}\le s,\,1/Z_{t}\le \mu \right) 
 &=\int _{0}^{s}\frac{dv}{v}\int _{0}^{\mu v}\frac{du}{u}
 \exp \left( -\frac{1+u^{2}}{2v}\right) \Theta _{u/v}(t) \\
 &=\int _{0}^{\mu }\frac{du}{u}\,\Theta _{u}(t)\int _{0}^{s}\frac{dv}{v}
 \exp \left( -\frac{1}{2v}\right) \exp \left( -\frac{u^{2}}{2}v\right) , 
 \end{split}
\end{equation}
and letting $s\to \infty $ yields 
\begin{align*}
 \pr \left( 1/Z_{t}\le \mu \right) 
 =2\int _{0}^{\mu }\frac{du}{u}\Theta _{u}(t)K_{0}(u)
\end{align*}
thanks to \eqref{;k0}, where  
for the second line in \eqref{;eqjl}, we changed the variable 
$u$ into $vu$ and used Fubini's theorem. From \eqref{;jlaz} and 
\eqref{;izlaw}, we see that 
\begin{align*}
 \pr \left( 
 A_{t}\in dv\big| 1/Z_{t}=\mu 
 \right) 
 =\frac{1}{2K_{0}(\mu )}\frac{1}{v}\exp \left( -\frac{1}{2v}\right) 
 \exp \left( -\frac{\mu ^{2}}{2}v\right) dv,\quad v>0,\,\mu >0, 
\end{align*}
which is nothing but the case $\tau =t$ in \pref{;pparti} because 
conditionally on $1/Z_{t}=\mu $, we have the expression 
$A_{t}=\eb{t}/\mu $. 
\end{rem}

We proceed to the proof of \pref{;pkey}. In what follows 
the symbol $i$ stands for $\sqrt{-1}$. 

\begin{lem}\label{;lkey}
 For every $\mu >0$, $\la \ge 0$ and $\xi \in \R $, 
 it holds that for all $x\in \R $, 
 \begin{equation}
  \begin{split}\label{;eqlkey}
   &\int _{\R }dy\,e^{-\mu \cosh y}e^{-\la \cosh (x+y)}e^{i\xi \sinh (x+y)}\\
   &=\int _{0}^{\infty }\frac{dv}{v}\,\exp \left( -\frac{1}{2v}\right) 
   \exp \left( -\frac{\mu ^{2}+\la ^{2}+\xi ^{2}}{2}v \right) 
   e^{-\mu \la v\cosh x}e^{i\mu \xi v\sinh x}. 
  \end{split}
 \end{equation}
\end{lem}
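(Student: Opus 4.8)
The plan is to reduce the left-hand side to a form where Proposition~1.1 applies, by absorbing the factor $e^{-\la \cosh (x+y)}e^{i\xi \sinh (x+y)}$ into a suitable exponential of a single hyperbolic cosine after a change of variable that shifts $y$ by $x$. First I would substitute $z=x+y$ in the integral on the left, so that it becomes $\int_{\R}dz\,e^{-\mu\cosh(z-x)}e^{-\la\cosh z}e^{i\xi\sinh z}$. Expanding $\cosh(z-x)=\cosh z\cosh x-\sinh z\sinh x$ and regrouping, the integrand is $\exp\!\bigl(-(\mu\cosh x+\la)\cosh z+(\mu\sinh x+i\xi)\sinh z\bigr)$; thus the whole left-hand side has the shape $\int_{\R}dz\,e^{-p\cosh z+q\sinh z}$ with $p=\mu\cosh x+\la$ and $q=\mu\sinh x+i\xi$. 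The key algebraic observation is that $p\cosh z-q\sinh z$ can be rewritten as $\sqrt{p^2-q^2}\,\cosh(z-w)$ for an appropriate (possibly complex) shift $w$ with $\cosh w=p/\sqrt{p^2-q^2}$, $\sinh w=q/\sqrt{p^2-q^2}$, provided $p^2-q^2$ avoids the branch cut; here $p^2-q^2=(\mu\cosh x+\la)^2-(\mu\sinh x+i\xi)^2=\mu^2+\la^2+\xi^2+2\mu\la\cosh x-2i\mu\xi\sinh x$, which has positive real part since $\la,\mu\ge 0$, so a principal square root is well defined.

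Having written the left-hand side as $\int_{\R}dz\,e^{-\sqrt{p^2-q^2}\,\cosh(z-w)}$, I would shift $z\mapsto z+w$ (justifying the contour shift by analyticity and the decay of the integrand, since $\operatorname{Re}\sqrt{p^2-q^2}>0$) to obtain $\int_{\R}dz\,e^{-\sqrt{p^2-q^2}\,\cosh z}=2K_0\!\bigl(\sqrt{p^2-q^2}\,\bigr)$ by \eqref{;intk1}. Now I would invoke the representation \eqref{;k0} of $K_0$, valid (by analytic continuation in the argument, using $\operatorname{Re}\sqrt{p^2-q^2}>0$) to write
\begin{align*}
 2K_0\!\bigl(\sqrt{p^2-q^2}\,\bigr)
 =\int_0^\infty\frac{dv}{v}\,\exp\!\left(-\frac{1}{2v}\right)\exp\!\left(-\frac{p^2-q^2}{2}v\right).
\end{align*}
Finally, substituting back $p^2-q^2=\mu^2+\la^2+\xi^2+2\mu\la\cosh x-2i\mu\xi\sinh x$ and splitting the exponential gives exactly the right-hand side of \eqref{;eqlkey}. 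Alternatively, and perhaps more cleanly, one can avoid the complex shift entirely: apply Proposition~1.1 (or rather the identity \eqref{;eqphinge} together with the Fubini computation in its proof) after first rewriting $e^{-\la\cosh(x+y)}$ via $e^{-\la\cosh(x+y)}=\ex[e^{-\la^2\tau_{\cosh(x+y)}(B)/2}]$ as in the proof of Proposition~1.1, but it is shorter to keep the hitting-time representation only for the $e^{-\mu\cosh y}$ factor and treat the remaining bounded factor $e^{-\la\cosh(x+y)}e^{i\xi\sinh(x+y)}$ directly: with $\cosh y=\sqrt{1+\sinh^2 y}$ one substitutes $y=\argsh s$ and recognizes the Gaussian kernel $\frac{1}{\sqrt{2\pi v}}e^{-s^2/(2v)}$, reducing the $s$-integral to $\ex\bigl[e^{-\la\cosh(x+\argsh\beta(v))}e^{i\xi\sinh(x+\argsh\beta(v))}\bigr]$ and then using $\sinh(x+\argsh s)=s\cosh x+\sqrt{1+s^2}\sinh x$, $\cosh(x+\argsh s)=\sqrt{1+s^2}\cosh x+s\sinh x$.

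The main obstacle is the justification of the complex contour shift (equivalently, the analytic continuation of the $K_0$ integral representation to complex argument with positive real part): one must check that the integrand $e^{-\sqrt{p^2-q^2}\,\cosh(z-w)}$ decays as $\operatorname{Re}z\to\pm\infty$ along the shifted contour and that no singularity is crossed, which follows from $\operatorname{Re}\sqrt{p^2-q^2}>0$ and boundedness of $\operatorname{Im}w$, but should be stated carefully. If one prefers to sidestep this, the second route above is entirely real-variable: the only subtlety there is interchanging the $v$-integral (coming from the law \eqref{;taulaw} of $\tau_{\cosh y}(B)$) with the $y$-integral, which is legitimate by Fubini's theorem since $|e^{-\la\cosh(x+y)}e^{i\xi\sinh(x+y)}|\le 1$ and $\int_{\R}dy\,e^{-\mu\cosh y}<\infty$. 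Either way, once the reduction to $K_0$ of the right argument is in place, the remaining steps are the routine substitutions already used repeatedly in Subsection~2.1.
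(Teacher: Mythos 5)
Your main route is correct in substance and shares its opening moves with the paper's proof: both translate the variable and use the addition formula for $\cosh$ to bring the left-hand side to $\int _{\R }dz\,e^{-p\cosh z+q\sinh z}$ with $p=\mu \cosh x+\la $ and $q=\mu \sinh x+i\xi $. From there the paper is more economical: it applies \pref{;phinge} with the exponent $\mu $ replaced by $p$ to the complex-valued function $f(s)=e^{qs}$ (admissible, since $p\ge \mu \cosh x>\mu \left| \sinh x\right| $ guarantees the integrability hypothesis) and evaluates the Gaussian expectation $\ex \!\left[ e^{q\beta (v)}\right] =e^{q^{2}v/2}$, which directly produces the factor $e^{-(p^{2}-q^{2})v/2}$ in the $v$-integral; no contour deformation or analytic continuation is needed. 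Your completion of the hyperbolic square, reduction to $2K_{0}\bigl( \sqrt{p^{2}-q^{2}}\,\bigr) $ and continuation of \eqref{;k0} computes exactly the same quantity; the contour shift can indeed be justified (since $\Re (p\pm q)>0$, the relevant arguments stay in $(-\pi /2,\pi /2)$ on the vertical sides of the rectangle), or one may instead fix $p$ and continue analytically in $q$ from real $q\in (-p,p)$. So your primary argument is sound, just heavier than the paper's, which buys the same conclusion with one application of \pref{;phinge} and a one-line Gaussian computation.

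One caveat: your ``entirely real-variable'' alternative does not close as stated. Applying \pref{;phinge} with exponent $\mu $ to the bounded function $f(s)=e^{-\la \cosh (x+\argsh s)}e^{i\xi \sinh (x+\argsh s)}$ yields $\int _{0}^{\infty }\frac{dv}{v}\,e^{-1/(2v)}e^{-\mu ^{2}v/2}\,\ex \!\left[ f(\beta (v))\right] $, in which $\ex \!\left[ f(\beta (v))\right] $ is independent of $\mu $, whereas the target integrand carries the $\mu $-dependent factors $e^{-\mu \la v\cosh x}e^{i\mu \xi v\sinh x}$. The two $v$-integrals are equal, but their integrands are not, so this version cannot terminate by matching integrands and would require substantial extra work. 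The missing idea is precisely the tilt the paper performs: keep $e^{-\la \cosh (x+y)}$ and the factor $e^{\mu \sinh x\sinh y}$ attached to the exponent, so that \pref{;phinge} is invoked with exponent $\mu \cosh x+\la $ rather than $\mu $.
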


\begin{proof}
 By translation, the left-hand side of \eqref{;eqlkey} is equal to 
 \begin{align*}
  \int _{\R }dy\,e^{-\mu \cosh (y-x)}e^{-\la \cosh y}e^{i\xi \sinh y}, 
 \end{align*}
 which is rewritten, by the relation 
 $\cosh (y-x)=\cosh y\cosh x-\sinh y\sinh x$, as 
 \begin{align*}
  \int _{\R }dy\,e^{-(\mu \cosh x+\la )\cosh y}e^{\mu \sinh x \sinh y}e^{i\xi \sinh y}. 
 \end{align*}
 Applying \pref{;phinge} with exponent $\mu $ therein replaced by $\mu \cosh x+\la $, 
 we see that the last expression is further rewritten as 
 \begin{align*}
  \int _{0}^{\infty }\frac{dv}{v}\,\exp \left( -\frac{1}{2v} \right) 
  \exp \left\{ -\frac{(\mu \cosh x+\la )^{2}}{2}v \right\} 
  \ex \!\left[ 
  \exp \left\{ (\mu \sinh x+i\xi ) \beta (v)\right\}
  \right] . 
 \end{align*}
 As the expectation in this integral is equal to 
 \begin{align*}
  \exp \left\{ \frac{(\mu \sinh x+i\xi )^{2}}{2}v \right\}
  =\exp \left( \frac{\mu ^{2}\sinh ^{2}x-\xi ^{2}}{2}v \right) 
  e^{i\mu \xi v\sinh x}, 
 \end{align*} 
 rearranging terms in the integral leads to the right-hand side of \eqref{;eqlkey} and 
 ends the proof. 
\end{proof}

We are in a position to prove \pref{;pkey}: as the proof shows, 
the assertion \thetag{i} is a rewriting of \eqref{;eqlkey} in terms 
of random variables. 

\begin{proof}[Proof of \pref{;pkey}]
 First we prove \thetag{i} using \lref{;lkey} prepared above. 
 To this end, we replace $\la \ge 0$ in \eqref{;eqlkey} by $|\la |$ for an arbitrary 
 $\la \in \R $. In view of \eqref{;zlaw}, we may rewrite the left-hand side of 
 \eqref{;eqlkey} as 
 \begin{align}
  &2K_{0}(\mu )\ex \!\left[ 
  \exp \left\{ -|\la |\cosh (x+\zmu ) \right\} 
  \exp \left\{ i\xi \sinh (x+\zmu ) \right\}
  \right] \notag \\
  &=2K_{0}(\mu )\ex \!\left[ 
  \exp \left\{ i\la C\cosh (x+\zmu ) \right\} 
  \exp \left\{ i\xi \sinh (x+\zmu ) \right\}
  \right] . \label{;leqlkey}
 \end{align}
 Here $C$ is independent of $\zmu $ and we have used the fact 
 that $\ex [\exp (i\al C)]=\exp (-|\al |)$ for any $\al \in \R $. 
 On the other hand, by changing the variables 
 with $v=e^{y}/\mu ,\,y\in \R $, the right-hand side of 
 \eqref{;eqlkey} with $\la $ replaced by $|\la |$, is rewritten as 
 \begin{align}
  &\int _{\R }dy\,e^{-\mu \cosh y}
  \times \exp \left( -\frac{\la ^{2}}{2}\frac{e^{y}}{\mu } -|\la |e^{y}\cosh x\right) 
  \times \exp \left( -\frac{\xi ^{2}}{2}\frac{e^{y}}{\mu } +i\xi e^{y}\sinh x\right) 
  \notag \\
  &=:\int _{\R }dy\,e^{-\mu \cosh y}\times I\times I\!I, \ \text{say}. \label{;reqlkey}
 \end{align}
 Denoting by $\beta =\{ \beta (t)\} _{t\ge 0}$ a Brownian motion 
 independent of $C$, we have 
 \begin{align*}
  I&=\ex \!\left[ 
  \exp \left\{ i\la \left( Ce^{y}\cosh x+\beta (e^{y}/\mu ) \right) \right\}
  \right] \\
 \intertext{and likewise}
  I\!I&=\ex \!\left[ 
  \exp \left\{ i\xi \left( e^{y}\sinh x+\beta (e^{y}/\mu ) \right) \right\}
  \right] . 
 \end{align*}
 Plugging these into \eqref{;reqlkey} and noting 
 \eqref{;zlaw} again, we conclude that the righ-hand side of \eqref{;eqlkey} 
 admits the expression 
 \begin{align*}
  2K_{0}(\mu )\ex \!\left[ 
  \exp \bigl\{ 
  i\la \bigl( Ce^{\zmu }\cosh x+\hbe (e^{\zmu }/\mu ) \bigr) 
  \bigr\}
  \exp \left\{ 
  i\xi \left( e^{\zmu }\sinh x+\beta (e^{\zmu }/\mu ) \right) 
  \right\} 
  \right] , 
 \end{align*}
 where $\hbe =\{ \hbe (t)\} _{t\ge 0}$ is another Brownian 
 motion, and $\zmu $, $\beta $, $\hbe $ and $C$ are 
 independent. Since the last expression agrees with 
 \eqref{;leqlkey} for any $\la ,\xi \in \R $, we obtain  
 the identity \eqref{;eqpkey1} by successive use of the injectivity 
 of Fourier transform. 
 
 To prove the equivalent expression 
 \eqref{;eqpkey2}, it suffices to note the fact that for two 
 independent Brownian motions $\beta $ and 
 $\hB =\{ \hB _{t}\} _{t\ge 0}$, there holds the identity in law 
 $\beta (\tau _{a}(\hB ))\eqd aC$ for every $a\in \R $: 
 \begin{equation}\label{;cauchy}
  \begin{split}
   \ex \!\left[ 
   \exp \left\{ i\xi  \beta (\tau _{a}(\hB ))\right\}
   \right] 
   &=\ex \!\left[ 
   \exp \left\{  -\frac{\xi ^{2}}{2}\tau _{a}(\hB )\right\}
   \right] \\
   &=\exp (-\left| \xi \right| \!\left| a\right| ), 
  \end{split}
 \end{equation}
 where $\xi \in \R $ is arbitrary and the second line is due to 
 \eqref{;lt}. 
 Using this well-known fact, we may rephrase the identity 
 \eqref{;eqpkey1} as 
 \begin{align*}
  \left( 
  e^{\zmu }\sinh x+\beta (e^{\zmu }/\mu ),\,
  \hbe \bigl( 
  \tau _{e^{\zmu }\cosh x}(\hB )+e^{\zmu }/\mu 
  \bigr) 
  \right) 
  \eqd 
  \left( 
  \sinh (x+\zmu ),\,
  \hbe \bigl( 
  \tau _{\cosh (x+\zmu )}(\hB )
  \bigr) 
  \right) , 
 \end{align*}
 where $\hbe =\{ \hbe (t)\} _{t\ge 0}$ is a Brownian motion 
 independent of $\zmu $, $\beta $ and $\hB $. Taking the 
 joint Fourier transform on both sides leads to coincidence 
 of joint Fourier--Laplace transforms in such a way that for 
 any $\la ,\xi \in \R $, 
 \begin{align*}
  &\ex \!\left[ 
  \exp \left\{ 
  i\la \left( e^{\zmu }\sinh x+\beta (e^{\zmu }/\mu )\right) 
  \right\} 
  \exp \left\{ 
  -\frac{\xi ^{2}}{2}\left( 
  \tau _{e^{\zmu }\cosh x}(\hB )+\frac{e^{\zmu }}{\mu} 
  \right) 
  \right\} 
  \right] \\
  &=\ex \!\left[ 
  \exp \left\{ i\la \sinh (x+\zmu )\right\} 
  \exp \left\{ 
  -\frac{\xi ^{2}}{2}
  \tau _{\cosh (x+\zmu )}(\hB )
  \right\} 
  \right] , 
 \end{align*}
 which proves the identity \eqref{;eqpkey2} owing to the 
 injectivity of Fourier and Laplace transforms. From the above 
 argument it is obvious that \eqref{;eqpkey2} implies \eqref{;eqpkey1} 
 conversely. 
 
  We turn to the proof of \thetag{ii}. Let $f:\R \to \R $ be bounded 
 and measurable, 
 and take $\la \in \R $ arbitrarily. For a Brownian motion 
 $\beta =\{ \beta (t)\} _{t\ge 0}$ independent of $\zmu $, we compute 
 \begin{equation}\label{;lastexpr}
  \begin{split}
  &2K_{0}(\mu )\ex \!\left[ 
  f\bigl( e^{\zmu }\sinh x+\beta (e^{\zmu }/\mu )\bigr) 
  \exp \left( -\frac{\la ^{2}}{2}\frac{e^{\zmu }}{\mu } \right)
  \right] \\
  &=\int _{\R }dy\,e^{-\mu \cosh y}\exp \left(  
  -\frac{\la ^{2}}{2}\frac{e^{y}}{\mu }\right) 
  \ex \!\left[ 
  f\bigl( e^{y}\sinh x+\beta (e^{y}/\mu )\bigr) 
  \right] \\
  &=\int _{0}^{\infty }\frac{dv}{v}\,
  \exp \left( -\frac{1}{2v}\right) 
  \exp \left( -\frac{\mu ^{2}\cosh ^{2}x+\la ^{2}}{2}v \right) \\
  &\hspace{60.5mm}\times 
  \ex \!\left[ f(\beta (v))e^{\mu \sinh x\cdot \beta (v)}\right] \\
  &=\int _{\R }dy\,
  e^{
  -(\sqrt{\mu ^{2}\cosh ^{2}x+\la ^{2}}\cosh y-\mu \sinh x \sinh y) 
  }f(\sinh y), 
  \end{split}
 \end{equation}
 where we used the change of the variables with $e^{y}/\mu =v$ 
 as well as the Cameron--Martin formula applied to 
 $\mu \sinh x\cdot v+\beta (v)$ for the 
 second equality and \pref{;phinge} for the third. 
 Thanks to \eqref{;lt}, we may rewrite 
 \begin{align*}
  &e^{
  -(\sqrt{\mu ^{2}\cosh ^{2}x+\la ^{2}}\cosh y-\mu \sinh x \sinh y) 
  }\\
  &=e^{-\mu \cosh (y-x)}
  e^{-(\sqrt{\mu ^{2}\cosh ^{2}x+\la ^{2}}-\mu \cosh x)
  \cosh y}\\
  &=e^{-\mu \cosh (y-x)}
  \ex \!\left[ 
  \exp \left\{ -\frac{\la ^{2}}{2}\tau _{\cosh y}(\db{\mu \cosh x})
  \right\} 
  \right] . 
 \end{align*}
 We plug the last expression into the rightmost side 
 of \eqref{;lastexpr}. After translating the variable $y$ by $x$,  
 we use Fubini's theorem to arrive at the identity 
 \begin{align*}
  &\ex \!\left[ 
  f\bigl( e^{\zmu }\sinh x+\beta (e^{\zmu }/\mu )\bigr) 
  \exp \left( -\frac{\la ^{2}}{2}\frac{e^{\zmu }}{\mu } \right)
  \right] \\
  &=\ex \!\left[ 
  f(\sinh (x+\zmu ))
  \exp \left\{ -\frac{\la ^{2}}{2}
  \tau _{\cosh (x+\zmu )} (\hB ^{(\mu \cosh x)})
  \right\} 
  \right] . 
 \end{align*}
 Here on the right-hand side, $\hB $ denotes a Brownian motion independent 
 of $\zmu $. As $f$ and $\la $ are arbitrary, we obtain the 
 identity \eqref{;eqpkey3} as claimed. Proof of \pref{;pkey} 
 is complete. 
\end{proof}

We close this subsection by listing some facts deduced from \pref{;pkey}. 

\begin{rem}\label{;rpkey}
\thetag{1} If we let $\mu \to \infty $, the identity \eqref{;eqpkey1} 
remains valid in the sense that as 
two-dimensional random variables, both sides of \eqref{;eqpkey1} 
converge in law to $(\sinh x,C\cosh x)$ by \pref{;pconvlaw}. 

\noindent 
\thetag{2} It follows readily from \pref{;pconvlaw} that 
$\sinh \zmu $ converges in law to $0$ 
as $\mu \to \infty $, which may be strengthened as 
convergence in law of $\sqrt{\mu }\sinh \zmu $ to the standard normal 
distribution. Indeed, by the scaling property of Brownian motion and by 
\eqref{;eqpkey1} (or \eqref{;proto1}), 
\begin{align*}
 \beta (e^{\zmu })&\eqd \sqrt{\mu }\beta (e^{\zmu }/\mu )\\
 &\eqd \sqrt{\mu }\sinh \zmu , 
\end{align*}
the leftmost side of which converges in law to $\beta (1)$ as 
$\mu \to \infty $ thanks to \pref{;pconvlaw}. We may 
rephrase this convergence in law as that of 
$
(\sqrt{\mu }/2)\bigl( 
\gig{0}{\sqrt{\mu }}{\sqrt{\mu }}-1/\gig{0}{\sqrt{\mu }}{\sqrt{\mu }}
\bigr) 
$ in view of \eqref{;idengig}. 
The same convergence also holds true for 
any $\GIG{\nu }{\sqrt{\mu }}{\sqrt{\mu }}$ laws; in fact, a simple change 
of the variables shows that for every $\nu \in \R $ and for any bounded measurable function $f$ on $\R $, 
\begin{align*}
 &\ex \Bigl[ 
 f\Bigl( 
 (\sqrt{\mu }/2)\bigl( 
\gig{\nu }{\sqrt{\mu }}{\sqrt{\mu }}-1/\gig{\nu }{\sqrt{\mu }}{\sqrt{\mu }}
\bigr) 
 \Bigr) 
 \Bigr] \\
 &=\frac{1}{2\sqrt{\mu }e^{\mu }K_{\nu }(\mu )}
 \int _{\R }\frac{dx}{\sqrt{1+x^{2}/\mu }}\,
 \left( 
 \frac{x}{\sqrt{\mu }}
 +\sqrt{1+\frac{x^{2}}{\mu }}
 \right) ^{\nu }
 \!\exp \left( 
 -\frac{x^{2}}{1+\sqrt{1+x^{2}/\mu }}
 \right) \!f(x), 
\end{align*}
which converges to $\ex [f(\beta (1))]$ 
as $\mu \to \infty $ due to \eqref{;klim}. 
In \cite{jor}, a similar computation reveals the convergence in law of  
$
\sqrt{\mu }\bigl( \gig{\nu }{\sqrt{\mu }}{\sqrt{\mu }}-1\bigr) 
$
to the standard normal distribution, 
from which the same convergence of 
$\sqrt{\mu }\log \bigl( \gig{\nu }{\sqrt{\mu }}{\sqrt{\mu }}\bigr) $ 
is also deduced; see Equations~\thetag{3.10} and \thetag{3.11} 
of the above-cited reference. 

\noindent 
\thetag{3} As for each side of the identity \eqref{;eqpkey1}, 
if we divide the second coordinate by the square root of the sum of 
1 and square of the first coordinate, then we obtain the 
identity in law: 
\begin{align*}
 \frac{
 Ce^{z_{\mu }}\cosh x+\hbe (e^{z_{\mu }}/\mu )
 }
 {
 \sqrt{
 1+\left( 
 e^{z_{\mu }}\sinh x+\beta (e^{z_{\mu }}/\mu )
 \right) ^{2}
 }
 }
 \eqd C, 
\end{align*}
which holds for every fixed $\mu >0$ and $x\in \R $. 
The above identity may also be deduced from \eqref{;invcauchy} 
by taking $\tau =\tau _{1/\mu }(Z)$, which is finite a.s.\ 
because of the transience of $Z$ noted in 
\rref{;rpparti}\,\thetag{1}.  

\noindent 
\thetag{4} For any $x,y\in \R $, there holds the identity in law 
\begin{align*}
 \sinh (y+\zmu )
 \eqd 
 \sinh (x+\zmu )+\mu (\sinh y-\sinh x)
 \tau _{\cosh (x+\zmu )}(\hB ^{(\mu \cosh x)})
\end{align*}
with $\hB $ a Brownian motion independent of $\zmu $ 
on the right-hand side; indeed, 
\begin{align*}
 \sinh (y+\zmu )&\eqd e^{\zmu }\sinh y+\beta (e^{\zmu }/\mu )\\
 &=e^{\zmu }(\sinh y-\sinh x)+e^{\zmu }\sinh x+\beta (e^{\zmu }/\mu )\\
 &\eqd \mu \tau _{\cosh (x+\zmu )}(\hB ^{(\mu \cosh x)})
 \cdot (\sinh y-\sinh x)+\sinh (x+\zmu ), 
\end{align*}
where the third line is due to \eqref{;eqpkey3}.  
\end{rem}

\subsection{Proof of \tref{;tm3}}\label{;prftm3}

In this subsection we prove \tref{;tm3}, whose assertion 
is nothing but a rewriting of the following one: 
as in the statement of the theorem, $\ve $ denotes a 
Rademacher variable. 

\begin{prop}\label{;pcauchy}
 Suppose that $C$ and $\ve $ are independent. Then it holds that: 
 \begin{enumerate}[(i)]{}
  \item for every $x,y\in \R $, 
  \begin{align*}
   \sinh \left( \argsh (C\cosh x)+y\ve \right) 
   \eqd C\cosh (x+y\ve );  
  \end{align*}
  \item for every $a\in [-1,1]$ and $y\in \R $, 
  \begin{align*}
   \sinh \left( \argsh (aC)+y\ve \right) 
   \eqd aC\cosh y+\sqrt{1-a^{2}}\sinh (y\ve ). 
  \end{align*}
  \end{enumerate}
\end{prop}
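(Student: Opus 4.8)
The plan is to deduce \pref{;pcauchy}, and hence \tref{;tm3}, from the $\la=0$ case of \lref{;lkey} by a direct computation of characteristic functions. The first, purely algebraic, point is the equivalence between the two statements: applying $\sinh(\argsh u+v)=u\cosh v+\sqrt{1+u^{2}}\,\sinh v$ with $v=y\ve$ and using $\cosh(y\ve)=\cosh y$, $\sinh(y\ve)=\ve\sinh y$, the left-hand side of (i) equals $C\cosh x\cosh y+\sqrt{1+C^{2}\cosh^{2}x}\,(\sinh y)\ve$ and its right-hand side equals $C\cosh x\cosh y+C(\sinh x\sinh y)\ve$; since $C$ and $\ve$ are symmetric and independent, the laws occurring in \tref{;tm3} depend on $a$ and $\theta$ only through $|a|$ and $|\theta|$, so after dividing by $\cosh y$ the case (i) with $x=\argch a$, $\theta=\tanh y$ is exactly \tref{;tm3}(i) for all $|a|\ge1$ and $|\theta|<1$ (the endpoints $|\theta|=1$ following by continuity in $\theta$ of the characteristic functions), and similarly (ii) with $\theta=\tanh y$ gives \tref{;tm3}(ii). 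Hence it is enough to prove \pref{;pcauchy}.

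Fix $\xi\in\R$. For the right-hand sides, conditioning on $\ve$ and using $\ex[e^{i\al C}]=e^{-|\al|}$ and $\ex[e^{it\ve}]=\cos t$ gives $\tfrac12 e^{-|\xi|\cosh(x+y)}+\tfrac12 e^{-|\xi|\cosh(x-y)}$ in case (i) and $e^{-|a\xi|\cosh y}\cos\!\bigl(\xi\sqrt{1-a^{2}}\,\sinh y\bigr)$ in case (ii). For the left-hand sides, put $U=\argsh(\kappa C)$ with $\kappa=\cosh x$ in case (i) and $\kappa=|a|$ in case (ii) (the case $a=0$ being trivial); since $C$ is symmetric, so is $U$, and therefore the characteristic function of $\sinh(U+y\ve)$ at $\xi$ equals $\mathrm{Re}\,\ex[e^{i\xi\sinh(U+y)}]$. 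A change of variables shows that $U$ has density $\tfrac{\kappa}{\pi}\cosh u/(\cosh^{2}u-w^{2})$ with $w^{2}:=1-\kappa^{2}$, and the crucial observation is the elementary Laplace representation
\[
 \frac{\cosh u}{\cosh^{2}u-w^{2}}=\int_{0}^{\infty}e^{-\mu\cosh u}\cosh(\mu w)\,d\mu ,
\]
valid pointwise in $u$ (when $\kappa\le1$ one has $0\le w<1\le\cosh u$; when $\kappa\ge1$, $w$ is purely imaginary, so that $\cosh(\mu w)=\cos(\mu\sqrt{\kappa^{2}-1})$ is bounded).

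Substituting this representation, interchanging the order of integration (legitimate because $\int_{0}^{\infty}K_{0}(\mu)\,d\mu<\infty$ and, when $w$ is real, $\int_{0}^{\infty}\cosh(\mu w)K_{0}(\mu)\,d\mu<\infty$ since $w<1$), and then applying the $\la=0$ case of \lref{;lkey} to $\int_{\R}e^{-\mu\cosh u}e^{i\xi\sinh(y+u)}\,du$, one reduces --- after one more use of Fubini's theorem --- to an inner Gaussian integral in $\mu$. Evaluating it via $\int_{0}^{\infty}e^{-\mu^{2}v/2}\cos(\mu c)\,d\mu=\tfrac12\sqrt{2\pi/v}\,e^{-c^{2}/(2v)}$, which is valid for complex $c$ as well because $\mathrm{erfc}(z)+\mathrm{erfc}(-z)=2$, its contribution to the real part is $\tfrac12\sqrt{2\pi/v}\,\cos(w\xi\sinh y)\,e^{w^{2}/(2v)-\xi^{2}v\sinh^{2}y/2}$, the Dawson-type imaginary part carrying a factor $i$ and disappearing. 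Collecting the exponents and using $1-w^{2}=\kappa^{2}$ and $1+\sinh^{2}y=\cosh^{2}y$, the remaining integral in $v$ is $\int_{0}^{\infty}\kappa(2\pi v^{3})^{-1/2}e^{-\kappa^{2}/(2v)}e^{-(\xi\cosh y)^{2}v/2}\,dv$, which by \eqref{;taulaw} and \eqref{;lt} equals $e^{-\kappa|\xi|\cosh y}$. Thus the characteristic function of $\sinh(U+y\ve)$ is $e^{-\kappa|\xi|\cosh y}\cos(w\xi\sinh y)$; with $\kappa=\cosh x$, $w=i\sinh x$ this reads $e^{-|\xi|\cosh x\cosh y}\cosh(\xi\sinh x\sinh y)=\tfrac12 e^{-|\xi|\cosh(x+y)}+\tfrac12 e^{-|\xi|\cosh(x-y)}$, which gives (i), while with $\kappa=|a|$, $w=\sqrt{1-a^{2}}$ it reads $e^{-|a\xi|\cosh y}\cos(\xi\sqrt{1-a^{2}}\sinh y)$, which gives (ii).

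The step I expect to be the main obstacle is spotting the Laplace representation above, which recasts the density of $\argsh(\kappa C)$ as an average of $e^{-\mu\cosh u}$ against the kernel $\cosh(\mu w)$ --- exactly the form needed to bring \lref{;lkey} (hence, ultimately, \pref{;phinge}) into play. The remaining difficulty is the bookkeeping of the iterated integrals, in particular the verification that the error-function term generated by the $\mu$-integration is purely imaginary and thus immaterial once real parts are taken; everything after that is a routine reduction to a first-passage Laplace transform.
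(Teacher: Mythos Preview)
Your argument is correct, but it follows a different route from the paper's. Both proofs rest on the $\la=0$ case of \lref{;lkey}; the divergence lies in how that identity is brought into contact with the Cauchy law. The paper substitutes $x\to\argsh(C\cosh x)$ (resp.\ $\argsh(aC)$) directly into \eqref{;eqlkeyd}, takes the expectation in $C$, and after one further use of \pref{;phinge} arrives at an equality of the form $\int_\R dy\,e^{-\mu\cosh y}f_1(y)=\int_\R dy\,e^{-\mu\cosh y}f_2(y)$ valid for every $\mu>0$; symmetrizing in $y$ and invoking injectivity of the Laplace transform then yields the claimed identity in law at each fixed $y$. Your approach instead computes the characteristic function explicitly: the key step is to write the density of $\argsh(\kappa C)$ as $\tfrac{\kappa}{\pi}\int_0^\infty e^{-\mu\cosh u}\cosh(\mu w)\,d\mu$ and then feed the resulting $u$-integral into \eqref{;eqlkeyd}, after which a Gaussian $\mu$-integral and the first-passage Laplace transform \eqref{;lt} close the computation. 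Your method treats cases (i) and (ii) uniformly (by letting $w$ be real or purely imaginary) and produces directly the closed form $e^{-\kappa|\xi|\cosh y}\cos(w\xi\sinh y)$ of the characteristic function, which the paper obtains only afterwards in \rref{;rtm3}; the paper's route, on the other hand, avoids both the explicit density of $\argsh(\kappa C)$ and the somewhat delicate bookkeeping around the half-line Gaussian integral. A small remark: your claim that the imaginary (Dawson-type) part drops out is more transparently seen by noting that $\cosh(\mu w)$ is real for $w$ real or purely imaginary, so that $\mathrm{Re}\int_0^\infty e^{-\mu^2 v/2}\cosh(\mu w)e^{i\mu c}\,d\mu$ with $c\in\R$ simply replaces $e^{i\mu c}$ by $\cos(\mu c)$, and the resulting even integrand may be extended to $\mu\in\R$ --- the $\mathrm{erfc}$ detour is then unnecessary.
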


Since we will use \lref{;lkey} in the case $\la =0$ below, we repeat it 
here for the reader's convenience, putting $\la =0$: 
for every $\mu >0$ and $\xi \in \R $, it holds that 
\begin{equation}\label{;eqlkeyd}
 \int _{\R }dy\,e^{-\mu \cosh y}e^{i\xi \sinh (x+y)}
 =\int _{0}^{\infty }\frac{dv}{v}\,\exp \left( -\frac{1}{2v}\right) 
 \exp \left( -\frac{\mu ^{2}+\xi ^{2}}{2}v \right) e^{i\mu \xi v\sinh x}
\end{equation}
 for all $x\in \R $. 
 
\begin{proof}[Proof of \pref{;pcauchy}]
We start with the proof of \thetag{i}. We replace $x$ in \eqref{;eqlkeyd} by 
$\argsh (C\cosh x)$ and take the expectation on both sides with respect to 
$C$. Then by Fubini's theorem, we have 
\begin{equation}\label{;eqcauchy1}
 \begin{split}
 &\int _{\R }dy\,e^{-\mu \cosh y}\ex \!\left[ 
 \exp \left\{ 
 i\xi \sinh \left( 
 \argsh (C\cosh x)+y
 \right)
 \right\}
 \right] \\
 &=\int _{0}^{\infty }\frac{dv}{v}\,
 \exp \left( 
 -\frac{1}{2v}
 \right)
 \exp 
 \left( -\frac{\mu ^{2}+\xi ^{2}+2\mu |\xi |\cosh x}{2}v
 \right) 
 \end{split}
\end{equation}
thanks to the identity 
$
\ex \!\left[ 
e^{i\mu \xi vC\cosh x}
\right] =\exp (-\mu |\xi |v\cosh x)
$ 
for $v>0$. Rewriting 
\begin{align*}
 \exp 
 \left( -\frac{\mu ^{2}+\xi ^{2}+2\mu |\xi |\cosh x}{2}v
 \right) 
 &=\exp \left\{ 
 -\frac{(\mu +|\xi |\cosh x)^{2}}{2}v+\frac{\xi ^{2}}{2}v\sinh ^{2}x
 \right\} \\
 &=\exp \left\{ 
 -\frac{(\mu +|\xi |\cosh x)^{2}}{2}v
 \right\} \ex \!\left[ 
 e^{-|\xi |\beta (v)\sinh x}
 \right] 
\end{align*}
with $\beta $ a Brownian motion, we see from \pref{;phinge} that 
the right-hand side of \eqref{;eqcauchy1} is equal to 
\begin{align*}
 \int _{\R }dy\,e^{-(\mu +|\xi |\cosh x)\cosh y}
 e^{-|\xi |\sinh x\sinh y}
 =\int _{\R }dy\,e^{-\mu \cosh y}e^{-|\xi |\cosh (x+y)}. 
\end{align*}
Replacing $e^{-|\xi |\cosh (x+y)}$ in the last integral by  
$\ex \!\left[ 
\exp \{ i\xi C\cosh (x+y)\} 
\right] $, 
we obtain from \eqref{;eqcauchy1} the identity 
\begin{equation}\label{;eqpcauchy2}
 \begin{split}
 &\int _{\R }dy\,e^{-\mu \cosh y}\ex \!\left[ 
 \exp \left\{ 
 i\xi \sinh \left( 
 \argsh (C\cosh x)+y
 \right)
 \right\}
 \right] \\
 &=\int _{\R }dy\,e^{-\mu \cosh y}
 \ex \!\left[ 
 \exp \{ i\xi C\cosh (x+y)\} 
 \right] , 
 \end{split}
\end{equation}
which holds for every $\mu >0$, $\xi \in \R $ and $x\in \R $. 
Now we observe that for any bounded measurable function 
$f:\R \to \R $, 
\begin{align*}
 \int _{\R }dy\,e^{-\mu \cosh y}f(y)
 &=2\int _{0}^{\infty }dy\,e^{-\mu \cosh y}\ex [f(\ve y)]\\
 &=2\int _{1}^{\infty }\frac{du}{\sqrt{u^{2}-1}}\,e^{-\mu u}
 \ex \!\left[ 
 f(\ve \argch u)
 \right] , 
\end{align*}
where the second line is due to change of the variables with 
$y=\argch u,\,u>1$. Here $\argch u=\log \left( u+\sqrt{u^{2}-1}\right) $. 
Thanks to the above observation as well as to the injectivity of Laplace 
transform, we conclude from \eqref{;eqpcauchy2} that  
\begin{align*}
 &\ex \!\left[ 
 \exp \left\{ 
 i\xi \sinh \left( 
 \argsh (C\cosh x)+\ve \argch u
 \right)
 \right\}
 \right] \\
 &=\ex \!\left[ 
 \exp \left\{ i\xi C\cosh \left( x+\ve \argch u\right) \right\} 
 \right] 
\end{align*}
for any $u\ge 1$, where $\ve $ is assumed to be independent 
of $C$ on both sides. The assertion \thetag{i} of the proposition then 
follows readily since $\xi $ is arbitrary and $\ve $ is symmetric. 

Proof of the assertion \thetag{ii} is done similarly to 
the above. For each fixed $a\in [-1,1]$, we replace $x$ in 
\eqref{;eqlkeyd} by $\argsh (aC)$ and take the expectation on 
both sides to get the equality 
\begin{equation}\label{;eqpcauchy3}
 \begin{split}
 &\int _{\R }dy\,e^{-\mu \cosh y}\ex \!\left[ 
 \exp \left\{ 
 i\xi \sinh \left( 
 \argsh (aC)+y
 \right)
 \right\}
 \right] \\
 &=\int _{0}^{\infty }\frac{dv}{v}\,
 \exp \left( 
 -\frac{1}{2v}
 \right)
 \exp \left\{ 
 -\frac{(\mu +|a\xi |)^{2}}{2}v
 \right\} 
 \exp \left\{ 
 -\frac{\xi ^{2}}{2}(1-a^{2})v
 \right\} . 
 \end{split}
\end{equation}
 By noting that 
 \begin{align*}
  \exp \left\{ 
 -\frac{\xi ^{2}}{2}(1-a^{2})v
 \right\} 
 =\ex \!\left[ 
 \exp \left\{ i\xi \sqrt{1-a^{2}}\beta (v)\right\}
 \right] 
 \end{align*}
 with $\beta $ a Brownian motion, and by using \pref{;phinge}, 
 the right-hand side of \eqref{;eqpcauchy3} is rewritten as 
 \begin{align*}
  &\int _{\R }dy\,e^{-(\mu +|a\xi |)\cosh y}e^{i\xi \sqrt{1-a^{2}}
  \sinh y}\\
  &=\int _{\R }dy\,e^{-\mu \cosh y}
  \ex \!\left[ 
  \exp \left\{ 
  i\xi \left( 
  aC\cosh y+\sqrt{1-a^{2}}\sinh y
  \right) 
  \right\} 
  \right] . 
 \end{align*}
 Rest of the proof proceeds in the same way as in the latter half of 
 the proof of \thetag{i} by noting the symmetry of the 
 function $\cosh y,\,y\in \R $. 
\end{proof}

We are ready to prove \tref{;tm3}. 

\begin{proof}[Proof of \tref{;tm3}]
 The assertion \thetag{i} of \pref{;pcauchy} entails that 
 by symmetry of $\ve $, 
 \begin{align*}
  aC\cosh y+\sqrt{1+a^{2}C^{2}}\,\ve \sinh y 
  \eqd C\left( a\cosh y+\sqrt{a^{2}-1}\,\ve \sinh y\right) 
 \end{align*}
 for any $y\in \R $. Here we write $a=\cosh x\ge 1$. 
 Dividing both sides by $\cosh y$, we see that the assertion 
 \thetag{i} of the theorem holds true for any $\theta \in (-1,1)$, 
 and hence for any $\theta \in [-1,1]$ since it is 
 clear that both sides of the claimed identity are continuous 
 in $\theta $ with respect to the topology of 
 weak convergence. 
 Extension to the case $a\le -1$ is done by noting that 
 $(C,-\ve )$ and $(-C,\ve )$, as well as $(-C,-\ve )$, 
 are identical in law with $(C,\ve )$ due to independence 
 of $C$ and $\ve $. This completes the proof of the 
 assertion~\thetag{i}. We omit the proof of \thetag{ii} 
 because it proceeds quite similarly. 
\end{proof}

We close this section with a remark on \tref{;tm3} as well as 
on the integral identity \eqref{;eqlkeyd} used in the 
proof of \pref{;pcauchy}. 

\begin{rem}\label{;rtm3}
\thetag{1} For every fixed $\theta \in [-1,1]$, 
\tref{;tm3} entails that the characteristic function 
of $aC+\theta \sqrt{1+a^{2}C^{2}}\,\ve $ is given by 
\begin{align*}
 \exp (-|a\xi |)\cosh \left( \theta \xi \sqrt{a^{2}-1}\right) , 
 \quad \xi \in \R , 
\end{align*}
in the case $|a|\ge 1$ while  
it is given by 
\begin{align*}
 \exp (-|a\xi |)\cos \left( \theta \xi \sqrt{1-a^{2}}\right) , 
 \quad \xi \in \R , 
\end{align*}
in the case $|a|\le 1$. To verify the expression 
in the former case, it suffices to deduce from \tref{;tm3}\,\thetag{i} 
that for any $\xi \in \R $, 
\begin{align*}
 \ex \!\left[ 
 \exp \left\{ 
 i\xi \bigl( 
 aC+\theta \sqrt{1+a^{2}C^{2}}\,\ve 
 \bigr) 
 \right\} 
 \right] 
 =\ex \!\left[ 
 \exp \left\{ 
 -|\xi|\bigl|  
 a+\theta \sqrt{a^{2}-1}\,\ve 
 \bigr| 
 \right\} 
 \right] , 
\end{align*}
and that since $|\theta |\le 1$ and $|\ve |=1$ a.s., 
\begin{align*}
 \bigl|  
 a+\theta \sqrt{a^{2}-1}\,\ve 
 \bigr| =|a|+\theta \sqrt{a^{2}-1}\,\ve \,\mathrm{sgn}\,a 
 \quad \text{a.s.}, 
\end{align*}
where $\mathrm{sgn}\,a$ is the signature of $a$. 
The above expressions of 
characteristic functions in two cases suggest that 
by means of analytic continuation, 
the assertion~\thetag{ii} of the theorem may be seen 
as a consequence of the assertion~\thetag{i} or vice versa. 

\noindent 
\thetag{2} It is of interest to observe from \eqref{;eqlkeyd} 
that if we take $\xi $ positive, then the multivariate 
function 
\begin{align*}
 \int _{\R }dy\,e^{-\mu \cosh y}e^{i\xi \sinh (x+y)},\quad 
 \mu ,\xi >0,\ x\in \R , 
\end{align*}
is symmetric with respect to $\mu $ and $\xi $. 
As will be seen in \ssref{;sym} below, this fact yields 
the following symmetry concerning the laws of 
functionals of the form $e^{2B_{t}}v+A_{t},\,v>0$, for 
every $t>0$: 
\begin{align*}
 &\frac{1}{v}\exp \left( -\frac{1}{2v}\right) 
 \pr \left( e^{2B_{t}}v+A_{t}\in du\right) dv\\
 &=\frac{1}{u}\exp \left( -\frac{1}{2u}\right) 
 \pr \left( e^{2B_{t}}u+A_{t}\in dv\right) du,
 \quad u,v>0. 
\end{align*}
We may replace $t$ by any positive and finite 
stopping time $\tau $ of the process $Z$. 
\end{rem}

\section{Some related results}\label{;related} 

In this section, we present several results relevant to 
our discussions developed in the previous section. 

\subsection{Derivations of \eqref{;eqtm1} in part and Dufresne's identity}\label{;partial}

This subsection is concerned with partial derivations of 
the identity \eqref{;eqtm1} in \tref{;tm1}, one of which is 
to be applied to a proof of an identity in law due to Dufresne \cite{duf}. 

For each $\nu \in \R $, we set 
\begin{align*}
 \da{\nu }_{t}:=\int _{0}^{t}e^{2\db{\nu }_{s}}ds,\quad t\ge 0, 
\end{align*}
where $\db{\nu }$ is the Brownian motion with drift $\nu $. 
Dufresne's identity (\cite[Proposition~4.4.4\,(b)]{duf}) asserts 
that when $\nu >0$, the perpetual integral 
$
\da{-\nu }_{\infty }
=\int _{0}^{\infty }e^{2\db{-\nu }_{s}}ds
$ 
is identical in law with the reciprocal of twice of a 
gamma variable with parameter $\nu $: 
\begin{align}
 \da{-\nu }_{\infty }
 &\eqd \frac{1}{2\ga _{\nu }}, \label{;duf} \\
 \pr (\ga _{\nu }\in dv)
 &=\frac{1}{\Gamma (\nu )}v^{\nu -1}e^{-v}\,dv,\quad v>0, 
 \label{;pdfg} 
\end{align}
where $\Gamma $ is the gamma function; 
we refer to Theorem~6.2 and Proposition~6.3 in \cite{mySI} 
for different proofs of \eqref{;duf} than the original one. 

In this subsection, we use the invariance formulae for 
Cauchy variable in the form \pref{;pcauchy} to see 
that the identity \eqref{;eqtm1} is recovered partly in two ways; 
we then apply one of those partial identities to give another proof 
of Dufresne's identity \eqref{;duf}. Our argument to be developed 
here does not use \pref{;pparti} and relies only on \pref{;pcauchy} 
and the identity \eqref{;boug2}, which makes the contents 
of this subsection completely self-contained. 

In the proposition exhibited below, the first identity 
\eqref{;eqppar1} treats the one between second coordinates 
in \eqref{;eqtm1} when $\tau =t$, and the second 
identity \eqref{;eqppar2} treats the case $x=0$ and 
$\tau =t$ in \eqref{;eqtm1} with third coordinates dropped. 

\begin{prop}\label{;ppar}
Under the same setting as in \tref{;tm1}, it holds that for 
every fixed $t>0$ and $x\in \R $, 
\begin{align}
 C\eb{t}\cosh x+\hbe (A_{t})
 &\eqd 
 C\cosh (x+B_{t}), \label{;eqppar1}
\intertext{and that for every fixed $t>0$,}
 \bigl( 
 \beta (A_{t}),\,C\eb{t}+\hbe (A_{t})
 \bigr) 
 &\eqd \left( 
 \sinh B_{t},\,C\cosh B_{t}
 \right) . \label{;eqppar2}
\end{align}
\end{prop}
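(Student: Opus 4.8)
The plan is to derive both identities in \pref{;ppar} from \pref{;pcauchy} together with Bougerol's identity \eqref{;boug2}, without invoking the Matsumoto--Yor result \pref{;pparti}. The basic idea is that \pref{;pcauchy} is an identity in law involving an independent Rademacher variable $\ve $, while Bougerol's identity \eqref{;boug2} supplies the correspondence between $\sinh B_{t}$ and $\eb{t}\sinh x+\beta (A_{t})$, so the two can be combined provided we can insert an independent copy of Brownian motion run for time $A_{t}$.

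First I would prove \eqref{;eqppar1}. The strategy is to start from Bougerol's identity \eqref{;boug2} in the form $\eb{t}\sinh x+\beta (A_{t})\eqd \sinh(x+B_{t})$, and apply $\argsh $ to pass to $x+B_{t}\eqd \argsh \bigl( \eb{t}\sinh x+\beta (A_{t})\bigr) $ as an identity in law (this is legitimate since $\argsh $ is a deterministic bijection, although some care is needed because the right-hand side involves the extra randomness of $\beta $). Then I would feed this into the left-hand side of \pref{;pcauchy}\,\thetag{i}, read with its ``$x$'' replaced by $B_{t}$ (conditionally on $B$, so that $C\cosh B_{t}$ plays the role of $C\cosh x$) and with $y\ve $ replaced by an independent Gaussian increment. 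The point is that $\sinh\bigl( \argsh(C\cosh x)+y\ve\bigr) $, after integrating $y\ve $ against the law of a Brownian value at an appropriate time, should reproduce $C\eb{t}\cosh x+\hbe (A_{t})$ on one side and $C\cosh(x+B_{t})$ on the other; here the independence structure of $C$, $\hbe $, $\beta $, $B$ must be tracked carefully. The cleanest route is probably to mimic the Fourier-transform computation used in the proof of \pref{;pkey}: write $e^{-|\xi|\cosh(x+B_t)}=\ex[e^{i\xi C\cosh(x+B_t)}]$, use \lref{;lkey} with $\la =0$ (i.e.\ \eqref{;eqlkeyd}) to introduce the time-changed Brownian motion, and recognize the resulting expression as the characteristic function of $C\eb{t}\cosh x+\hbe(A_t)$, now using the joint law of $(\eb{t},A_{t})$ only through \eqref{;boug2} and the scaling/time-reversal identity \eqref{;2-dim}.

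Next I would prove \eqref{;eqppar2} by the same device applied to \pref{;pcauchy}\,\thetag{ii} with $a=1$ (so the $\sqrt{1-a^{2}}$ term drops): from $\sinh\bigl( \argsh(C)+y\ve\bigr) \eqd C\cosh y$ one gets, after the appropriate randomization of $y\ve $ and using \eqref{;boug2} in the first coordinate, the joint identity $\bigl( \beta(A_t),\,C\eb{t}+\hbe(A_t)\bigr) \eqd \bigl( \sinh B_t,\,C\cosh B_t\bigr) $. The joint structure is obtained by carrying a two-variable Fourier transform $(\xi,\eta)$ through the computation, exactly as in the passage from \eqref{;leqlkey} to the end of the proof of \pref{;pkey}\,\thetag{i}, except that here $\zmu $ is replaced by $B_t$ and the role of \pref{;pparti} is played by Bougerol's identity plus \eqref{;2-dim}; alternatively, one notes $\eqref{;eqppar2}$ is just the $x=0$ instance of the pair $\bigl( \eb{t}\sinh x+\beta(A_t),\,C\eb{t}\cosh x+\hbe(A_t)\bigr) \eqd \bigl( \sinh(x+B_t),\,C\cosh(x+B_t)\bigr) $, whose first coordinate is \eqref{;boug2} and whose second coordinate is \eqref{;eqppar1}, so \eqref{;eqppar2} would follow once one checks that the relevant conditional independence — $\hbe(A_t)$ conditionally independent of $\beta(A_t)$ given $B$, matched against $C\cosh B_t$ being a deterministic function of $\sinh B_t$ times $C$ — is preserved through the identification.

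The main obstacle I expect is bookkeeping of the independence and conditioning: the left-hand sides involve the triple $(B,\beta,\hbe)$ (and $C$), and one must argue that after the $\argsh $ substitution the ``new $x$'' is $B_t$ carrying its own randomness, yet $C$ and the auxiliary Brownian motion introduced via \pref{;pcauchy}/\lref{;lkey} remain independent of it and of each other. Writing everything through joint characteristic functions in $(\xi,\eta)\in\R^2$ and invoking injectivity of the Fourier transform — as in the \pref{;pkey} proof — sidesteps the measure-theoretic subtleties at the cost of one slightly longer computation; that computation, plus the verification that the time-change $e^{2B_t}v+A_t$ (appearing after the $v=e^y/\mu$ type substitution) really does match the variance of $\hbe(A_t)$ when combined with \eqref{;2-dim}, is where the real work lies. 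I do not expect any genuinely new estimate to be needed beyond \eqref{;boug2}, \eqref{;2-dim}, \pref{;pcauchy} and \eqref{;eqlkeyd}.
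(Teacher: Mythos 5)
Your route to \eqref{;eqppar1} is essentially the paper's: substitute $B_{t}$ for the Rademacher--randomized variable $y\ve $ in \pref{;pcauchy}\,\thetag{i} (legitimate since $\ve B_{t}\eqd B_{t}$), then apply Bougerol's identity \eqref{;boug2} with the starting point replaced by the random $\argsh (C\cosh x)$ after conditioning on the independent $C$. Two small remarks: it is the $y$ of \pref{;pcauchy}, not its $x$, that becomes $B_{t}$ (your first description swaps these, though your next sentence lands on the right substitution), and the detour through \eqref{;eqlkeyd} is redundant once \pref{;pcauchy} is taken as given --- indeed \eqref{;eqlkeyd} is integrated against $e^{-\mu \cosh y}\,dy$, i.e.\ the law of $\zmu $, so transferring it to $B_{t}$ would reintroduce \pref{;pparti}, which you set out to avoid.

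The genuine gap is in \eqref{;eqppar2}. Specializing \pref{;pcauchy}\,\thetag{ii} to $a=1$ yields, after the same randomization and Bougerol step, only $C\eb{t}+\hbe (A_{t})\eqd C\cosh B_{t}$, the marginal law of the second coordinate; together with \eqref{;boug1} for the first coordinate this gives the two marginals of \eqref{;eqppar2}, which do not determine the joint law, and the ``conditional independence is preserved'' check you defer in your fallback argument is exactly the content of the claim. Nor can the two-variable Fourier transform you invoke be run with $a$ frozen at $1$: the parameter $a$ \emph{is} the second Fourier variable. The paper keeps $a\in [-1,1]$ free, obtains $aC\eb{t}+\beta (A_{t})\eqd aC\cosh B_{t}+\sqrt{1-a^{2}}\sinh B_{t}$ for every $a$, and then notes that the Fourier transform in $\xi $ of this scalar identity is the joint characteristic function of $\bigl( C\eb{t}+\hbe (A_{t}),\,\beta (A_{t})\bigr) $, resp.\ $\left( C\cosh B_{t},\,\sinh B_{t}\right) $, evaluated at $\bigl( a\xi ,\sqrt{1-a^{2}}\,\xi \bigr) $ --- here one uses that $e^{-\xi ^{2}A_{t}/2}$ factors as the product of the conditional characteristic functions of $a\xi \hbe (A_{t})$ and $\xi \sqrt{1-a^{2}}\,\beta (A_{t})$ for two independent $\hbe $ and $\beta $. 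Since these evaluation points sweep out all of $\R ^{2}$ as $(a,\xi )$ varies, injectivity of the Fourier transform closes the argument. Restore the free parameter $a$ and your proof goes through.
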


\begin{proof}
 For three independent elements $B$, $C$ and a Rademacher variable $\ve $, 
 we substitute $y$ in \pref{;pcauchy}\,\thetag{i} by 
 $B_{t}$ to obtain 
 \begin{align*}
  \sinh \left( \argsh (C\cosh x)+B_{t}\right) 
  \eqd C\cosh (x+B_{t}), 
 \end{align*}
 where we used the fact that $\ve B_{t}\eqd B_{t}$ 
 by independence of $B$ and $\ve $ and by symmetry 
 of $B$. In virtue of the identity \eqref{;boug2}, 
 the left-hand side of the above identity is identical in law 
 with 
 \begin{align*}
  C\eb{t}\cosh x+\hbe (A_{t}), 
 \end{align*}
 which shows \eqref{;eqppar1}. 
 
 To prove \eqref{;eqppar2}, 
 we repeat the same argument as above to obtain from 
 \pref{;pcauchy}\,\thetag{ii}, 
 \begin{align*}
  aC\eb{t}+\beta (A_{t})\eqd 
  aC\cosh B_{t}+\sqrt{1-a^{2}}\sinh B_{t}
 \end{align*}
 for every $a\in [-1,1]$, where on the left-hand side, 
 $\beta $ is a Brownian motion independent of $B$ and $C$. 
 Taking the Fourier transform 
 on both sides, we have for any $\xi \in \R $, 
 \begin{align}\label{;jlt1}
  \ex \!\left[ 
  \exp \left( 
  -|a\xi |\eb{t}-\frac{\xi ^{2}}{2}A_{t}
  \right) 
  \right] 
  =\ex \!\left[ 
  \exp \left( 
  ia\xi C\cosh B_{t}
  \right) 
  \exp \bigl( 
  i\xi \sqrt{1-a^{2}}\sinh B_{t}
  \bigr) 
  \right] . 
 \end{align}
 We may express the left-hand side of \eqref{;jlt1} as 
 \begin{align*}
  \ex \!\left[ 
  \exp \bigl\{ 
  ia\xi \bigl( 
  C\eb{t}+\hbe (A_{t})
  \bigr) 
  \bigr\} 
  \exp \bigl\{ 
  i\xi \sqrt{1-a^{2}}\beta (A_{t})
  \bigr\} 
  \right] 
 \end{align*}
 in terms of the pair of random variables on the 
 left-hand side of \eqref{;eqppar2}. Since the last 
 two expressions agree for any $a$ and $\xi $, the second 
 identity \eqref{;eqppar2} also follows.  
\end{proof}

Using the former identity \eqref{;eqppar1} in \pref{;ppar}, 
we are going to prove 

\begin{prop}\label{;pduf}
 Let $\nu >0$ and fix $t>0$. Then it holds that 
 \begin{align}\label{;eqpduf}
  \frac{e^{2\db{-\nu }_{t}}}{2\ga _{\nu }}
  +\da{-\nu }_{t}\eqd \frac{1}{2\ga _{\nu }}, 
\end{align}
where on the left-hand side, the gamma variable 
$\ga _{\nu }$ with parameter $\nu $ is independent of 
$B$. In particular, letting $t\to \infty $ on the 
left-hand side leads to \eqref{;duf}. 
\end{prop}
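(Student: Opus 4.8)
The plan is to derive \eqref{;eqpduf} from the identity \eqref{;eqppar1} in \pref{;ppar} by choosing the value of the fixed parameter $x$ appropriately and then eliminating the auxiliary Cauchy variable. Recall that \eqref{;eqppar1} reads
\begin{align*}
 C\eb{t}\cosh x+\hbe (A_{t})\eqd C\cosh (x+B_{t}),
\end{align*}
with $B$, $C$ and $\hbe $ independent on the left-hand side and $B$, $C$ independent on the right. The key observation is that a standard Cauchy variable $C$ satisfies $1/C\eqd C$, and more usefully, that for any positive random variable $S$ independent of $C$ we have $C/\sqrt{S}\eqd C$ and $C^{2}$ has the same law as $1/\ga _{1/2}$ up to constants; but the cleanest route is to compute the characteristic function in $\xi $ of each side after conditioning on the Brownian path, exactly as in the proof of \pref{;ppar}: conditionally on $\F ^{B}_{t}$, the left side of \eqref{;eqppar1} has conditional characteristic function $\exp \bigl(-|\xi |\,\eb{t}\cosh x-\tfrac{\xi ^{2}}{2}A_{t}\bigr)$, while the right side has conditional characteristic function $\exp \bigl(-|\xi |\cosh (x+B_{t})\bigr)$. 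Hence \eqref{;eqppar1} is equivalent to the Laplace-transform identity
\begin{align*}
 \ex \!\left[\exp \left(-|\xi |\,\eb{t}\cosh x-\tfrac{\xi ^{2}}{2}A_{t}\right)\right]
 =\ex \!\left[\exp \left(-|\xi |\cosh (x+B_{t})\right)\right]
\end{align*}
for all $\xi \in \R $ and all $x\in \R $.

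First I would introduce a gamma$(\nu )$ variable $\ga _{\nu }$ independent of everything else and integrate both sides of the above against the law of a suitable function of $\ga _{\nu }$. Specifically, since $\int _{0}^{\infty }\exp (-s\cosh x)\,s^{\nu -1}e^{-s}\,ds/\Ga (\nu )$-type integrals reproduce the structure $e^{-\cosh x}$ times powers, the natural move is: replace the deterministic multiplier by randomizing $|\xi |$ through $\ga _{\nu }$. Concretely, multiply the conditional Laplace identity by the density \eqref{;pdfg} of $\ga _{\nu }$ with $|\xi |$ playing the role of the integration variable — that is, set $|\xi |=\sqrt{2\ga _{\nu }}$ or, more directly, recognize that $\int _{0}^{\infty }e^{-|\xi |a}\,\pr (\text{something}\in d\xi )$ computes a moment generating function. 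The point of doing this is that the substitution $x\mapsto \db{-\nu }_{t}$ will be engineered so that $\eb{t}\cosh x$ on the left becomes proportional to $e^{2\db{-\nu }_{t}}$ and $A_{t}$ becomes $\da{-\nu }_{t}$, via the Cameron--Martin / Girsanov change of measure that converts $B$ into $B^{(-\nu )}$; indeed, under the drift shift the weight $e^{-\nu B_{t}-\nu ^{2}t/2}$ multiplied by $\cosh$-type terms is exactly what appears when one expands $\cosh (x+B_{t})$ with $x$ chosen as a deterministic shift and then absorbs half of it into a density.

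The cleaner implementation, which I would actually carry out, is to take \eqref{;eqppar1}, condition on $B$, and integrate the resulting scalar identity in $\xi $ against a measure chosen so that $\ex _{\xi }[e^{-|\xi |a}]=\text{const}\cdot e^{-a}/\text{(something)}$; the half-Cauchy or a gamma mixture does this. Then replace the fixed $x$ by a value depending on a further independent Brownian increment and use the tower property together with the time-reversal identity \eqref{;2-dim} and the Lamperti-type manipulations already recalled in \rref{;rpparti}, to rewrite $\eb{t}$ and $A_{t}$ in terms of the drifted functionals $e^{2\db{-\nu }_{t}}$ and $\da{-\nu }_{t}$. The main obstacle I anticipate is bookkeeping: getting the normalizing constants and the factor $1/(2\ga _{\nu })$ to line up exactly, and making sure the independence structure (between $\ga _{\nu }$ and $B$ on the left of \eqref{;eqpduf}, versus between $\ga _{\nu }$ and $B$ again on the right) is preserved through the mixture and the change of measure. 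Once \eqref{;eqpduf} is established, the passage to Dufresne's identity \eqref{;duf} is routine: as $t\to \infty $ we have $e^{2\db{-\nu }_{t}}\to 0$ a.s.\ (since $\db{-\nu }_{t}\to -\infty $) and $\da{-\nu }_{t}\uparrow \da{-\nu }_{\infty }$, which is a.s.\ finite by the same reasoning, and also $\da{-\nu }_{\infty }$ is integrable for $\nu >1$ / finite a.s.\ for all $\nu >0$; so the left-hand side of \eqref{;eqpduf} converges a.s.\ (hence in law) to $\da{-\nu }_{\infty }$, while the right-hand side does not depend on $t$, giving $\da{-\nu }_{\infty }\eqd 1/(2\ga _{\nu })$, which is precisely \eqref{;duf}.
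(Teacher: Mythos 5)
Your starting point is correct and matches the paper's: the identity \eqref{;eqppar1} is equivalent, after conditioning on $B$ and computing the conditional characteristic function in $\xi $, to the Laplace-transform identity \eqref{;jlt2},
\begin{align*}
 \ex \!\left[ \exp \left( -z\eb{t}\cosh x-\tfrac{z^{2}}{2}A_{t}\right) \right]
 =\ex \!\left[ \exp \left\{ -z\cosh (x+B_{t})\right\} \right] ,\quad z>0,\ x\in \R .
\end{align*}
But from there your plan has a genuine gap: the step that actually produces the gamma variable and the drift $-\nu $ is missing, and what you propose in its place does not work as described. You suggest integrating the identity over the transform variable $\xi $ (or $z$) against a gamma-type mixture, and separately ``replacing the fixed $x$ by a value depending on a further independent Brownian increment'' followed by time reversal; the first operation turns both sides into expectations of some function of $\eb{t}\cosh x$, $A_{t}$ and $\cosh (x+B_{t})$ but gives no mechanism for the parameter $\nu $ or the factor $e^{-\nu B_{t}}$ to appear, and the second is not a defined construction. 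The correct move --- and the crux of the paper's proof --- is to integrate over the \emph{shift parameter} $x$ against $\tfrac{1}{2}e^{-\nu x}\,dx$. By the integral representation \eqref{;intk1} this converts the left-hand side into $\ex [K_{\nu }(z\eb{t})\exp (-\tfrac{z^{2}}{2}A_{t})]$ and, after translating $x$ by $-B_{t}$, the right-hand side into $K_{\nu }(z)\,e^{\nu ^{2}t/2}$. One then invokes the probabilistic representation \eqref{;intk2d}, $K_{\nu }(w)=2^{\nu -1}\Ga (\nu )w^{-\nu }\ex [\exp (-w^{2}/(4\ga _{\nu }))]$, which is where $1/(2\ga _{\nu })$ enters, and the resulting weight $e^{-\nu B_{t}}$ is absorbed by Cameron--Martin to turn $(\eb{t},A_{t})$ into $(e^{\db{-\nu }_{t}},\da{-\nu }_{t})$; injectivity of the Laplace transform in $z$ then yields \eqref{;eqpduf}. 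Without identifying this integration in $x$ and the representation \eqref{;intk2d}, the ``bookkeeping'' you defer is not bookkeeping but the proof itself.

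Your final paragraph on the passage $t\to \infty $ is fine: $e^{2\db{-\nu }_{t}}/(2\ga _{\nu })\to 0$ and $\da{-\nu }_{t}\uparrow \da{-\nu }_{\infty }$ a.s., so the left-hand side of \eqref{;eqpduf} converges a.s.\ to $\da{-\nu }_{\infty }$ while the right-hand side is independent of $t$, giving \eqref{;duf}. (The paper phrases this as bounded convergence applied to the Laplace transforms in \eqref{;eqpduf3}, but the almost-sure argument is equally valid.)
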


\begin{rem}\label{;reqpduf}
 It should be noted that Dufresne's identity \eqref{;duf} also implies 
 \eqref{;eqpduf}; indeed, for every $t>0$, 
 \begin{align*}
  \da{-\nu }_{\infty }&=\da{-\nu }_{t}
  +\int _{t}^{\infty }e^{2\db{-\nu }_{s}}ds\\
  &=\da{-\nu }_{t}+e^{2\db{-\nu }_{t}}\!\!
  \int _{0}^{\infty }e^{2(\db{-\nu }_{s+t}-\db{-\nu }_{t})}ds, 
 \end{align*}
 the last integral being independent of $\db{-\nu }_{s},\,0\le s\le t$, 
 and by \eqref{;duf}, distributed as $1/(2\ga _{\nu })$. 
\end{rem}

Notice that by the same reasoning as used in the proof of the identity 
\eqref{;eqpkey2} in \pref{;pkey}, the identity \eqref{;eqppar1} 
admits the equivalent expression 
\begin{align}\label{;eqduf}
 \tau _{\eb{t}\cosh x}(\hB )+A_{t}
 \eqd 
 \tau _{\cosh (x+B_{t})}(\hB )
\end{align}
as in \eqref{;eqtm1d}. Here $\hB $ is a Brownian motion independent of 
$B$ on both sides. We start the proof of \pref{;pduf} from \eqref{;eqduf}. 
We also observe from \eqref{;pdfg} and another integral representation 
\eqref{;intk2} of the Macdonald function that 
when $\nu >0$, we may put that representation into the form 
\begin{align}\label{;intk2d}
 K_{\nu }(z)=2^{\nu -1}\Gamma (\nu )z^{-\nu }
 \ex \!\left[ 
 \exp \left( 
 -\frac{z^{2}}{4\ga _{\nu }}
 \right) 
 \right] ,\quad z>0. 
\end{align}

\begin{proof}[Proof of \pref{;pduf}]
 Recalling \eqref{;lt} and replacing $\la $ therein by 
 every $z>0$, we take the Laplace transform on both sides of 
 \eqref{;eqduf} to get 
 \begin{align}\label{;jlt2}
  \ex \!\left[ 
  \exp \left( 
  -z\eb{t}\cosh x-\frac{z^{2}}{2}A_{t}
  \right) 
  \right] 
  =\ex \!\left[ 
  \exp \left\{ 
  -z\cosh (x+B_{t})
  \right\} 
  \right] . 
 \end{align}
 For each fixed $\nu \in \R $, we integrate both sides over $\R $ 
 with respect to $(1/2)e^{-\nu x}\,dx$. Then by Fubini's theorem and 
 by the integral representation \eqref{;intk1} of $K_{\nu }$, 
 the left-hand side becomes 
 \begin{align}\label{;eqpduf1}
  \ex \!\left[ 
  K_{\nu }(z\eb{t})\exp \left( 
  -\frac{z^{2}}{2}A_{t}
  \right) 
  \right] . 
 \end{align}
 As for the right-hand side, by Fubini's theorem, we compute 
 \begin{align}
  \frac{1}{2}\ex \!\left[ 
  \int _{\R }dx\,e^{-\nu x}\exp \left\{ 
  -z\cosh (x+B_{t})
  \right\} 
  \right] 
  &=\frac{1}{2}\ex \!\left[ 
  \int _{\R }dx\,e^{-\nu (x-B_{t})}\exp (-z\cosh x)
  \right] \notag \\
  &=\frac{1}{2}\int _{\R }dx\,e^{-z\cosh x}e^{-\nu x}\times 
  \ex \!\left[ 
  e^{\nu B_{t}}
  \right] \notag \\
  &=K_{\nu }(z)e^{\nu ^{2}t/2}. \label{;eqpduf2}
 \end{align}
 We let $\nu >0$ hereafter. Assuming that 
 $\ga _{\nu }$ is independent of $B$, we now use \eqref{;intk2d} 
 to rewrite \eqref{;eqpduf1} as 
 \begin{align*}
  &2^{\nu -1}\Gamma (\nu )z^{-\nu }
  \ex \!\left[ 
  e^{-\nu B_{t}}
  \exp \left\{ 
  -\frac{z^{2}}{2}\left( 
  \frac{e^{2B_{t}}}{2\ga _{\nu }}+A_{t}
  \right) 
  \right\} 
  \right] \\
  &=2^{\nu -1}\Gamma (\nu )z^{-\nu }
  \ex \!\left[ 
  \exp \left\{ 
  -\frac{z^{2}}{2}\left( 
  \frac{e^{2\db{-\nu }_{t}}}{2\ga _{\nu }}+\da{-\nu }_{t}
  \right) 
  \right\} 
  \right] e^{\nu ^{2}t/2}. 
 \end{align*}
 Here we used the Cameron--Martin formula for the equality. Since 
 the last expression agrees with \eqref{;eqpduf2}, we have for any 
 $z>0$, 
 \begin{align}\label{;eqpduf3}
  \ex \!\left[ 
  \exp \left\{ 
  -\frac{z^{2}}{2}\left( 
  \frac{e^{2\db{-\nu }_{t}}}{2\ga _{\nu }}+\da{-\nu }_{t}
  \right) 
  \right\} 
  \right] 
  =
  \ex \!\left[ 
 \exp \left( 
 -\frac{z^{2}}{2}\cdot \frac{1}{2\ga _{\nu }}
 \right) 
 \right] 
 \end{align}
 thanks to \eqref{;intk2d}. Therefore the identity \eqref{;eqpduf} 
 is proven by the injectivity of Laplace transform. 
 Letting $t\to \infty $ on the left-hand side of 
 \eqref{;eqpduf3} also proves \eqref{;duf} 
 by the bounded convergence theorem. 
\end{proof}

\begin{rem}\label{;requiv}
\thetag{1} Recalling the well-known fact, 
as readily seen from \eqref{;idenst} and 
the identity $B_{1}^{2}\eqd 2\ga _{1/2}$, that 
\begin{align*}
 \tau _{a}(B)\eqd 
 \frac{a^{2}}{2\ga _{1/2}}
\end{align*}
for every $a\in \R $, we may rephrase \eqref{;eqduf} as 
\begin{align}\label{;eqdufd}
 \frac{e^{2B_{t}}\cosh ^{2}x}{2\ga _{1/2}}+A_{t}
 \eqd 
 \frac{\cosh ^{2}(x+B_{t})}{2\ga _{1/2}}, 
\end{align}
which holds for every fixed $t>0$ and $x\in \R $. 
Here $B$ and $\ga _{1/2}$ are independent on both sides. 
For each fixed $t$, the family of identities \eqref{;eqdufd} 
indexed by $x\in \R $, may be regarded as an equivalent expression 
for that of identities \eqref{;eqpduf} indexed by $\nu >0$ 
because reasoning used in the above proof of \pref{;pduf} is invertible. 

\noindent 
\thetag{2} For every $t>0$, by adopting the identity 
\begin{align}\label{;eqadopt}
 \bigl( 
 C\eb{t}\cosh x+\hbe (A_{t}),\,Z_{t}
 \bigr) 
 \eqd \left( 
 C\cosh (x+B_{t}),\,Z_{t}
 \right) 
\end{align}
from \eqref{;eqtm1}, the identity \eqref{;eqpduf} extends to 

\begin{align}\label{;eqpdufe}
  \left( \frac{e^{2\db{-\nu }_{t}}}{2\ga _{\nu }}
  +\da{-\nu }_{t},\,e^{-\db{-\nu }_{t}}\!\!\da{-\nu }_{t}
  \right) 
  \eqd 
  \left( 
  \frac{1}{2\ga _{\nu }},\,e^{-\db{-\nu }_{t}}\!\!\da{-\nu }_{t}
  \right) , 
\end{align}
where $B$ and $\ga _{\nu }$ are independent on 
both sides. In fact, by \eqref{;eqadopt}, it holds that for any 
bounded measurable function 
$f:\R \to \R $, 
 \begin{align*}
  \ex \!\left[ f(Z_{t})
  \exp \left( 
  -z\eb{t}\cosh x-\frac{z^{2}}{2}A_{t}
  \right) 
  \right] 
  =\ex \!\left[ f(Z_{t})
  \exp \left\{ 
  -z\cosh (x+B_{t})
  \right\} 
  \right] , 
 \end{align*}
to which the same reasoning as in the proof of 
\pref{;pduf} applies. 
The extension \eqref{;eqpdufe} may also be seen as an immediate 
consequence of \cite[Proposition~13.1]{myPII} together with 
\cite[Proposition~1.7]{myPI}. In view of those two propositions, 
the above identity \eqref{;eqpdufe} remains true when we replace 
$t$ by any positive and finite stopping time of the process 
$\bigl\{ e^{-\db{-\nu }_{t}}\!\!\da{-\nu }_{t}\bigr\} _{t\ge 0}$. 

\noindent 
\thetag{3} As a consequence of one of their results relevant to 
hyperbolic Bessel processes, the equality \eqref{;jlt2} is obtained 
in Proposition~2.4 of \cite{jw} by Jakubowski and Wi\'sniewolski, 
who also give an alternative proof of Bougerol's identity \eqref{;boug1} 
based on their study. 
\end{rem}

We close this subsection with a representation for the joint 
Laplace transform of the law of $(\eb{t},\,A_{t})$ 
in terms of $B_{t}$, which is easily deduced from proofs 
of \psref{;ppar} and \ref{;pduf}, and which may be compared 
with \rref{;rtm3}\,\thetag{1}. 

\begin{prop}\label{;pjlt}
 Let $\la \ge 0$ and $\xi \in \R $. Then for every $t>0$, the expectation 
 \begin{align*}
  \ex \!\left[ 
  \exp \left( 
  -\la \eb{t}-\frac{\xi ^{2}}{2}A_{t}
  \right) 
  \right] 
 \end{align*}
 admits the following representation: 
 \begin{align}
  \ex \!\left[ 
  \exp \left( -\la \cosh B_{t}\right) 
  \cos \bigl( \sqrt{\xi ^{2}-\la ^{2}}\sinh B_{t}\bigr) 
  \right] 
  && \text{if}\ \la \le |\xi |, \label{;eqjlt1}
 \intertext{and}
  \ex \!\left[ 
  \exp \left( -\la \cosh B_{t}\right) 
  \cosh \bigl( \sqrt{\la ^{2}-\xi ^{2}}\sinh B_{t}\bigr) 
  \right] 
  && \text{if}\ \la \ge |\xi |. \label{;eqjlt2}
 \end{align}
\end{prop}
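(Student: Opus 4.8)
The plan is to deduce this from the key integral identity \eqref{;eqlkeyd} of \lref{;lkey} (equivalently \pref{;phinge}), in the same spirit as the proofs of \psref{;ppar} and \ref{;pduf}. First I would start from \eqref{;jlt2}, the joint Laplace--exponential identity obtained in the proof of \pref{;pduf}, namely
\begin{align*}
 \ex \!\left[
 \exp \left( -z\eb{t}\cosh x-\frac{z^{2}}{2}A_{t}\right)
 \right]
 =\ex \!\left[ \exp \left\{ -z\cosh (x+B_{t})\right\} \right] ,
\end{align*}
valid for every $z>0$ and $x\in \R $. The right-hand side is an average, over the independent variable $B_{t}$, of $e^{-z\cosh x\cosh B_{t}}\,e^{z\sinh x\sinh B_{t}}$, so the plan is to pick $z$ and $x$ cleverly so that $z\cosh x=\la $ while the factor $e^{z\sinh x\sinh B_{t}}$ produces exactly the trigonometric/hyperbolic term in \eqref{;eqjlt1}--\eqref{;eqjlt2}.

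Concretely, fix $\la \ge 0$ and $\xi \in \R $. In the case $\la \le |\xi |$ I would set $z\cosh x=\la $ and choose $x$ purely imaginary, $x=i\theta $ with $\theta $ real, so that $\cosh x=\cos \theta $ and $\sinh x=i\sin \theta $; then $z=\la /\cos \theta $ and $z\sinh x=i\la \tan \theta $, and one wants $z^{2}=\xi ^{2}$, i.e.\ $\la ^{2}/\cos ^{2}\theta =\xi ^{2}$, forcing $\cos \theta =\la /|\xi |$ (legitimate since $\la \le |\xi |$) and hence $\la \tan \theta =\sqrt{\xi ^{2}-\la ^{2}}$. With these choices the left-hand side of \eqref{;jlt2} becomes exactly $\ex [\exp (-\la \eb{t}-(\xi ^{2}/2)A_{t})]$ and the right-hand side becomes $\ex [e^{-\la \cosh B_{t}}\,e^{i\sqrt{\xi ^{2}-\la ^{2}}\sinh B_{t}}]$, whose real part (the whole quantity is real) is \eqref{;eqjlt1}. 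In the case $\la \ge |\xi |$ I would instead keep $x$ real and solve $z\cosh x=\la $, $z^{2}=-(\xi ^{2}-\la ^{2})=\la ^{2}-\xi ^{2}$ is wrong in sign — rather one keeps $z^{2}$ as is but now wants the coefficient of $\sinh B_{t}$ to be $\sqrt{\la ^{2}-\xi ^{2}}$; taking $z=\sqrt{\la ^{2}-\xi ^{2}}$? No: I would take $z\cosh x=\la$ with $z^2 = \la^2 - (\la^2-\xi^2)$, i.e.\ choose $x$ real with $\cosh x=\la /z$ and $z=|\xi|$ so $z\sinh x = \sqrt{\la^2-\xi^2}$; then the left side is again $\ex[\exp(-\la e^{B_t}-(\xi^2/2)A_t)]$ and the right side is $\ex[e^{-\la\cosh B_t}\,e^{\sqrt{\la^2-\xi^2}\sinh B_t}]$, and by symmetry of $B_{t}$ this equals \eqref{;eqjlt2}.

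Rather than juggling the cases by hand, the cleaner route — and the one I would actually write up — is to invoke \lref{;lkey} directly. In \eqref{;eqlkeyd} replace the dummy $\mu $ by $1/Z_{t}$ after integrating against $\pr (1/Z_{t}\in d\mu )$, or more simply observe that \eqref{;jlt2} is literally \eqref{;eqlkeyd} with $\la =0$, $\mu $ averaged over the law of $1/Z_{t}$, and $e^{y}/\mu$ identified with $A_{t}$ via \pref{;pparti}; then analytically continue the identity in the complex parameter $x$ (the integrals converge locally uniformly for $x$ in a strip, so both sides are holomorphic there) and specialise $x$ as above. Either way, the one technical point to verify carefully is the \textbf{main obstacle}: justifying that, after substituting complex $x$, the resulting identity between entire functions of $\xi$ (or between $\la,\xi$) is still valid — i.e.\ that \eqref{;jlt2}, initially proved for real $z>0$ and real $x$, extends by analytic continuation to the needed complex values, and that all expectations remain finite (here one uses $\cosh B_{t}\ge 1$ and $|\sinh B_{t}|\le \cosh B_{t}$, so the integrand is bounded by $e^{(\sqrt{\la^2-\xi^2}-\la)\cosh B_t}\le 1$ when $\la\ge|\xi|$, and bounded in modulus by $e^{-\la\cosh B_t}\le1$ when $\la\le|\xi|$, guaranteeing integrability throughout). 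Once that is in place, the two displayed representations follow immediately, and one notes that the common value at $\la=|\xi|$ is $\ex[e^{-\la\cosh B_t}]$, consistent with both formulae.
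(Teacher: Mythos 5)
Your argument is correct, and for the case $\la \ge |\xi |$ it is exactly the paper's proof: both start from \eqref{;jlt2} and set $z=|\xi |$, $\cosh x=\la /|\xi |$, using the symmetry of $\sinh B_{t}$ to produce the $\cosh $ factor. For the case $\la \le |\xi |$ you take a genuinely different route: you analytically continue \eqref{;jlt2} to purely imaginary $x=i\theta $ with $\cos \theta =\la /|\xi |$, whereas the paper avoids any continuation by invoking a second, independently derived identity, namely \eqref{;jlt1} from the proof of \pref{;ppar} (which comes from the Cauchy invariance formula \pref{;pcauchy}\,(ii) and carries a built-in real parameter $a\in [-1,1]$), and simply putting $|a\xi |=\la $. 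Your continuation is legitimate and your flagged ``main obstacle'' is correctly resolved: on the strip $|\mathrm{Im}\,x|<\pi /2$ one has $\mathrm{Re}\cosh x>0$ and $\mathrm{Re}\cosh (x+B_{t})>0$, so both sides of \eqref{;jlt2} are holomorphic in $x$ and bounded by $1$ in modulus, and the identity theorem applies (the boundary point $\theta =\pi /2$, i.e.\ $\la =0$, is recovered by continuity and is just Bougerol's identity). What your route buys is a unification of \eqref{;eqjlt1} and \eqref{;eqjlt2} as the imaginary and real sections of a single holomorphic identity --- precisely the heuristic the paper records in \rref{;rtm3}\,(1); what the paper's route buys is that no continuation needs to be justified, at the price of importing a second input. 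One small slip in your aside: \eqref{;jlt2} is the $\xi =0$ specialization of \lref{;lkey} averaged over the law of $1/Z_{t}$, not the $\la =0$ specialization --- the latter is \eqref{;eqlkeyd}, which underlies \eqref{;jlt1} instead; this does not affect your main argument.
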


\begin{proof}
 We recall from the proof of \pref{;ppar} the equality \eqref{;jlt1}, 
 which may be restated, by symmetry of $\sinh B_{t}$, as 
 \begin{align*}
  \ex \!\left[ 
  \exp \left( 
  -|a\xi |\eb{t}-\frac{\xi ^{2}}{2}A_{t}
  \right) 
  \right] 
  =\ex \!\left[ 
  \exp \left( 
  -|a\xi |\cosh B_{t}
  \right) 
  \cos \bigl( 
  \xi \sqrt{1-a^{2}}\sinh B_{t}
  \bigr) 
  \right] . 
 \end{align*}
 Here $a\in [-1,1]$ and $\xi \in \R $ are arbitrary. Putting 
 $|a\xi |=\la \le |\xi |$ shows \eqref{;eqjlt1}. To see \eqref{;eqjlt2}, 
 we recall from the proof of \pref{;pduf} the equality \eqref{;jlt2}, 
 which asserts that for every $z\ge 0$, 
 \begin{align*}
  &\ex \!\left[ 
  \exp \left( 
  -z\eb{t}\cosh x-\frac{z^{2}}{2}A_{t}
  \right) 
  \right] \\
  &=\ex \!\left[ 
  \exp \left( 
  -z\cosh x\cosh B_{t}-z\sinh x\sinh B_{t}
  \right) 
  \right] \\
  &=\ex \!\left[ 
  \exp \left( -z\cosh x\cosh B_{t}\right) 
  \cosh \bigl( 
  z\sinh x\sinh B_{t}
  \bigr) 
  \right] , 
 \end{align*}
 where the second equality is due to symmetry of $\sinh B_{t}$. 
 Writing $z\cosh x=\la $ and $z=|\xi |$ leads to \eqref{;eqjlt2}
 and finishes the proof. 
\end{proof}

With the help of \tref{;tm1}, we may replace $t$ in the 
statement of \pref{;pjlt} by any positive and finite 
stopping time $\tau $ of $Z$; in fact, 
similarly to \pref{;pjlt}, we see from \pref{;pkey}\,\thetag{i} 
that for every $\mu >0$, 
\begin{equation}\label{;eqjlt3}
 \begin{split}
 &\ex \!\left[ 
 \exp \left( 
 -\la e^{z_{\mu }}-\frac{\xi ^{2}}{2}\frac{e^{z_{\mu }}}{\mu }
 \right) 
 \right] \\
 &=
 \begin{cases}
  \ex \!\left[ 
  \exp (-\la \cosh z_{\mu })
  \cos \bigl( 
  \sqrt{\xi ^{2}-\la ^{2}}\sinh z_{\mu }
  \bigr) 
  \right] & \text{if}\ \la \le |\xi |, \\[2mm]
  \ex \!\left[ 
  \exp (-\la \cosh z_{\mu })
  \cosh \bigl( 
  \sqrt{\la ^{2}-\xi ^{2}}\sinh z_{\mu }
  \bigr) 
  \right] & \text{if}\ \la \ge |\xi |, \\
 \end{cases}
\end{split}
\end{equation}
and if we integrate both sides with respect to 
$\pr (1/Z_{\tau }\in d\mu )$, then the above-mentioned 
replacement is verified thanks to \pref{;pparti}. 
Note that by using \eqref{;idengig} and \eqref{;k0}, 
the left-hand side of \eqref{;eqjlt3} is computed, 
regardless of whether $\la \le |\xi |$ or not, as 
\begin{align*}
 \frac{
 K_{0}\bigl( \sqrt{(\mu +\la )^{2}+\xi ^{2}-\la ^{2}}\,\bigr) 
 }{K_{0}(\mu )}, 
\end{align*}
which 
also reveals the equality 
\begin{align*}
 \ex \!\left[ 
 \exp \left( 
 -\la \eb{\tau }-\frac{\xi ^{2}}{2}A_{\tau }
 \right) 
 \right] 
 =\ex \!\left[ 
 \frac{
 K_{0}\bigl( 
 \sqrt{(1/Z_{\tau }+\la )^{2}+\xi ^{2}-\la ^{2}}\,
 \bigr) 
 }{K_{0}(1/Z_{\tau })}
 \right] . 
\end{align*}
When $\tau =t>0$, these expectations are expressed as 
\begin{align}\label{;jltrepr}
 2\int _{0}^{\infty }\frac{d\mu }{\mu }\,
 K_{0}\bigl( \sqrt{(\mu +\la )^{2}+\xi ^{2}-\la ^{2}}\,\bigr) 
 \Theta _{\mu }(t)
\end{align}
owing to \eqref{;izlaw}. We will return to the last 
expression in \ssref{;expl} of the appendix. 

\subsection{A symmetry for laws of $e^{2B_{\tau }}v+A_{\tau },\,v>0$}\label{;sym}

In this subsection, we prove 

\begin{prop}\label{;psym}
 Let $\tau $ be a stopping time of the process 
 $Z$ such that $0<\tau <\infty $ a.s. Then 
 there takes place the coincidence 
 \begin{equation}\label{;claw}
  \begin{split}
   &\frac{1}{v}\exp \left( -\frac{1}{2v}\right) 
   \pr \left( e^{2B_{\tau }}v+A_{\tau }\in du,\,
   Z_{\tau }\in dw\right) dv\\
   &=\frac{1}{u}\exp \left( -\frac{1}{2u}\right) 
   \pr \left( e^{2B_{\tau }}u+A_{\tau }\in dv,\,
   Z_{\tau }\in dw\right) du 
  \end{split}
 \end{equation}
 as measures on $(0,\infty )^{3}$. 
\end{prop}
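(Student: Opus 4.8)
The plan is to reduce Proposition~\ref{;psym} to the symmetry observed in Remark~\ref{;rtm3}\,\thetag{2}, which in turn is an immediate consequence of the integral identity \eqref{;eqlkeyd}, combined with the conditional description of $(B_{\tau},A_{\tau},Z_{\tau})$ supplied by Proposition~\ref{;pparti}. Concretely, I would test \eqref{;claw} against a bounded measurable function of the three coordinates: it suffices to show that for every bounded measurable $g:(0,\infty)^{3}\to\R$,
\begin{align*}
 \int_{0}^{\infty}\frac{dv}{v}\,e^{-1/(2v)}\,
 \ex\!\left[g\bigl(e^{2B_{\tau}}v+A_{\tau},v,Z_{\tau}\bigr)\right]
 =\int_{0}^{\infty}\frac{du}{u}\,e^{-1/(2u)}\,
 \ex\!\left[g\bigl(u,e^{2B_{\tau}}u+A_{\tau},Z_{\tau}\bigr)\right].
\end{align*}
Because conditionally on $Z_{\tau}=1/\mu$ we have $e^{B_{\tau}}\eqd \mu\gig{0}{1}{\mu}\eqd\mu e^{\zmu}/\mu=e^{\zmu}$ and $A_{\tau}=e^{B_{\tau}}/\mu=e^{\zmu}/\mu$ in law (Proposition~\ref{;pparti} and \eqref{;idengig}), conditioning on $Z_{\tau}$ and disintegrating via $\pr(1/Z_{\tau}\in d\mu)$ reduces everything to proving, for each fixed $\mu>0$,
\begin{align*}
 \int_{0}^{\infty}\frac{dv}{v}\,e^{-1/(2v)}\,
 \ex\!\left[h\bigl(\mu e^{2\zmu}v+e^{\zmu}/\mu,v\bigr)\right]
 =\int_{0}^{\infty}\frac{du}{u}\,e^{-1/(2u)}\,
 \ex\!\left[h\bigl(u,\mu e^{2\zmu}u+e^{\zmu}/\mu\bigr)\right]
\end{align*}
for bounded measurable $h$; here I used that $e^{2B_{\tau}}v+A_{\tau}=e^{B_{\tau}}(e^{B_{\tau}}v+e^{-B_{\tau}}A_{\tau})=e^{B_{\tau}}(e^{B_{\tau}}v+Z_{\tau})$, but it is cleaner to keep the expression $e^{2B_{\tau}}v+A_{\tau}$ and simply substitute the conditional law $B_{\tau}\eqd\zmu$, $A_{\tau}\eqd e^{\zmu}/\mu$.

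Next I would establish this fixed-$\mu$ identity by Fourier inversion, exactly mirroring the symmetry in $\mu\leftrightarrow\xi$ of the function $\int_{\R}dy\,e^{-\mu\cosh y}e^{i\xi\sinh(x+y)}$ noted in Remark~\ref{;rtm3}\,\thetag{2}. The point is that, by \eqref{;zlaw} and a change of variables $v=e^{y}/\mu$ on the right-hand side of \eqref{;eqlkeyd}, one has
\begin{align*}
 2K_{0}(\mu)\,\ex\!\left[e^{i\xi\sinh(x+\zmu)}\right]
 =\int_{0}^{\infty}\frac{dv}{v}\,e^{-1/(2v)}\,e^{-(\mu^{2}+\xi^{2})v/2}\,
 e^{i\mu\xi v\sinh x},
\end{align*}
and more refined versions that keep $e^{\zmu}/\mu$ visible. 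Applying \eqref{;eqlkeyd} once with parameter pair $(\mu,\xi)$ and once with the roles of the two "temporal'' exponentials interchanged gives, after the substitutions above, two expressions for the same quantity that are precisely the Fourier--Laplace transforms of the two sides of the displayed fixed-$\mu$ identity. Injectivity of the Fourier transform in $\xi$ and, where a Laplace variable is present, of the Laplace transform, then yields the equality of the two measures. In carrying this out I would record the joint transform $\ex[\exp(-\tfrac{\xi^{2}}{2}(\mu e^{2\zmu}v+e^{\zmu}/\mu))\cdot(\text{something in the first slot})]$ and read off the symmetry $v\leftrightarrow u$ directly from the $\mu\leftrightarrow\xi$ symmetry of the right-hand side of \eqref{;eqlkeyd} with $x=0$ (so that the factor $e^{i\mu\xi v\sinh x}=1$), which is the cleanest route.

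The main obstacle I anticipate is bookkeeping rather than conceptual: one must be careful that the auxiliary "time'' variable $v$ (resp.\ $u$) enters the transform in a way that, after the change of variables $v=e^{y}/\mu$ inside \eqref{;eqlkeyd}, matches the measure $\tfrac{1}{v}e^{-1/(2v)}\,dv$ appearing in \eqref{;claw}, and that the conditioning on $Z_{\tau}$ via Proposition~\ref{;pparti} is applied to a version of the statement that has already been tested against bounded $g$ so that Fubini is legitimate. A secondary point is to make sure the extra coordinate $Z_{\tau}=1/\mu$ is merely a spectator throughout — this is automatic once the fixed-$\mu$ identity is in hand, since integrating the fixed-$\mu$ identity against $\pr(1/Z_{\tau}\in d\mu)$ and against an arbitrary bounded function of $\mu$ recovers the full three-dimensional statement. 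Finally, the extension from $\tau=t$ to a general finite stopping time $\tau$ of $Z$ costs nothing, because Proposition~\ref{;pparti} is already stated for such $\tau$; no separate limiting argument is needed here.
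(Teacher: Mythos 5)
Your strategy is essentially the paper's own: test \eqref{;claw} against a bounded function, invoke the $(\mu ,\xi )$-symmetry of $\int _{\R }dy\,e^{-\mu \cosh y}e^{i\xi \sinh (x+y)}$ furnished by \lref{;lkey} (this is exactly \eqref{;eqpsym1}), substitute the random $x$, and convert the $y$-integral via the Bougerol-type identity and \pref{;phinge} into
$\int _{0}^{\infty }\frac{dv}{v}e^{-1/(2v)}e^{-\mu ^{2}v/2}\,\ex [f(Z_{\tau })\exp \{ -\frac{\xi ^{2}}{2}(e^{2B_{\tau }}v+A_{\tau })\} ]$,
concluding by injectivity of the transforms. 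The only structural difference is that you disintegrate over $\pr (1/Z_{\tau }\in d\mu )$ via \pref{;pparti} and work with $\zmu $, whereas the paper works unconditionally with $B_{\tau }$ and the identity $(\sinh (y+B_{\tau }),Z_{\tau })\eqd (\eb{\tau }\sinh y+\beta (A_{\tau }),Z_{\tau })$ from \tref{;tm1}; the paper itself records your conditioned version as the equivalent relation \eqref{;idenlaw0} immediately after its proof, so this is a cosmetic distinction.

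Three bookkeeping points to repair. First, your fixed-$\mu $ identity should involve $e^{2\zmu }v+e^{\zmu }/\mu $, not $\mu e^{2\zmu }v+e^{\zmu }/\mu $: you correctly derived $e^{B_{\tau }}\eqd e^{\zmu }$ conditionally on $1/Z_{\tau }=\mu $, so no extra factor of $\mu $ enters the first summand. Second, the letter $\mu $ is doing double duty: the symmetry you need is between the Laplace parameter multiplying $\cosh y$ (dual to the auxiliary time $v$) and the Fourier parameter $\xi $ (which becomes the Laplace parameter dual to $u=e^{2\zmu }v+e^{\zmu }/\mu $), with the conditioning value $1/Z_{\tau }$ a pure spectator; if you literally set the $\cosh y$-exponent equal to the conditioning value, the swap would exchange the law of $\zmu $ with the Fourier variable, which is a different (and not the desired) identity. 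Third, the case $x=0$ of \eqref{;eqlkeyd} does not suffice, because $x$ must then be replaced by the random variable $\zmu $ (or $B_{\tau }$); you need the symmetry for every $x$. This costs nothing, since the cross term $e^{i\mu \xi v\sinh x}$ on the right-hand side of \eqref{;eqlkeyd} is itself symmetric under $\mu \leftrightarrow \xi $, so there is no reason to kill it by specializing to $x=0$.
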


\begin{proof}
 Observe from \eqref{;eqlkeyd} that for any 
 $\mu ,\xi >0$, we have 
 \begin{align}\label{;eqpsym1}
  \int _{\R }dy\,e^{-\mu \cosh y}e^{i\xi \sinh (x+y)}
  =\int _{\R }dy\,e^{-\xi \cosh y}e^{i\mu \sinh (x+y)}
 \end{align}
 for all $x\in \R $. Let $f:(0,\infty )\to \R $ be 
 bounded and measurable. 
 Upon substituting $x$ by $B_{\tau }$ in \eqref{;eqpsym1}, 
 we multiply both sides by $f(Z_{\tau })$ 
 and take the expectation to obtain 
 \begin{equation}\label{;eqpsym2}
  \begin{split}
  &\int _{\R }dy\,e^{-\mu \cosh y}
  \ex \!\left[ 
  f(Z_{\tau })\exp \left\{ 
  i\xi \sinh (y+B_{\tau })
  \right\} 
  \right] \\
  &=\int _{\R }dy\,e^{-\xi \cosh y}
  \ex \!\left[ 
  f(Z_{\tau })\exp \left\{ 
  i\mu \sinh (y+B_{\tau })
  \right\} 
  \right] 
 \end{split}
 \end{equation}
 by Fubini's theorem. Recalling from \tref{;tm1}, as well as from 
 \tref{;tm2}, the identity 
 \begin{align*}
  \left( 
  \sinh (y+B_{\tau }),\, 
  Z_{\tau }
  \right) 
  \eqd \left( 
  \eb{\tau}\!\sinh y+\beta (A_{\tau }), \, 
  Z_{\tau }
  \right) 
 \end{align*}
 for every $y\in \R $, we rewrite the left-hand side of 
 \eqref{;eqpsym2} as 
 \begin{align*}
  &\int _{\R }dy\,e^{-\mu \cosh y}
  \ex \!\left[ 
  f(Z_{\tau })
  e^{i\xi \eb{\tau }\!\sinh y}
  \exp \left( 
  -\frac{\xi ^{2}}{2}A_{\tau }
  \right) 
  \right] \\
  &=\int _{0}^{\infty }\frac{dv}{v}\,
  \exp \left( -\frac{1}{2v}\right) 
  \exp \left( 
  -\frac{\mu ^{2}}{2}v
  \right) 
  \ex \!\left[ 
  f(Z_{\tau })
  e^{i\xi \eb{\tau }\beta (v)}
  \exp \left( 
  -\frac{\xi ^{2}}{2}A_{\tau }
  \right) 
  \right] \\
  &=\int _{0}^{\infty }\frac{dv}{v}\,
  \exp \left( -\frac{1}{2v}\right) 
  \exp \left( 
  -\frac{\mu ^{2}}{2}v
  \right) \ex \!\left[ 
  f(Z_{\tau })\exp \left\{ 
  -\frac{\xi ^{2}}{2}\left( 
  e^{2B_{\tau }}v+A_{\tau }
  \right) 
  \right\} 
  \right] , 
\end{align*}
where, $\beta $ being a Brownian motion independent of $B$, 
we used \pref{;phinge} for the second line. 
Since the last integral agrees, by \eqref{;eqpsym2}, 
with the one in which $\mu $ and $\xi $ are 
exchanged, we obtain the statement of the proposition 
thanks to arbitrariness of $f$, $\mu $ and $\xi $. 
\end{proof}

\begin{rem}\label{;rfinite}
When $\tau $ is only assumed to be finite a.s., the relation 
\eqref{;claw} is valid by restricting probabilities on 
both sides to the event $\{ \tau >0\} $, which is because 
in this case, the third line of the last displayed 
equation in the above proof is further rewritten as 
\begin{align*}
 &2K_{0}\bigl( \sqrt{\mu ^{2}+\xi ^{2}}\bigr) 
 f(0)\pr (\tau =0)\\
 &+
 \int _{0}^{\infty }\frac{dv}{v}\,
  \exp \left( -\frac{1}{2v}\right) 
  \exp \left( 
  -\frac{\mu ^{2}}{2}v
  \right) 
  \ex \!\left[ 
  f(Z_{\tau })\exp \left\{ 
  -\frac{\xi ^{2}}{2}\left( 
  e^{2B_{\tau }}v+A_{\tau }
  \right) 
  \right\} ;\,\tau >0
  \right] 
\end{align*}
by \eqref{;k0} if $f$ is a bounded measurable function on $[0,\infty )$. 
\end{rem}

In view of \psref{;pparti}, the above \pref{;psym} may 
also be seen as a consequence of the relation 
\begin{equation}\label{;idenlaw0}
 \begin{split}
 &\frac{1}{v}\exp \left( 
 -\frac{1}{2v}
 \right) 
 \frac{\pr \left( e^{2z_{\mu}}v+e^{z_{\mu }}/\mu \in du\right) }
 {du}\\
 &=\frac{1}{u}\exp \left( 
 -\frac{1}{2u}
 \right) 
 \frac{\pr \left( e^{2z_{\mu}}u+e^{z_{\mu }}/\mu \in dv\right) }
 {dv},\quad u,v>0, 
 \end{split}
\end{equation}
for every $\mu >0$, which follows readily from \pref{;pkey} 
by the same reasoning as in the above proof. 
A simple computation shows that the left-hand side, and hence 
the right-hand side as well, of \eqref{;idenlaw0} admits the 
representation 
\begin{align*}
 \frac{1}{4K_{0}(\mu )}\frac{1}{uv}
 \biggl( 
 1+\frac{1}{\sqrt{1+4\mu ^{2}uv}}
 \biggr) \exp \left\{ 
 -\frac{u+v}{4uv}
 \left( \sqrt{1+4\mu ^{2}uv}+1\right) 
 \right\} , 
\end{align*}
which is indeed symmetric with respect to $u$ and $v$. 
From this representation together with \eqref{;izlaw}, 
it follows that when $\tau =t>0$, both sides of \eqref{;claw} 
admit the density function 
\begin{align*}
 \frac{1}{2uvw}
 \biggl( 
 1+\frac{1}{\sqrt{1+4uv/w^{2}}}
 \biggr) \exp \left\{ 
 -\frac{u+v}{4uv}
 \left( \sqrt{1+4uv/w^{2}}+1\right) 
 \right\} \Theta _{1/w}(t)
\end{align*}
with respect to the Lebesgue measure $dudvdw$ on $(0,\infty )^{3}$. 

\begin{rem}\label{;rlt}
 For every $v>0$, the Laplace transform of the law of 
 $e^{2B_{\tau }}v+A_{\tau }$ may be expressed as follows: 
 for any $\xi \in \R $, 
 \begin{align*}
  &\ex \!\left[ 
  \exp \left\{ 
  -\frac{\xi ^{2}}{2}\left( 
  e^{2B_{\tau }}v+A_{\tau }
  \right) 
  \right\} 
  \right] \\
  &=\ex \!\left[ 
  \cos \left\{ 
  \xi \sinh \bigl( 
  \argsh \hbe (v)+B_{\tau }
  \bigr) 
  \right\} 
  \right] \\
  &=\int _{\R }\frac{dy}{\sqrt{2\pi v}}\cosh y\exp \left( 
  -\frac{\sinh ^{2}y}{2v}
  \right) 
  \ex \!\left[ 
  \cos \left\{ \xi \sinh (y+B_{\tau })\right\} 
  \right] , 
 \end{align*}
 where in the second line, $\hbe $ is a Brownian motion 
 independent of $B$, and the third line follows by 
 a simple computation. To see the first equality, it 
 suffices to note that by \eqref{;eqtm1}, 
 \begin{align*}
  \sinh \bigl( 
  \argsh \hbe (v)+B_{\tau }
  \bigr) 
  &\eqd 
  \eb{\tau }\hbe (v)+\beta (A_{\tau })\\
  &\eqd \hbe \left( 
  e^{2B_{\tau }}v+A_{\tau }
  \right) , 
 \end{align*}
 where in the second line, we still denote by $\hbe $ a Brownian motion 
 independent of $B$.  
\end{rem}

\subsection{Analytic applications of \pref{;phinge} and \lref{;lkey}}

In this subsection, we give one analytic application of 
\pref{;phinge}, as well as other two analytic applications of 
\lref{;lkey}, one of which uses the case $\xi =0$, and 
the other of which uses the case $\la =0$ as has already 
been applied in \ssref{;prftm3}. 

We start with derivation of the following integral representation 
for the Macdonald function $K_{\nu }$ with $\nu >-1/2$, 
by means of \pref{;phinge}. 

\begin{prop}\label{;pintk3}
 If $\nu >-1/2$, then it holds that 
 \begin{align}\label{;intk3}
  K_{\nu }(z)
  =\frac{\sqrt{\pi }z^{\nu }}{2^{\nu }\Gamma (\nu +1/2)}
  \int _{0}^{\infty }dx\,
  e^{-z\cosh x}\sinh ^{2\nu }\!x,\quad z>0. 
 \end{align}
\end{prop}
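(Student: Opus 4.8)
The plan is to feed the function $f(x)=|x|^{2\nu }$ into \pref{;phinge}. With this choice the integrability hypothesis there reads $\int _{\R }dy\,e^{-\mu \cosh y}|\sinh y|^{2\nu }<\infty $; since $|\sinh y|^{2\nu }\sim |y|^{2\nu }$ near $y=0$ while $e^{-\mu \cosh y}$ decays rapidly as $|y|\to \infty $, this holds exactly when $2\nu >-1$, i.e.\ when $\nu >-1/2$, which is precisely the range asserted in the proposition. The left-hand side of \eqref{;eqphinge} then becomes $2\int _{0}^{\infty }dy\,e^{-\mu \cosh y}\sinh ^{2\nu }\!y$ by evenness of the integrand, so this is the quantity that \eqref{;eqphinge} will express in terms of $K_{\nu }$.

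For the right-hand side of \eqref{;eqphinge}, the key inputs are Brownian scaling $\beta (v)\eqd \sqrt{v}\,N$ with $N$ a standard Gaussian variable, so that $\ex [|\beta (v)|^{2\nu }]=v^{\nu }\,\ex [|N|^{2\nu }]$, together with the elementary absolute moment identity $\ex [|N|^{2\nu }]=2^{\nu }\Gamma (\nu +1/2)/\sqrt{\pi }$, obtained by the substitution $u=x^{2}/2$ in the defining Gaussian integral (and finite exactly when $\nu >-1/2$). Substituting these into \eqref{;eqphinge} reduces its right-hand side to
\begin{equation*}
 \frac{2^{\nu }\Gamma (\nu +1/2)}{\sqrt{\pi }}
 \int _{0}^{\infty }dv\,v^{\nu -1}\exp \left( -\frac{1}{2v}\right)
 \exp \left( -\frac{\mu ^{2}}{2}v\right) .
\end{equation*}

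It then remains to recognize this last $v$-integral as a Macdonald function, which is done by reduction to the integral representation \eqref{;intk2}: the change of variables $w=(\mu ^{2}/2)v$ turns $\int _{0}^{\infty }dv\,v^{\nu -1}e^{-1/(2v)-\mu ^{2}v/2}$ into $(2/\mu ^{2})^{\nu }\int _{0}^{\infty }dw\,w^{\nu -1}e^{-w-\mu ^{2}/(4w)}$, and \eqref{;intk2} with $z=\mu $ identifies $\int _{0}^{\infty }dw\,w^{\nu -1}e^{-w-\mu ^{2}/(4w)}$ with $2^{1-\nu }\mu ^{\nu }K_{\nu }(\mu )$; hence the $v$-integral equals $2\mu ^{-\nu }K_{\nu }(\mu )$. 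Collecting the constants, \eqref{;eqphinge} reads $2\int _{0}^{\infty }dy\,e^{-\mu \cosh y}\sinh ^{2\nu }\!y=2^{\nu +1}\Gamma (\nu +1/2)\,\mu ^{-\nu }K_{\nu }(\mu )/\sqrt{\pi }$, and rearranging gives \eqref{;intk3} with $z=\mu $; since $\mu >0$ is arbitrary, this proves the claim.

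No genuine obstacle is anticipated here: the argument is a one-line application of \pref{;phinge} followed by a classical moment computation and a change of variables into \eqref{;intk2}. The only points requiring care are verifying that the integrability condition of \pref{;phinge} is equivalent to $\nu >-1/2$, and the bookkeeping of the powers of $2$ and of $\mu $ when matching the $v$-integral to \eqref{;intk2}.
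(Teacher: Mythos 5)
Your proposal is correct and follows essentially the same route as the paper: apply \pref{;phinge} to $f(x)=|x|^{2\nu}$, use Brownian scaling and the Gaussian absolute moment $\ex[|N|^{2\nu}]=2^{\nu}\Gamma(\nu+1/2)/\sqrt{\pi}$, and identify the resulting $v$-integral with $K_{\nu}$ via \eqref{;intk2}. The only cosmetic difference is that the paper obtains the moment constant from $|\beta(1)|^{2}\eqd 2\ga_{1/2}$ rather than by direct substitution in the Gaussian integral; your constant bookkeeping checks out.
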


For the representation \eqref{;intk3}, see, e.g., 
\cite[p.~140, Problem~6]{leb}. 

\begin{proof}[Proof of \pref{;pintk3}]
 By symmetry and by \pref{;phinge}, the integral in the 
 right-hand side of \eqref{;intk3} is equal to 
 \begin{align}\label{;eqpintk3}
  &\frac{1}{2}\int _{\R }dx\,e^{-z\cosh x}|\sinh x|^{2\nu } \notag \\
  &=\frac{1}{2}\int _{0}^{\infty }
  \frac{dv}{v}\,\exp \left( 
  -\frac{1}{2v}
  \right) \exp \left( 
  -\frac{z^{2}}{2}v
  \right) \ex \!\bigl[ 
  \left| 
  \beta (v)
  \right| ^{2\nu }
  \bigr] . 
 \end{align}
 Note that when $\nu >-1/2$, we have 
 \begin{align*}
  \ex \!\bigl[ 
  \left| 
  \beta (v)
  \right| ^{2\nu }
  \bigr] 
  &=v^{\nu }\ex \!\bigl[ 
  \left| 
  \beta (1)
  \right| ^{2\nu }
  \bigr] \\
  &=v^{\nu }\times \frac{2^{\nu }}{\sqrt{\pi }}\Gamma (\nu +1/2)
 \end{align*}
 for every $v>0$, where the first line is due to the scaling property of 
 Brownian motion and the second follows readily from the fact 
 that $|\beta (1)|^{2}\eqd 2\ga _{1/2}$. By plugging the last expression 
 into \eqref{;eqpintk3}, we see that the right-hand side of the claimed 
 formula \eqref{;intk3} is written as 
 \begin{align*}
  \frac{1}{2}z^{\nu }\int _{0}^{\infty }dv\,
  v^{\nu -1}\exp \left\{ 
  -\frac{1}{2}\left( 
  \frac{1}{v}+z^{2}v
  \right) 
  \right\} , 
 \end{align*}
 which is equal to $K_{\nu }(z)$ in view of \eqref{;intk2} as well as 
 of \eqref{;gig}. 
\end{proof}

We turn to the two applications of \lref{;lkey}. The first one 
concerns an integral representation of products of two Macdonald 
functions, which is found, e.g., in \cite[p.~140, Problem~7]{leb}. 

\begin{prop}\label{;pint}
 For every $\nu \in \R $, it hold that 
 \begin{align}\label{;intrepr}
  K_{\nu }(z)K_{\nu }(w)=\frac{1}{2}\int _{0}^{\infty }
  \frac{dv}{v}\,\exp \left( 
  -\frac{1}{2v}
  \right) \exp \left( 
  -\frac{z^{2}+w^{2}}{2}v
  \right) K_{\nu }(zwv), \quad z,w>0. 
\end{align}
\end{prop}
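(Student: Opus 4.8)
The plan is to deduce the product formula \eqref{;intrepr} from \lref{;lkey} by integrating the integral identity \eqref{;eqlkey} over the parameter $x$ against a suitable measure, in the same spirit as the derivation of \pref{;pintk3} from \pref{;phinge}. Recall that \eqref{;intk1} tells us that $K_{\nu }(\mu \cdot ) $-type integrals arise by integrating $e^{-\mu \cosh (\cdot )}$ against $e^{-\nu (\cdot )}\,d(\cdot )$; the key observation is that both the left- and right-hand sides of \eqref{;eqlkey} contain a factor in which the variable $x$ enters only through $\cosh x$ and $\sinh x$, so multiplying by $e^{-\nu x}\,dx$ and integrating over $\R $ should produce a Macdonald function on each side.

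First I would set $\la =0$ in \eqref{;eqlkey} (the case recorded as \eqref{;eqlkeyd}), so that for every $\mu >0$ and $\xi \in \R $,
\begin{align*}
 \int _{\R }dy\,e^{-\mu \cosh y}e^{i\xi \sinh (x+y)}
 =\int _{0}^{\infty }\frac{dv}{v}\,e^{-\frac{1}{2v}}
 e^{-\frac{\mu ^{2}+\xi ^{2}}{2}v}e^{i\mu \xi v\sinh x}.
\end{align*}
Next I would multiply both sides by $e^{-\nu x}$ and integrate over $x\in \R $, using Fubini's theorem (justified because everything is absolutely integrable once $\mu >0$, with the $\xi $-oscillation handled by first taking absolute values). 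On the left-hand side, translating $y\mapsto y-x$ and then applying \eqref{;intk1} to the $x$-integral of $e^{-\xi \cosh (\text{something})}$ should not quite work directly because of the $e^{i\xi \sinh (x+y)}$ factor; instead the cleaner route is to replace $i\xi $ by a real parameter. Concretely, I would work with $\la \ge 0$ and $\xi =0$ in \eqref{;eqlkey} — the other special case mentioned in the subsection's opening — so that
\begin{align*}
 \int _{\R }dy\,e^{-\mu \cosh y}e^{-\la \cosh (x+y)}
 =\int _{0}^{\infty }\frac{dv}{v}\,e^{-\frac{1}{2v}}
 e^{-\frac{\mu ^{2}+\la ^{2}}{2}v}e^{-\mu \la v\cosh x}.
\end{align*}
Now multiply by $\tfrac12 e^{-\nu x}$ and integrate over $x\in \R $. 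On the left side, write $\cosh (x+y)$, translate $x\mapsto x-y$, and use \eqref{;intk1} twice (once in $x$, once in $y$, after separating via $\cosh (x+y)$ is awkward — so instead keep $\cosh (x+y)$ and translate the $y$-variable): after the substitution $x\mapsto x-y$ the left-hand integrand becomes $e^{-\mu \cosh y}e^{-\la \cosh x}e^{-\nu (x-y)}$, and integrating first in $x$ gives $2K_{\nu }(\la )$ by \eqref{;intk1}, then integrating in $y$ gives $2K_{\nu }(\la )\cdot 2K_{\nu }(\mu )$ — wait, the remaining $y$-integral is $\int _{\R }e^{-\mu \cosh y}e^{\nu y}\,dy = 2K_{\nu }(\mu )$, again by \eqref{;intk1}. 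Hence the left-hand side becomes $\tfrac12\cdot 2K_{\nu }(\mu )\cdot 2K_{\nu }(\la )=2K_{\nu }(\mu )K_{\nu }(\la )$. On the right-hand side, the $x$-integral is $\tfrac12\int _{\R }e^{-\mu \la v\cosh x}e^{-\nu x}\,dx=K_{\nu }(\mu \la v)$ by \eqref{;intk1}. Therefore
\begin{align*}
 K_{\nu }(\mu )K_{\nu }(\la )=\frac{1}{2}\int _{0}^{\infty }\frac{dv}{v}\,
 e^{-\frac{1}{2v}}e^{-\frac{\mu ^{2}+\la ^{2}}{2}v}K_{\nu }(\mu \la v),
\end{align*}
which is exactly \eqref{;intrepr} with $(z,w)=(\mu ,\la )$ after renaming, valid for $\mu >0$ and $\la >0$ (the case $\la =0$ being degenerate anyway).

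The main obstacle I anticipate is the Fubini justification and the handling of the boundary case $\la =0$ or $\nu $ large: one must check that $\int _{\R }dx\int _{0}^{\infty }\tfrac{dv}{v}\,e^{-\nu x}e^{-1/(2v)}e^{-(\mu ^{2}+\la ^{2})v/2}e^{-\mu \la v\cosh x}$ converges absolutely, which it does for $\mu ,\la >0$ since $e^{-\mu \la v\cosh x}$ decays in $|x|$ like $e^{-\mu \la v|x|/2}$ uniformly and the $v$-integral is a $K_0$-type integral; the factor $e^{-\nu x}$ is dominated once we split into $x\ge 0$ and $x<0$ and use $\cosh x\ge e^{|x|}/2$. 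A secondary point is that \eqref{;intk1} requires its argument to be positive, so one needs $\mu \la v>0$, which holds throughout the integration; and the identity \eqref{;intrepr} as stated for all $\nu \in \R $ follows since $K_{\nu }=K_{-\nu }$ and the argument is symmetric in the sign of $\nu $. Once these routine integrability checks are dispatched, the proof is a two-line application of \lref{;lkey} with $\xi =0$ followed by three invocations of \eqref{;intk1}.
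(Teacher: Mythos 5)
Your argument is correct and is essentially the paper's own proof run in the opposite direction: the paper starts from the right-hand side of \eqref{;intrepr}, inserts the representation \eqref{;intk1} for $K_{\nu }(zwv)$, and applies \lref{;lkey} with $\xi =0$ to the $v$-integral, whereas you start from \lref{;lkey} with $\xi =0$ and integrate against $\tfrac12 e^{-\nu x}\,dx$ — the same two ingredients and the same Fubini swaps, merely reorganized. The detour through the $\la =0$ case at the start is a false start you correctly abandon, and the final bookkeeping (including the implicit use of $K_{\nu }=K_{-\nu }$ when evaluating $\int _{\R }e^{-\mu \cosh y}e^{\nu y}\,dy$) checks out.
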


\begin{proof}
 By the integral representation \eqref{;intk1} of $K_{\nu }$, 
 we plug the expression 
 \begin{align*}
  K_{\nu }(zwv)=\frac{1}{2}\int _{\R }dx\,e^{-zwv\cosh x}e^{-\nu x}
 \end{align*}
 into the right-hand side of \eqref{;intrepr}. Then by Fubini's theorem, 
 it is rewritten as 
 \begin{align}\label{;eqpint}
  \frac{1}{4}\int _{\R }dx\,e^{-\nu x}\int _{0}^{\infty }
  \frac{dv}{v}\,\exp \left( 
  -\frac{1}{2v}
  \right) \exp \left( 
  -\frac{z^{2}+w^{2}}{2}v
  \right) e^{-zwv\cosh x}. 
 \end{align}
 Applying \lref{;lkey} with $\mu =z$, $\la =w$ and $\xi =0$, we 
 see that the integral with respect to $v$ above is equal to 
 \begin{align*}
  \int _{\R }dy\,e^{-z\cosh y}e^{-w\cosh (x+y)}. 
 \end{align*}
 Therefore by using Fubini's theorem again, the expression 
 \eqref{;eqpint} is further rewritten as 
 \begin{align*}
  \frac{1}{2}\int _{\R }dy\,e^{-z\cosh y}e^{\nu y}\times 
  \frac{1}{2}\int _{\R }dx\,e^{-w\cosh (x+y)}e^{-\nu (x+y)}, 
 \end{align*}
 which agrees with the left-hand side of \eqref{;intrepr} 
 in view of \eqref{;intk1}. 
\end{proof}

The second application of \lref{;lkey} deals with an integral representation 
for the density function of a given symmetric random variable. 

\begin{prop}\label{;pden}
 Let $X$ be a symmetric random variable and suppose that it 
 satisfies 
 \begin{align}\label{;cond}
  \int _{\R }dx\,e^{-\mu \cosh x}\!\int _{\R }d\xi\,
  \bigl| 
  \ex \!\left[ 
  \cos \left\{ 
  \xi \sinh (x+X)
  \right\} 
  \right] 
  \bigr| <\infty  \quad \text{for any $\mu >0$.}
 \end{align}
 Then $X$ admits the density function $\vp $ given by 
 \begin{align}\label{;denrepr1}
  \vp (x)=\frac{1}{2\pi }\int _{\R }d\xi\,
  \ex \!\left[ 
  \cos \left\{ 
  \xi \sinh (x+X)
  \right\} 
  \right] ,\quad x\in \R . 
\end{align} 
\end{prop}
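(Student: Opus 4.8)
The plan is to verify that $\vp $, as defined by \eqref{;denrepr1}, has the correct ``$\cosh $-Laplace transform'', i.e.\ that
\[
 \int _{\R }e^{-\mu \cosh x}\vp (x)\,dx=\ex \!\left[ e^{-\mu \cosh X}\right]
 \qquad \text{for every }\mu >0,
\]
and then to recover the assertion from the symmetry of $\vp $ and the injectivity of the Laplace transform. Two preliminary remarks are in order. First, since $|\vp (x)|\le (2\pi )^{-1}\int _{\R }d\xi \,\bigl| \ex [\cos \{ \xi \sinh (x+X)\} ]\bigr| $, hypothesis \eqref{;cond} guarantees that $e^{-\mu \cosh x}\vp (x)$ is absolutely integrable on $\R $ for each $\mu >0$, so that $\vp $ is locally integrable. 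Second, $\vp $ is symmetric: since $X\eqd -X$, $\sinh $ is odd and $\cos $ is even, replacing $X$ by $-X$ in \eqref{;denrepr1} shows $\vp (-x)=\vp (x)$.

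For the main computation, I would start from $2\pi \int _{\R }e^{-\mu \cosh x}\vp (x)\,dx$, apply Fubini's theorem (legitimate by \eqref{;cond}) to interchange the $x$- and $\xi$-integrations, and, for each fixed $\xi $, pull the expectation outside the $x$-integral (using $\ex [\,|\cos \{ \cdots \} |\,]\le 1$), so that the relevant inner integral is
\[
 \int _{\R }dx\,e^{-\mu \cosh x}\cos \bigl\{ \xi \sinh (x+X)\bigr\}
 =\Re \int _{\R }dx\,e^{-\mu \cosh x}e^{i\xi \sinh (X+x)}.
\]
Here \lref{;lkey} enters in the case $\la =0$, that is, the identity \eqref{;eqlkeyd} applied pathwise with its parameter frozen at the realized value $X$ and its integration variable playing the role of $x$; it rewrites the last expression as $\int _{0}^{\infty }\frac{dv}{v}\exp (-\tfrac{1}{2v})\exp (-\tfrac{\mu ^{2}+\xi ^{2}}{2}v)\cos (\mu \xi v\sinh X)$. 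After re-applying Fubini---now justified by $\int _{\R }d\xi \int _{0}^{\infty }\frac{dv}{v}\exp (-\tfrac{1}{2v})\exp (-\tfrac{\mu ^{2}+\xi ^{2}}{2}v)=2\int _{\R }d\xi \,K_{0}\bigl( \sqrt{\mu ^{2}+\xi ^{2}}\bigr) <\infty $ by \eqref{;k0} and the exponential decay of $K_{0}$---the $\xi$-integral is the elementary Gaussian cosine integral, equal to $\sqrt{2\pi /v}\,\exp (-\tfrac{\mu ^{2}v\sinh ^{2}X}{2})$; absorbing the surviving factor $\exp (-\tfrac{\mu ^{2}v}{2})$ turns this into $\sqrt{2\pi /v}\,\exp (-\tfrac{\mu ^{2}\cosh ^{2}X}{2}v)$. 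Finally the remaining integral $\int _{0}^{\infty }v^{-3/2}\exp (-\tfrac{1}{2v})\exp (-\tfrac{\mu ^{2}\cosh ^{2}X}{2}v)\,dv$ is evaluated, via the density \eqref{;taulaw} of $\tau _{1}(B)$ together with \eqref{;lt}, as $\sqrt{2\pi }\,\exp (-\mu \cosh X)$ (using $\mu \cosh X>0$), and collecting all constants produces exactly $2\pi \ex [e^{-\mu \cosh X}]$, as desired.

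It remains to pass from the displayed Laplace identity to the conclusion. Both $\vp (x)\,dx$ (a priori a locally finite, possibly signed measure) and the law of $X$ are symmetric and have the same integral against $e^{-\mu \cosh x}$ for all $\mu >0$; pushing forward the restriction to $[0,\infty )$ under the homeomorphism $x\mapsto \cosh x$ of $[0,\infty )$ onto $[1,\infty )$ (a possible atom at the origin becoming a point mass at $1$, disjoint from the rest of the pushed-forward measure), the two images have the same Laplace transform on $(0,\infty )$, hence coincide by injectivity; by symmetry the original measures coincide. In particular $\vp \ge 0$ a.e.\ and $\int _{\R }\vp =1$, so $\vp $ is the density of $X$.

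I expect the principal technical burden to be the careful bookkeeping of Fubini's theorem: the first interchange is exactly what hypothesis \eqref{;cond} is designed for, but the later interchanges---moving the expectation and the $\xi$-integral past the $v$-integral---rely on the separate integrability bound through $K_{0}$ noted above, and one should keep in mind that \eqref{;eqlkeyd} is a deterministic identity to be invoked pathwise, with its parameter set equal to $X(\omega )$, before expectations are taken.
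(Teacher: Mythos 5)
Your proposal is correct and follows essentially the same route as the paper: both rest on the $\la =0$ case \eqref{;eqlkeyd} of \lref{;lkey}, the Gaussian integral in $\xi $, and the evaluation of the resulting $v$-integral via \eqref{;taulaw} and \eqref{;lt} to identify $\int _{\R }e^{-\mu \cosh x}\vp (x)\,dx$ with $\ex [e^{-\mu \cosh X}]$, concluding by injectivity of the Laplace transform and symmetry. The only (harmless) differences are that you take real parts pathwise where the paper instead discards the sine term by oddness in $y$, and that you spell out the final injectivity-plus-symmetry step in more detail than the paper does.
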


\begin{proof}
 What we are going to use is \lref{;lkey} with $\la =0$, namely 
 the integral identity \eqref{;eqlkeyd}. We substitute $x$ by $X$ 
 and take the expectation on both sides to get 
 \begin{equation}\label{;eqpden1}
  \begin{split}
  &\int _{\R }dy\,e^{-\mu \cosh y}
  \ex \!\left[ 
  \cos \left\{ 
  \xi \sinh (y+X)
  \right\} 
  \right] \\
  &=\ex \!\left[ 
  \int _{0}^{\infty }\frac{dv}{v}\,\exp \left( 
  -\frac{1}{2v}
  \right) \exp \left( 
  -\frac{\mu ^{2}+\xi ^{2}}{2}v
  \right) e^{i\mu \xi v\sinh X}
  \right] , 
  \end{split}
 \end{equation}
 where on the left-hand side, we used Fubini's theorem and 
 the fact that the mapping 
 $
 \R \ni y\mapsto \ex \!\left[ 
 \sin \left\{ \xi \sinh (y+X)\right\} \right] 
 $
 is an odd function due to symmetry of $X$. 
 We integrate both sides of the above equality with respect to 
 $\xi $ over $\R $. Then by the condition \eqref{;cond} and 
 by Fubini's theorem, the left-hand side of \eqref{;eqpden1} 
 turns into  
 \begin{align}\label{;eqpden2}
  2\pi \int _{\R }dy\,e^{-\mu \cosh y}\vp (y)
 \end{align}
 with function $\vp $ given in \eqref{;denrepr1}. On the other 
 hand, as for the right-hand side of \eqref{;eqpden1}, by 
 observing that 
 \begin{align*}
  \int _{\R }d\xi \,\exp \left( 
  -\frac{\xi ^{2}}{2}v
  \right) e^{i\mu \xi v\sinh X}
  =\sqrt{\frac{2\pi }{v}}\exp \left( 
  -\frac{\mu ^{2}\sinh ^{2}X}{2}v
  \right) , 
 \end{align*}
 Fubini's theorem allows us to compute 
 \begin{align*}
  &2\pi \ex \!\left[ 
  \int _{0}^{\infty }\frac{dv}{\sqrt{2\pi v^{3}}}\,
  \exp \left( 
  -\frac{1}{2v}
  \right) 
  \exp \left( 
  -\frac{\mu ^{2}\cosh ^{2}X}{2}v
  \right) 
  \right] \\
  &=2\pi \ex \!\left[ 
  \exp \left\{ 
  -\frac{\mu ^{2}\cosh ^{2}X}{2}\tau _{1}(\hB )
  \right\} 
  \right] \\
  &=2\pi \ex \!\left[ 
  \exp \left( -\mu \cosh X\right) 
  \right] 
 \end{align*}
 thanks to \eqref{;taulaw} and \eqref{;lt}, where in the second 
 line, $\hB $ denotes a Brownian motion independent of $X$. 
 Since the last expression 
 agrees with \eqref{;eqpden2} for any $\mu >0$, we obtain the 
 conclusion owing to symmetry of $X$. 
\end{proof}

We may compare the above proposition with the well-known fact 
(see, e.g., \cite[Theorem~3.3.5]{dur}) that if a generic random variable 
$X$ satisfies the condition 
$\int _{\R }d\xi \,\bigl| \ex [e^{i\xi X}]\bigr| <\infty $, then it 
admits the density function 
\begin{align}\label{;denrepr2}
 \vp (x)
 =\frac{1}{2\pi }\int _{\R }d\xi \,e^{-ix\xi }\ex \!\left[ e^{i\xi X}\right] ,
 \quad x\in \R . 
\end{align}
If we apply this general fact to a symmetric $X$, then the integral 
representation \eqref{;denrepr2} may be written as 
\begin{align}\label{;denrepr2d}
 \vp (x)&=\frac{1}{2\pi }\int _{\R }d\xi \,\cos (x\xi )
 \ex \!\left[ \cos (\xi X)\right] \notag \\
 &=\frac{1}{2\pi }\int _{\R }d\xi \,\ex \!\left[ 
 \cos \{ \xi (x+X)\}  
 \right] ,\quad x\in \R . 
\end{align}
\pref{;pden} asserts that for symmetric random variables $X$ 
of a certain class, two expressions \eqref{;denrepr1} and 
\eqref{;denrepr2d} agree. 

We give examples of symmetric random variables satisfying the 
condition \eqref{;cond}. 

\begin{exm}\label{;esymX}
\thetag{1} Brownian motion at fixed time $t>0$ fulfills 
\eqref{;cond}. To verify it, note that by the 
identity \eqref{;boug2}, 
\begin{align*}
 \bigl| 
 \ex \!\left[ \cos \left\{ \xi \sinh (x+B_{t})\right\} 
 \right] 
 \bigr| 
 &=\left| 
 \ex \!\left[ 
 \cos \left( \xi \eb{t}\sinh x\right) 
 \exp \left( 
 -\frac{\xi ^{2}}{2}A_{t}
 \right) 
 \right] 
 \right| \\
 &\le 
 \ex \!\left[ 
 \exp \left( 
 -\frac{\xi ^{2}}{2}A_{t}
 \right) 
 \right] 
\end{align*}
for any $x,\xi \in \R $. Therefore the double integral 
in the condition \eqref{;cond} is dominated by 
\begin{align*}
 &\int _{\R }dx\,e^{-\mu \cosh x}\times \int _{\R }d\xi \,
 \ex \!\left[ 
 \exp \left( 
 -\frac{\xi ^{2}}{2}A_{t}
 \right) 
 \right] \\
 &=2\sqrt{2\pi }K_{0}(\mu )\ex \!\left[ 
 \frac{1}{\sqrt{A_{t}}}
 \right] , 
\end{align*}
which is finite by the fact that 
\begin{align*}
 \ex \!\left[ 
 \frac{1}{\sqrt{A_{t}}}
 \right] =\frac{1}{\sqrt{t}}
\end{align*}
for any $t>0$. The above fact may be seen from the 
identity \eqref{;boug2} in such a way that two density 
functions in $y\in \R $: 
\begin{align}
 \frac{\pr \left( 
 \eb{t}\sinh x+\beta (A_{t})\in dy
 \right) }
 {dy}
 &=\ex \!\left[ 
 \frac{1}{\sqrt{2\pi A_{t}}}
 \exp \left\{ 
 -\frac{\left( 
 y-\eb{t}\sinh x
 \right) ^{2}}{2A_{t}}
 \right\} 
 \right] \label{;eden1}
\intertext{and }
 \frac{
 \pr \left( \sinh (x+B_{t})\in dy\right) 
 }
 {dy}
 &=\frac{1}{\sqrt{2\pi t}}\frac{1}{\sqrt{1+y^{2}}}
 \exp \left\{  
 -\frac{(\argsh y-x)^{2}}{2t}
 \right\} \label{;eden2}
\end{align}
agree for every $x\in \R $ and evaluating them at 
$x=y=0$ leads to the claimed equality. 
It should also be noted that since $X=B_{t}$ fulfills 
$\int _{\R }d\xi \,\bigl| \ex [e^{i\xi X}]\bigr| <\infty $, 
the expression \eqref{;denrepr2d} is valid as well; 
in fact, a direct computation shows that 
\begin{align*}
 \frac{1}{2\pi }\int _{\R }d\xi \,\cos (x\xi )
 \exp \left( 
 -\frac{t}{2}\xi ^{2}
 \right) 
 =\frac{1}{\sqrt{2\pi t}}\exp \left( 
 -\frac{x^{2}}{2t}
 \right) ,\quad x\in \R . 
\end{align*}

\noindent 
\thetag{2} From the above argument in \thetag{1}, it is now clear 
that Brownian motion evaluated at an independent random time 
$T$ satisfying 
\begin{align*}
 \ex \!\left[ 
 \frac{1}{\sqrt{T}}
 \right] <\infty , 
\end{align*}
fulfills \eqref{;cond}. A typical example of such 
situations is given by a symmetric Cauchy variable 
$aC$ for every $a\neq 0$, 
because it holds that  
$aC\eqd \beta (\tau _{a}(\hB ))$ 
as seen in \eqref{;cauchy} and that by \eqref{;taulaw}, 
$\tau _{a}(\hB )$ satisfies 
\begin{align*}
 \ex \!\left[ 
 \frac{1}{\sqrt{\mathstrut \tau _{a}(\hB )}}
 \right] 
 &=\frac{|a|}{\sqrt{2\pi }}\int _{0}^{\infty }\frac{dv}{v^{2}}\,
 \exp \left( 
 -\frac{a^{2}}{2v}
 \right) \\
 &<\infty . 
\end{align*}
Among other examples,  we have 
$(1/Z_{t})\sinh B_{t}$ and $\bigl( 1/\sqrt{Z_{t}}\bigr) \sinh B_{t}$ 
for each $t>0$, as is deduced from identities \eqref{;eqrstop2} 
and \eqref{;eqrstop3}. 

\noindent 
\thetag{3} For each $u>0$, the random variable $z_{u}$ fulfills 
\eqref{;cond}. (Here we replace by $u$ the exponent 
$\mu $ in the definition \eqref{;zlaw} of $z_{\mu }$ to avoid confusion.) 
More generally, if a symmetric random variable $X$ is such that 
\begin{align*}
 \pr (X\in dx)
 =\int _{0}^{\infty }m(du)\,\pr (z_{u}\in dx),\quad x\in \R , 
\end{align*}
for some probability measure $m$ on $(0,\infty )$ satisfying 
\begin{align}\label{;condd}
 \int _{0}^{\infty }m(du)\,\sqrt{u}\ex \!\left[ 
 e^{-z_{u}/2}
 \right] <\infty , 
\end{align}
then $X$ fulfills \eqref{;cond}. Examples in \thetag{1} and 
\thetag{2} above may be seen as consequences of this general 
statement; indeed, if we take $m(du)=\pr (1/Z_{t}\in du)$ 
for $t>0$, then by \pref{;pparti}, 
\begin{align*}
 \int _{0}^{\infty }m(du)\,
 \sqrt{u}\ex \!\left[ 
 e^{-z_{u}/2}
 \right] 
 =\ex \!\left[ 
 \frac{1}{\sqrt{A_{t}}}
 \right] . 
\end{align*}
In order to draw the above condition \eqref{;condd}, we use 
in place of \eqref{;boug2} the identity 
\begin{align*}
 \sinh (x+z_{u})\eqd 
 e^{z_{u}}\sinh x+\beta (e^{z_{u}}/u)
\end{align*}
adopted from \pref{;pkey} and 
argue along the same lines as in \thetag{1} to see that 
\begin{align*}
 \int _{\R }d\xi \,\bigl| 
 \ex \!\left[ 
 \cos \left\{ 
 \xi \sinh (x+z_{u})
 \right\} 
 \right] 
 \bigr| 
 &\le \int _{\R }d\xi \,\ex \!\left[ 
 \exp \left( 
 -\frac{\xi ^{2}}{2}\frac{e^{z_{u}}}{u}
 \right) 
 \right] \\
 &=\sqrt{2\pi u}\ex \!\left[ 
 e^{-z_{u}/2}
 \right] , 
\end{align*}
which leads to \eqref{;condd}. 
If we note the identities 
\begin{align*}
 \ex \!\left[ 
 e^{-z_{u}/2}
 \right] 
 &=\frac{K_{1/2}(u)}{K_{0}(u)}\\
 &=\sqrt{
 \frac{\pi }{2u}
 }\frac{e^{-u}}{K_{0}(u)}, 
\end{align*}
then the condition \eqref{;condd} is restated as 
$\int _{0}^{\infty }m(du)\,e^{-u}/K_{0}(u)<\infty $. In the last 
displayed equations, we used \eqref{;intk1} for the first line 
and the explicit expression of $K_{1/2}$ 
(see, e.g., \cite[Equation~(5.8.5)]{leb}) for the second. 
\end{exm}

As already seen partly in \thetag{1} of the above example, 
in the case $X=B_{t}$ for $t>0$, the right-hand side of 
\eqref{;denrepr1} is expressed, by the identity 
\eqref{;boug2}, as 
\begin{align*}
 \frac{1}{2\pi }\int _{\R }d\xi \,\ex \!\left[ 
 \cos \left( \xi \eb{t}\sinh x\right) 
 \exp \left( 
 -\frac{\xi ^{2}}{2}A_{t}
 \right) 
 \right] . 
\end{align*}
By Fubini's theorem, this expression is 
equal to 
\begin{align*}
  \frac{1}{2\pi }\ex \!\left[ 
  \int _{\R }d\xi \,\cos \left( \xi \eb{t}\sinh x\right) 
  \exp \left( 
  -\frac{\xi ^{2}}{2}A_{t}
  \right) 
  \right] 
  =\ex \!\left[ 
  \frac{1}{\sqrt{2\pi A_{t}}}\exp 
  \left( 
  -\frac{e^{2B_{t}}\sinh ^{2}x}{2A_{t}}
  \right) 
  \right] , 
\end{align*}
and hence \pref{;pden} entails the relation 
\begin{align}\label{;grel}
 \ex \!\left[ 
  \frac{1}{\sqrt{2\pi A_{t}}}\exp 
  \left( 
  -\frac{e^{2B_{t}}\sinh ^{2}x}{2A_{t}}
  \right) 
  \right] 
  =\frac{1}{\sqrt{2\pi t}}\exp \left( 
  -\frac{x^{2}}{2t}
  \right) 
\end{align}
for every $t>0$ and $x\in \R $. The above probabilistic representation 
for the Gaussian kernel is also obtained by evaluating 
\eqref{;eden1} and \eqref{;eden2} at $y=0$. Moreover, thanks to 
the formula \eqref{;jl}, the left-hand side of \eqref{;grel} is 
calculated as 
\begin{align*}
 &\int _{0}^{\infty }\frac{dv}{v}\int _{0}^{\infty }
 \frac{du}{u}\,\exp \left( 
 -\frac{1+u^{2}}{2v}
 \right) \Theta _{u/v}(t)
 \frac{1}{\sqrt{2\pi v}}\exp \left( 
 -\frac{u^{2}\sinh ^{2}x}{2v}
 \right) \\
 &=\int _{0}^{\infty }\frac{du}{u}\,\Theta _{u}(t)
 \int _{0}^{\infty }\frac{dv}{\sqrt{2\pi v^{3}}}
 \exp \left( 
 -\frac{1}{2v}
 \right) \exp \left( 
 -\frac{u^{2}\cosh ^{2}x}{2}v
 \right) \\
 &=\int _{0}^{\infty }\frac{du}{u}\,\Theta _{u}(t)e^{-u\cosh x}, 
\end{align*}
where for the second line, we changed the variable 
$u$ into $vu$ and used Fubini's theorem, 
and for the third line, we used \eqref{;taulaw} and 
\eqref{;lt}. Therefore in addition to \eqref{;ltt}, we have 
another characterization of 
the function $\Theta _{r}(t),\,r>0,t>0$, in terms of the 
Laplace transform in variable $r$: 
\begin{align}\label{;ltr}
 \int _{0}^{\infty }\frac{dr}{r}\,\Theta _{r}(t)e^{-r\cosh x}
 =\frac{1}{\sqrt{2\pi t}}\exp \left( 
  -\frac{x^{2}}{2t}
  \right) ,\quad t>0,\,x\in \R .
\end{align}
We remark that this relation is stated in 
\cite[Proposition~4.5\,(i)]{myPI} and, 
as observed in \cite[Proposition~4.2]{mySI}, 
also follows by simply integrating both sides of 
\eqref{;jl} with respect to $v$ over $(0,\infty )$. 
As seen above, the relation \eqref{;ltr} explains the 
coincidence of the two expressions 
\eqref{;eden1} and \eqref{;eden2} in the case $y=0$. 
It is not hard to see  similarly that their coincidence in the case 
$y\neq 0$ is also reduced to the above relation; in fact, 
by using \eqref{;jl}, 
the expectation in \eqref{;eden1} is calculated as 
\begin{align*}
 \frac{1}{\sqrt{1+y^{2}}}
 \int _{0}^{\infty }\frac{du}{u}\,\Theta _{u}(t)
 \exp \left\{ 
 -u\cosh \left( \argsh y-x\right) 
 \right\} , 
\end{align*}
which agrees with \eqref{;eden2} thanks to the relation \eqref{;ltr}. 
In \ssref{;dhw} of the appendix, we derive from \eqref{;ltr} 
the integral representation \eqref{;hw} for $\Theta _{r}(t)$. 

\section{Concluding Remarks}\label{;cr}

In this paper we have shown, with the help of \pref{;pparti} 
due to Matsumoto and Yor, that Bougerol's identity 
\eqref{;boug1} as well as its extensions in \tsref{;tm1} and 
\ref{;tm2} are obtained from relevant properties of  
random variables $z_{\mu },\,\mu >0$, defined in 
\eqref{;zlaw}. In particular, we have shown that for every 
fixed $t>0$ and $x\in \R $, there takes place the coincidence 
of joint laws 
\begin{align}\label{;cjl}
 \left( 
  \eb{t}\sinh x+\beta (A_{t}), \, 
  e^{-B_{t}}A_{t}
  \right) 
  \eqd 
  \left( 
  \sinh (x+B_{t}),\,e^{-B_{t}}A_{t}
  \right) .  
\end{align}
In view of an expression \eqref{;anothery} of the process 
$Y^{x}$ defined by \eqref{;defY}, the left-hand side of 
\eqref{;cjl} is identical in law with 
\begin{align*}
 \left( Y^{x}_{t},\,e^{-B_{t}}A_{t}\right) . 
\end{align*}
Indeed, we have 
\begin{align*}
 \left( Y^{x}_{t},\,e^{-B_{t}}A_{t}\right) 
 &\eqd 
 \left( 
 e^{B_{t}}\sinh x+e^{B_{t}}\beta (e^{-2B_{t}}A_{t}),\,
 e^{-B_{t}}A_{t}
 \right) \\
 &\eqd 
 \left( \eb{t}\sinh x+\beta (A_{t}),\,e^{-B_{t}}A_{t}\right) , 
\end{align*}
where the first line follows from the relation \eqref{;2-dim} and 
the second is due to the scaling property of Brownian motion 
as was seen in \eqref{;ideny}. Recall that the process $\beta ^{x}$ 
defined in \eqref{;defbex} is a Brownian motion. 
Then, since the right-hand side of \eqref{;cjl} 
is identical in law with 
\begin{align*}
 \left( Y^{x}_{t},\,
 e^{-\beta ^{x}_{t}}\!\int _{0}^{t}e^{2\beta ^{x}_{s}}ds\right) 
\end{align*}
in view of the other expression \eqref{;expliy} of $Y^{x}$, 
we may rephrase \eqref{;cjl} as 
\begin{align}\label{;cjld}
 \left( Y^{x}_{t},\,e^{-B_{t}}\!\int _{0}^{t}e^{2B_{s}}ds\right) 
 \eqd 
 \left( 
 Y^{x}_{t},\,e^{-\beta ^{x}_{t}}\!\int _{0}^{t}e^{2\beta ^{x}_{s}}ds
 \right) . 
\end{align}
It would be interesting to give a direct explanation to the 
identity \eqref{;cjld} by means of It\^o's formula and SDEs, 
which we think should lead us to a deeper understanding, 
such as another proof, of the explicit formula \eqref{;jl} for the 
joint law of $\eb{t}$ and $A_{t}$. 

\appendix 
\section*{Appendix}
\renewcommand{\thesection}{A}
\setcounter{equation}{0}
\setcounter{prop}{0}
\setcounter{lem}{0}
\setcounter{rem}{0}

We append some explorations as to the bivariate function 
$\Theta _{r}(t),\,r>0,t>0$, that is characterized by \eqref{;ltt} 
as well as by \eqref{;ltr}, and admits the integral representation 
\eqref{;hw}. 

\subsection{An integral equation for $\Theta _{r}(t)$}\label{;ie}

In the first part of the appendix, we derive an integral equation 
satisfied by $\Theta _{r}(t)$. To begin with, we note that the 
integral representation \eqref{;intk1} for the Macdonald function 
$K_{\nu }$ is valid if $\nu $ is in $\C$, the complex plane, and that 
when $\nu =i\xi ,\,\xi \in \R $, it reads 
\begin{align}\label{;intk1i}
 K_{i\xi  }(z)
 =\frac{1}{2}\int _{\R }dx\,e^{-z\cosh x}\cos (\xi x),\quad 
 z>0 . 
\end{align}

\begin{prop}\label{;pie}
 For every $r>0$ and $t>0$, it holds that 
 \begin{align}\label{;epie}
  \Theta _{r}(t)=\frac{r}{t}\exp \left( 
  \frac{\pi ^{2}}{2t}
  \right) 
  \int _{0}^{\infty }
  \frac{du}{u(u+r)}\,K_{\pi i/t}(u+r)\Theta _{u}(t). 
 \end{align}
\end{prop}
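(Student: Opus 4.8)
The plan is to reduce the right-hand side of \eqref{;epie} to the explicit Hartman--Watson integral \eqref{;hw}, using the integral representation \eqref{;intk1i} for $K_{\pi i/t}$ together with the Laplace-transform characterization \eqref{;ltr} of $\Theta $.

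First I would insert \eqref{;intk1i} with $\xi =\pi /t$, writing $K_{\pi i/t}(u+r)=\frac12\int _{\R }e^{-(u+r)\cosh x}\cos (\pi x/t)\,dx$, and dispose of the factor $(u+r)^{-1}$ by the elementary identity
\begin{align*}
 \frac{e^{-(u+r)\cosh x}}{u+r}=\int _{|x|}^{\infty }\sinh w\,e^{-(u+r)\cosh w}\,dw
\end{align*}
(substitute $s=\cosh w$). After a Fubini exchange of the $x$- and $w$-integrations, over the region $\{ w>0,\ -w<x<w\} $, the inner integral collapses via $\int _{-w}^{w}\cos (\pi x/t)\,dx=\frac{2t}{\pi }\sin (\pi w/t)$, giving
\begin{align*}
 \frac{1}{u+r}K_{\pi i/t}(u+r)=\frac{t}{\pi }\int _{0}^{\infty }\sinh w\,\sin (\pi w/t)\,e^{-(u+r)\cosh w}\,dw.
\end{align*}

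Substituting this back and exchanging the $u$- and $w$-integrations by Fubini's theorem once more, the $u$-integral becomes $\int _{0}^{\infty }\frac{du}{u}\,\Theta _{u}(t)e^{-u\cosh w}$, which by \eqref{;ltr} equals $(2\pi t)^{-1/2}e^{-w^{2}/(2t)}$. Collecting the constants via $\frac{r}{t}e^{\pi ^{2}/(2t)}\cdot \frac{t}{\pi }\cdot \frac{1}{\sqrt{2\pi t}}=\frac{r}{\sqrt{2\pi ^{3}t}}e^{\pi ^{2}/(2t)}$, the right-hand side of \eqref{;epie} becomes exactly
\begin{align*}
 \frac{r}{\sqrt{2\pi ^{3}t}}\int _{0}^{\infty }\exp \left( \frac{\pi ^{2}-w^{2}}{2t}\right) e^{-r\cosh w}\sinh w\,\sin (\pi w/t)\,dw,
\end{align*}
which is $\Theta _{r}(t)$ by \eqref{;hw}.

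The step demanding care is the justification of the two applications of Fubini's theorem, i.e.\ the absolute convergence of the triple integrals. This follows from the facts that $\Theta _{u}(t)/u$ is bounded near $u=0$ and is $O(e^{-u})$ as $u\to \infty $ (both read off from \eqref{;hw}, using $\cosh w\ge 1$), that $K_{0}(u+r)$ stays bounded for $u\ge 0$ when $r>0$, and that $\int _{\R }e^{-(u+r)\cosh x}\,dx=2K_{0}(u+r)$ by \eqref{;intk1}; together with the hyperbolic substitutions above, these bounds render every intermediate integral absolutely convergent.
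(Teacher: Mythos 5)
Your proof is correct and is essentially the paper's argument run in reverse: the paper starts from $\Theta_r(t)$ via \eqref{;hw}, applies \eqref{;ltr} to expand the Gaussian density, and integrates by parts to produce $K_{\pi i/t}(u+r)/(u+r)$, whereas you start from the right-hand side, unwind that same factor by the substitution $s=\cosh w$ (which is just the integration by parts read backwards), and then apply \eqref{;ltr} and \eqref{;hw}. The ingredients, the key identity $\int_{\R}e^{-c\cosh x}\sinh x\sin(\pi x/t)\,dx=\frac{2\pi}{tc}K_{\pi i/t}(c)$, and the Fubini bounds all match the paper's proof, so there is nothing to add.
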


\begin{proof}
 In the integral representation \eqref{;hw} for $\Theta _{r}(t)$, 
 the integrand is a symmetric function in $y\in \R $, and hence 
 we may represent $\Theta _{r}(t)$ as 
 \begin{align}\label{;exphw}
  \Theta _{r}(t)=
  \frac{r}{2\pi }\exp \left( 
  \frac{\pi ^{2}}{2t}
  \right) 
  \ex \!\left[ 
  \exp \left( -r\cosh B_{t}\right) \sinh B_{t}
  \sin \left( \frac{\pi }{t}B_{t}\right) 
  \right] . 
 \end{align}
 By noting the relation \eqref{;ltr} and using Fubini's theorem, 
 the expectation on the right-hand side may be written as 
 \begin{align*}
  \int _{0}^{\infty }\frac{du}{u}\,\Theta _{u}(t)
  \int _{\R }dx\,\exp \left\{ 
  -(u+r)\cosh x
  \right\} \sinh x\sin \left( \frac{\pi }{t}x\right) . 
 \end{align*}
 By applying the integration by parts formula, the integral 
 with respect to $x$ in the last expression is calculated as 
 \begin{align*}
  &\left[ 
  -\frac{1}{u+r}\exp \left\{ 
  -(u+r)\cosh x
  \right\} \sin \left( \frac{\pi }{t}x\right) 
  \right] _{x=-\infty }^{\infty }\\
  &\quad +\frac{\pi }{t(u+r)}
  \int _{\R }dx\,\exp \left\{ 
  -(u+r)\cosh x
  \right\} 
  \cos \left( \frac{\pi }{t}x\right) \\
  &=\frac{2\pi }{t(u+r)}K_{\pi i/t}(u+r)
 \end{align*}
 thanks to \eqref{;intk1i}. Hence the expectation on the right-hand side 
 of \eqref{;exphw} is equal to 
 \begin{align*}
  \frac{2\pi }{t}\int _{0}^{\infty }\frac{du}{u(u+r)}\,
  \Theta _{u}(t)K_{\pi i/t}(u+r), 
 \end{align*}
 which proves the relation \eqref{;epie} as desired. 
\end{proof}

\subsection{Derivation of \pref{;pjlt} from \eqref{;ltr}}\label{;expl}

Recall from \pref{;pjlt} the following identities between 
expectations relative to Brownian motion $B$: 
for every $\la \ge 0$ and $\xi \in \R $, 
\begin{equation}\label{;ajlt}
 \begin{split}
 &\ex \!\left[ 
 \exp \left( 
 -\la \eb{t}-\frac{\xi ^{2}}{2}A_{t}
 \right) 
 \right] \\
 &=
 \begin{cases}
  \ex \!\left[ 
  \exp \left( -\la \cosh B_{t}\right) 
  \cos \bigl( \sqrt{\xi ^{2}-\la ^{2}}\sinh B_{t}\bigr) 
  \right] & \text{if $\la \le |\xi |$,} \\[2mm]
  \ex \!\left[ 
  \exp \left( -\la \cosh B_{t}\right) 
  \cosh \bigl( \sqrt{\la ^{2}-\xi ^{2}}\sinh B_{t}\bigr) 
  \right] & \text{if $\la \ge |\xi |$.} 
 \end{cases}
 \end{split}
\end{equation}
As was seen in \eqref{;jltrepr}, the left-hand side admits the 
representation 
\begin{align}\label{;jltreprd}
 2\int _{0}^{\infty }\frac{dr}{r}\,
 K_{0}\bigl( \sqrt{(r+\la )^{2}+\xi ^{2}-\la ^{2}}\,\bigr) 
 \Theta _{r}(t), 
\end{align}
which was obtained by calculating the expectation 
\begin{align*}
 \ex \!\left[ 
 \exp \left( 
 -\la e^{z_{\mu }}-\frac{\xi ^{2}}{2}\frac{e^{z_{\mu }}}{\mu }
 \right) 
 \right] 
\end{align*}
for each $\mu >0$. In this part of the appendix, 
based on the relation \eqref{;ltr} and \pref{;phinge}, 
we derive the representation \eqref{;jltreprd} from expressions 
on the right-hand side of \eqref{;ajlt}, which we think would provide 
us with a better understanding of the identities \eqref{;ajlt}. 

We treat the case $\la \le |\xi |$ first. Note that by \eqref{;ltr} 
and Fubini's theorem, the right-hand side of \eqref{;ajlt} in this 
case is rewritten as 
\begin{align}\label{;arewr}
 \int _{0}^{\infty }\frac{dr}{r}\,\Theta _{r}(t)
 \int _{\R }dx\,\exp \left\{ 
 -(r+\la )\cosh x
 \right\} \cos \bigl( \sqrt{\xi ^{2}-\la ^{2}}\sinh x\bigr) . 
\end{align}
By \pref{;phinge}, the integral with respect to $x$ in 
the expression \eqref{;arewr} is equal to 
\begin{align*}
 \int _{0}^{\infty }\frac{dv}{v}\,\exp \left( 
 -\frac{1}{2v}
 \right) \exp \left\{ 
 -\frac{(r+\la )^{2}}{2}v
 \right\} 
 \ex \!\left[ 
 \cos \bigl\{ 
 \sqrt{\xi ^{2}-\la ^{2}}\beta (v)
 \bigr\} 
 \right] . 
\end{align*}
Since there holds the equality 
\begin{align*}
 \ex \!\left[ 
 \cos \bigl\{ 
 \sqrt{\xi ^{2}-\la ^{2}}\beta (v)
 \bigr\} 
 \right] 
 =\exp \left( 
 -\frac{\xi ^{2}-\la ^{2}}{2}v
 \right) , 
\end{align*}
the above integral with respect to $v$ is equal to 
\begin{align*}
 2K_{0}\bigl( \sqrt{(r +\la )^{2}+\xi ^{2}-\la ^{2}}\,\bigr) 
\end{align*}
by \eqref{;k0}, which proves that \eqref{;arewr} agrees with 
\eqref{;jltreprd}. 

By replacing 
$\cos \bigl( \sqrt{\xi ^{2}-\la ^{2}}\sinh x\bigr) $ 
in \eqref{;arewr} by 
$\cosh \bigl( \sqrt{\la ^{2}-\xi ^{2}}\sinh x\bigr) $, 
the case $\la \ge |\xi |$ is treated in a similar way owing to the fact that 
\begin{align*}
 \ex \!\left[ 
 \cosh \bigl\{ \sqrt{\la ^{2}-\xi ^{2}}\beta (v)\bigr\} 
 \right] 
 &=\ex \!\left[ 
 e^{\sqrt{\la ^{2}-\xi ^{2}}\beta (v)}
 \right] \\
 &=\exp \left( 
 \frac{\la ^{2}-\xi ^{2}}{2}v
 \right) , 
\end{align*}
where the first equality is due to symmetry of Brownian motion. 

\subsection{Explanation of \eqref{;hw} via \eqref{;ltr}}\label{;dhw}

In \cite{yor80} (see also \cite[Appendix~A]{mySI}), 
Yor obtained the integral representation \eqref{;hw} 
for $\Theta _{r}(t)$ by inverting its Laplace 
transform \eqref{;ltt} taken with respect to variable 
$t$. In the last part of this appendix, we explain 
\eqref{;hw} via \eqref{;ltr}, the Laplace transform 
with respect to variable $r$. 
 
We fix $t>0$ below. By the representation \eqref{;exphw} 
and Fubini's theorem, the left-hand side of the relation 
\eqref{;ltr} may be written as 
\begin{align*}
 \frac{1}{2\pi }\exp \left( 
 \frac{\pi ^{2}}{2t}
 \right) 
 \ex \!\left[ 
 \frac{
 \sinh B_{t}\sin \left( \pi B_{t}/t\right) 
 }
 {
 \cosh B_{t}+\cosh x
 }
 \right] ,\quad x\in \R . 
\end{align*}
Hence if we take the Fourier transform on both sides 
of \eqref{;ltr}, it reads 
\begin{align*}
 \frac{1}{2\pi }\exp \left( 
 \frac{\pi ^{2}}{2t}
 \right) 
 \int _{\R }dx\,\cos (\xi x)\ex \!\left[ 
 \frac{
 \sinh B_{t}\sin \left( \pi B_{t}/t\right) 
 }
 {
 \cosh B_{t}+\cosh x
 }
 \right] 
 =\exp \left( 
 -\frac{\xi ^{2}}{2}t
 \right) ,\quad \xi \in \R .
\end{align*}
Therefore thanks to the injectivity of Fourier and Laplace 
transforms, in order to verify the representation \eqref{;hw}, 
it suffices to show that for any $\xi \in \R $, 
\begin{align}\label{;aim}
 \int _{\R }dx\,\cos (\xi x)\ex \!\left[ 
 \frac{
 \sinh B_{t}\sin \left( \pi B_{t}/t\right) 
 }
 {
 \cosh B_{t}+\cosh x
 }
 \right] 
 =2\pi \exp \left( 
 -\frac{\pi ^{2}}{2t}-\frac{\xi ^{2}}{2}t
 \right) .
\end{align}

Let $\xi \neq 0$ for a while and note the fact that 
for any $b\in \R $ with $b\neq 0$, 
\begin{align}\label{;intrel}
 \int _{\R }dx\,
 \frac{\cos (\xi x)}
 {\cosh b+\cosh x}
 =\frac{2\pi \sin (\xi b)}
 {\sinh (\pi \xi )\sinh b}. 
\end{align}
Indeed, it is known (cf. \cite[Subsection~13.21, Equation~(9)]{wat}) 
that 
\begin{align}\label{;phw}
 \int _{0}^{\infty }du\,e^{-u\cosh b}K_{i\xi }(u)
 =\frac{\pi \sin (\xi b)}
 {\sinh (\pi \xi )\sinh b}, 
\end{align}
into the left-hand side of which we plug the integral 
representation \eqref{;intk1i} of $K_{i\xi }$ 
to obtain by Fubini's theorem, 
\begin{align*}
  \int _{0}^{\infty }du\,e^{-u\cosh b}K_{i\xi }(u)
  &=\frac{1}{2}\int _{\R }dx\,\cos (\xi x)
  \int _{0}^{\infty }du\,e^{-u(\cosh b+\cosh x)}\\
  &=\frac{1}{2}\int _{\R }dx\,
 \frac{\cos (\xi x)}
 {\cosh b+\cosh x}. 
\end{align*}
Using the above fact \eqref{;intrel} and Fubini's 
theorem, we then see that the left-hand side of 
\eqref{;aim} is equal to 
\begin{align*}
 &\frac{2\pi }{\sinh (\pi \xi )}
 \ex \!\left[ 
 \sinh B_{t}\sin \left( \frac{\pi }{t}B_{t}\right)
 \frac{\sin (\xi B_{t})}{\sinh B_{t}};\,B_{t}\neq 0 
 \right] \\
 &=\frac{\pi }{\sinh (\pi \xi )}
 \ex \!\left[ 
 \cos \left\{ 
 \left( 
 \frac{\pi }{t}-\xi 
 \right) B_{t}
 \right\} 
 -\cos \left\{ 
 \left( 
 \frac{\pi }{t}+\xi 
 \right) B_{t}
 \right\} 
 \right] \\
 &=\frac{\pi }{\sinh (\pi \xi )}
 \left[ 
 \exp \left\{ 
 -\frac{1}{2}\left( 
 \frac{\pi }{t}-\xi 
 \right) ^{2}t
 \right\} 
 -\exp \left\{ 
 -\frac{1}{2}\left( 
 \frac{\pi }{t}+\xi 
 \right) ^{2}t
 \right\} 
 \right] , 
\end{align*}
which agrees with the right-hand side of \eqref{;aim}. 
The validity of \eqref{;aim} in the case $\xi =0$ 
is now clear since both sides of \eqref{;aim} 
are continuous functions in $\xi $. 

If we consider the meromorphic function 
\begin{align*}
 f(z)=\frac{\cos (\xi z)}{\cosh b+\cosh z}
\end{align*} 
on $\C $, then by noting the fact that 
the poles $w$ of $f$ each of whose 
imaginary part $\mathrm{Im}\,w$ satisfies 
$0<\mathrm{Im}\,w<2\pi $ are $\pm b+\pi i$, 
the above formula \eqref{;intrel} may be verified 
by standard residue calculus along a rectangular 
contour circling $\pm b+\pi i$ and having 
its two sides on the two lines 
$\mathrm{Im}\,z=0$ and $\mathrm{Im}\,z=2\pi .$ 
We also note that the formula \eqref{;phw} may be seen from the 
following particular case of the Hankel--Lipschitz formulae 
(cf. \cite[p.~703]{gr}, \cite[Subsection~13.21]{wat}): 
for every $\nu \in \C $ whose real part is strictly greater than $-1$, 
\begin{align*}
 \int _{0}^{\infty }du\,e^{-u\cosh b}I_{\nu }(u)
 =\frac{e^{-\nu b}}{\sinh b},\quad b>0, 
\end{align*}
together with the definition of $K_{\nu }$ for a noninteger $\nu $ 
(see \cite[Section~5.7]{leb}): 
\begin{align*}
 K_{\nu }(b)=\frac{\pi }{2}\frac{I_{-\nu }(b)-I_{\nu }(b)}{\sin (\pi \nu )}. 
\end{align*}

\begin{rem}\label{;rfinal}
The same computation as in the above verification 
of the formula \eqref{;aim} also proves that when 
$t>0$ and $\xi \neq 0$, 
\begin{align}\label{;aimd}
 \int _{\R }dx\,\cos (\xi x)\ex \!\left[ 
 \frac{
 \sinh B_{t}\sin \left( \al B_{t}/t\right) 
 }
 {
 \cosh B_{t}+\cosh x
 }
 \right] 
 =2\pi \exp \left( 
 -\frac{\al ^{2}}{2t}-\frac{\xi ^{2}}{2}t
 \right) 
 \frac{\sinh (\al \xi )}{\sinh (\pi \xi )} 
\end{align}
for any $\al \in \R $. Fourier inversion of the right-hand side 
gives us several formulae for expectations as in the 
left-hand side of \eqref{;aimd} for different values of $\al $. 
When $\al =2\pi $ for example, since it is readily seen that 
for any $\xi \in \R $, 
\begin{align*}
 \int _{\R }dx\,\cos (\xi x)\frac{1}{\sqrt{2\pi t}}
 \exp \left( 
 -\frac{x^{2}}{2t}
 \right) \cos \left( 
 \frac{\pi }{t}x
 \right) 
 =\exp \left( 
 -\frac{\pi ^{2}}{2t}-\frac{\xi ^{2}}{2}t
 \right) \cosh (\pi \xi ), 
\end{align*}
we have 
\begin{align*}
 \ex \!\left[ 
 \frac{
 \sinh B_{t}\sin \left( 2\pi B_{t}/t\right) 
 }
 {
 \cosh B_{t}+\cosh x
 }
 \right] 
 =2\sqrt{\frac{2\pi }{t}}
 \exp \left( 
 -\frac{3\pi ^{2}}{2t}-\frac{x^{2}}{2t}
 \right) \cos \left( 
 \frac{\pi }{t}x
 \right) , \quad x\in \R , 
\end{align*}
which, after replacing $t$ and $x$ by 
$4t$ and $2x$, respectively, agrees with \cite[Lemma~3.1]{my2003}. 
In the case $\al =\pi /2$, the right-hand side of \eqref{;aimd} 
becomes 
\begin{align*}
 \pi \exp \left( 
 -\frac{\pi ^{2}}{8t}-\frac{\xi ^{2}}{2}t
 \right) \frac{1}{\cosh (\pi \xi /2)}, 
\end{align*}
and noting the fact that $1/\cosh (\pi \xi /2),\,\xi \in \R $, is 
the characteristic function of 
$\log |C|$ whose probability density is given by 
$1/(\pi \cosh x),\,x\in \R $ (cf. \cite[Chapter~0, Section~6]{ry}), 
we have the relation 
\begin{align*}
 \ex \!\left[ 
 \frac{
 \sinh B_{t}\sin \left( \frac{\pi }{2t}B_{t}\right) 
 }
 {
 \cosh B_{t}+\cosh x
 }
 \right] 
 =\exp \left( 
 -\frac{\pi ^{2}}{8t}
 \right) 
 \ex \!\left[ 
 \frac{1}{\cosh (x+B_{t})}
 \right] 
\end{align*}
for any $x\in \R $. By rewriting 
\begin{align*}
 \frac{1}{\cosh (x+B_{t})}
 =\frac{2(\cosh x\cosh B_{t}-\sinh x\sinh B_{t})}
 {\cosh (2B_{t})+\cosh (2x)}, 
\end{align*}
it also holds that 
\begin{align*}
 \ex \!\left[ 
 \frac{
 \sinh B_{t}\sin \left( \frac{\pi }{2t}B_{t}\right) 
 }
 {
 \cosh B_{t}+\cosh x
 }
 \right] 
 =2\exp \left( 
 -\frac{\pi ^{2}}{8t}
 \right) \cosh x\,
 \ex \!\left[ 
 \frac{\cosh B_{t}}{\cosh (2B_{t})+\cosh (2x)}
 \right] . 
\end{align*}
Finally, by dividing both sides of \eqref{;aimd} by $\al \neq 0$ 
and letting $\al \to 0$, it follows that 
\begin{align*}
 \frac{1}{t}\ex \!\left[ 
 \frac{
 B_{t}\sinh B_{t}
 }
 {
 \cosh B_{t}+\cosh x
 }
 \right] 
 =\ex \!\left[ 
 \frac{1}{\cosh (x+B_{t})+1}
 \right] ,\quad x\in \R , 
\end{align*}
because of the fact that $\pi \xi /\sinh (\pi \xi ),\,\xi \in \R $, is 
the Fourier transform of the probability density 
$
\bigl\{ 2(\cosh x+1)\bigr\} ^{-1},\,x\in \R 
$, which may be seen from \eqref{;intrel} by letting $b\to 0$ 
on both sides (see also \cite{ry} as cited in the case 
$\al =\pi /2$ above). 
\end{rem}
\bigskip 

\noindent 
{\bf Acknowledgements.} 
This work was partially supported by JSPS KAKENHI Grant Number 17K05288. 


\end{document}